\documentclass{m2an}
\usepackage{framed,multirow}
\usepackage{latexsym}
\usepackage{amssymb,amsmath,mathtools}
\usepackage{graphicx, version, color}
\usepackage{subfigure}
\usepackage{xcolor}
\usepackage{subcaption}
\usepackage{caption}
\usepackage{cases}
\usepackage{exscale}
\usepackage[colorlinks=true, citecolor=blue, linkcolor=blue]{hyperref}
\newtheorem{theorem}{Theorem}[section]
\newtheorem{lemma}[theorem]{Lemma}
\newtheorem{proposition}[theorem]{Proposition}
\theoremstyle{definition}

\theoremstyle{remark}
\newtheorem{remark}[theorem]{Remark}
\numberwithin{equation}{section}
\usepackage{xcolor}
\def\dx{\hspace{0mm}{\mathrm{d}\boldsymbol{x}}}

\def\ds{\hspace{0mm}{\mathrm{d}s}}
\newcommand{\tF}{{\rm F}}
\newcommand{\pt}{{\partial_t}}
\newcommand{\ps}{{\partial_s}}
\newcommand{\mNk}{\mathcal{N}_{\kappa}}
\newcommand{\mL}{\mathcal{L}}
\newcommand{\mI}{\mathcal{I}}
\newcommand{\mX}{\mathcal{X}}
\newcommand{\bx}{\boldsymbol{x}}

\newcommand{\od}{\mathrm{d}}
\newcommand{\odt}{\mathrm{d}t}
\newcommand{\ods}{\mathrm{d}s}
\newcommand{\no}{\nonumber}
\newcommand{\la}{\langle }
\newcommand{\ra}{\rangle}

\usepackage{epsfig}
\graphicspath{{fig/}}

\begin{document}
\title{A generalized matrix-valued Allen--Cahn model and its numerical solution}
\author{Yaru Liu}\address{School of Mathematical Sciences, University of Electronic Science and Technology of China, Sichuan, China}
\author{Chaoyu Quan}\address{School of Science and Engineering, The Chinese University of Hong Kong (Shenzhen), Shenzhen, Guangdong, China \& Shenzhen International Center for Industrial and Applied Mathematics, Shenzhen Research Institute of Big Data, Guangdong, China}
\author{Dong Wang}\address{School of Science and Engineering, The Chinese University of Hong Kong (Shenzhen), Shenzhen, Guangdong, China \& Shenzhen International Center for Industrial and Applied Mathematics, Shenzhen Research Institute of Big Data, Guangdong, China \& Shenzhen Loop Area Institute, Guangdong 518048, China}
\begin{abstract} 
This paper introduces a generalized matrix-valued Allen--Cahn model, where the unknown matrix-valued field belongs to $\mathbb{R}^{m_1\times m_2}$ with dimension $m_1\geq m_2$. By taking different values of $m_1$ and $m_2$, this model covers the classical scalar-valued, vector-valued, and square-matrix-valued Allen--Cahn equations. At the continuous level, the proposed model is proven to admit a unique solution satisfying the maximum bound principle (MBP) and the energy dissipation law. At the discrete level, a class of arbitrarily high-order exponential time differencing Runge--Kutta (ETDRK) schemes is investigated that preserve the MBP unconditionally. Moreover, we prove that the first- and second-order ETDRK schemes satisfy the discrete energy dissipation unconditionally, while third- and higher-order schemes preserve the discrete energy dissipation under suitable time-step constraints. The proof of the sharp convergence order in time is provided. Numerical experiments are carried out to confirm our theoretical results. \end{abstract}

%
\subjclass{35K57, 65M12, 35B50}
\keywords{Generalized matrix-valued Allen--Cahn model, maximum bound principle, energy dissipation, arbitrarily high-order rescaled ETDRK schemes.}
\maketitle
\section{Introduction}
Recently, matrix-valued Allen--Cahn models \cite{wang2019interface,osting2020diffusion} have attracted considerable attention to describe interface evolution in complex, multidimensional systems, and are closely related to diffuse-interface and matrix-valued field models arising in image processing problems \cite{batard2014covariant,rosman2014augmented}, crystal simulations \cite{elsey2013simple,elsey2014fast} and directional field synthesis problems \cite{vaxman2016directional}. They can be viewed as matrix-valued generalizations of the classical Allen--Cahn equation \cite{allen1979microscopic}, which serves as one of the fundamental diffuse-interface models for phase transitions and interface dynamics. Such matrix-valued extensions are essential for capturing higher-dimensional and more intricate interface behaviors.

In this paper, we propose a generalized matrix-valued Allen--Cahn model of the field $U(t,\bx)\in\mathbb{R}^{m_1\times m_2}$ with $m_1\geq m_2\geq 1$:
\begin{equation}\label{eq:MAC}
\begin{cases}
U_t(t,\bx)=\mathcal{L} U(t,\bx)+f(U(t,\bx)),\quad &(t,\bx)\in(0,T]\times\Omega,\\
U(0,\bx)=U_0(\bx),\quad &(t,\bx)\in\{t=0\}\times\overline{\Omega},
\end{cases}
\end{equation}
equipped with the periodic boundary condition. Here, $\mathcal{L}=\varepsilon^2\Delta$ is a diffusion operator scaled by $\varepsilon^2$, where $\Delta$ is the Laplacian operator, the parameter $\varepsilon>0$ represents the interfacial width, controlling the sharpness of the transition. $U_0$ is the initial condition, and $\Omega\subset\mathbb{R}^d$ ($d=1,2,3$) denotes an open, connected and bounded domain. The nonlinear term $f(U)$ is defined by
\begin{equation*}
f(U)=U-UU^\top U.
\end{equation*}
and $-f(U)$ is the variational derivative of $\int_\Omega \langle F(U), I_{m_2}\rangle_\tF$ with
\begin{equation*}
F(U)=\frac 14 (U^\top U-I_{m_2})^2,
\end{equation*}
and $I_{m_2}$ is the $m_2\times m_2$ identity matrix. The assumption $m_1\ge m_2$ guarantees that the target steady-state manifold $\mathcal{S}=\{U\in\mathbb R^{m_1\times m_2}:U^\top U=I_{m_2}\}$ is nonempty and the steady states are column-orthogonal. When $m_1<m_2$, one could consider $UU^\top=I_{m_1}$ as a restriction to ensure the row-orthogonality of steady states, where $I_{m_1}\in\mathbb{R}^{m_1\times m_1}$ is the identity matrix. In this paper, we restrict our analysis to the column-orthogonality case $m_1\geq m_2$.

The generalized matrix-valued model \eqref{eq:MAC} unifies the following three types of Allen--Cahn models that have been well studied.
\begin{itemize}
\item[(i)] {\em Scalar-valued case} ($m_1=m_2=1$): In this case, \eqref{eq:MAC} becomes $\partial_t u=\varepsilon^2\Delta u-F'(u)$, a  gradient flow of $E(u)=\int_\Omega \frac{\varepsilon^2}{2}|\nabla u|^2+F(u)\,d\bx$, approximating mean curvature flow in the sharp-interface limit. The classical Allen--Cahn equation \cite{allen1979microscopic} thus serves as a prototypical diffuse-interface model for curvature-driven interface motion and has been widely used in phase-field descriptions of two-phase incompressible flows \cite{yang2006numerical,gal2010longtime}.

\item[(ii)] {\em Vector-valued case} ($m_1\ge2,~m_2=1$): 
In \cite{rubinstein1989fast,rubinstein1989reaction}, Rubinstein et al. proposed a vector-valued reaction–diffusion system with Neumann boundary conditions for certain chemical reaction processes and, in this framework, studied the interface limits of the vector-valued Allen--Cahn equation (related with complex Ginzburg--Landau \cite{du1992analysis}) and its connections with geometric flows. Subsequently, Lin et al. \cite{lin2012reaction} systematically analyzed the static phase transition problem for this model in higher dimensions, while Liu\ \cite{Liu2025phase} extended these results to the dynamic case. Recently, multi-component conserved Allen--Cahn equations have been analyzed in \cite{grasselli2024multi}, where the sum of all components is prescribed to be a constant.

\item[(iii)] {\em Square-matrix-valued case} ($m_1=m_2\ge2$): The square-matrix-valued Allen--Cahn equation was first introduced by Osting and Wang in \cite{osting2020diffusion} to search for orthogonal matrix-valued harmonic mappings on periodic domains or closed surfaces. Subsequently, Wang et al. \cite{wang2019interface} systematically investigated the interfacial asymptotic dynamics of this equation from a multiple time-scale perspective. More recently, Fei et al. \cite{fei2023matrix} carried out a rigorous analysis of the sharp-interface limit of the matrix-valued Allen--Cahn equation.
\end{itemize}

At the continuous level, \eqref{eq:MAC} is the $L^2$ gradient flow of the following energy functional
\begin{equation}\label{eq:energy}
E(U)=\int_{\Omega}\Big(\frac{\varepsilon^2}{2}\|\nabla U\|_\tF^2+\la F(U),I_{m_2}\ra_\tF\Big)\dx,
\end{equation}
and satisfies the energy dissipation law (see Theorem \ref{theorem: energy dissipation of mac})
\begin{equation*}
\frac{\od}{\odt}E(U)=-\int_{\Omega}\|\pt U\|_\tF^2\dx\leq 0.
\end{equation*}
Here, $\|\cdot\|_\tF$ denotes the Frobenius norm and $\la\cdot,\cdot\ra_\tF$ represents the Frobenius inner product.
Furthermore, we prove that the proposed model \eqref{eq:MAC} admits a unique solution satisfying a maximum bound principle (MBP) (see Theorem \ref{thm:mbp}). That is to say, if the initial condition $U_0\in\mathbb{R}^{m_1\times m_2}$ of the generalized matrix-valued Allen--Cahn model satisfies
\begin{equation*}
\|U_0\|_\tF\leq \sqrt{m_2},
\end{equation*}
then the corresponding solution $U(t,\bx)$ fulfills
\begin{equation*}
\|U\|_\tF\leq \sqrt{m_2}.
\end{equation*}

At the discrete level, we focus on the structure preservation of time-discretizations for generalized matrix-valued Allen--Cahn equations. Note that some energy dissipative and MBP-preserving time-discretization schemes have been well studied for the square-matrix-valued Allen--Cahn equation, including the operator splitting schemes \cite{li2022stability2,quan2026unconditional}, the integrating factor Runge--Kutta schemes \cite{sun2023maximum}, and the exponential time differencing Runge--Kutta (ETDRK) schemes \cite{du2021maximum,li2021unconditionally,fu2022energy,liu2025maximum}. In particular, Liu et al. \cite{liu2025maximum} show that the maximum principle in Frobenius norm and the energy dissipation law of the first- and second-order ETDRK schemes are preserved unconditionally, even if the initial matrix field is nonsymmetric. Quan et al. \cite{quan2026unconditional} establish the unconditional energy dissipation of the second-order Strang splitting method for the square-matrix-valued Allen--Cahn equation. On higher-order methods, a class of arbitrarily high-order ETDRK schemes is proposed \cite{quan2025maximum}, preserving the MBP and the original energy dissipation law simultaneously for the scalar Allen--Cahn equation.

This paper first proves the well-posedness of the newly introduced matrix-valued Allen--Cahn model \eqref{eq:MAC} with $U\in\mathbb{R}^{m_1\times m_2}$, $m_1\geq m_2$, which covers the scalar-valued, vector-valued, and square-matrix-valued Allen--Cahn equations as special cases.
Furthermore, a rescaling technique \cite{quan2025maximum} is employed to construct arbitrarily high-order ETDRK schemes for the generalized matrix-valued Allen--Cahn equation, which preserve the MBP unconditionally and the original energy dissipation law under a suitable time-step constraint. In addition, a rigorous convergence analysis in time is provided.

The rest of this paper is organized as follows. Section~\ref{sec2} is devoted to the existence and uniqueness of solutions, the MBP, and the original energy dissipation law of the generalized matrix-valued Allen--Cahn equation. Section~\ref{sec3} introduces a class of arbitrarily high-order rescaled ETDRK schemes for the generalized matrix-valued Allen--Cahn equation and shows that they unconditionally preserve the MBP and satisfy the energy dissipation law under suitable time-step constraints, together with a detailed temporal convergence analysis. Section~\ref{sec4} presents numerical experiments that verify the theoretical results and demonstrate the effectiveness of the proposed schemes. Finally, some concluding remarks are given in Section~\ref{sec5}.

\section{Existence, uniqueness, maximum principle, and energy dissipation law of the solution}\label{sec2}
In this section, we investigate the existence and uniqueness of solutions to the generalized matrix-valued Allen--Cahn equation, as well as the MBP and energy dissipation law.

By introducing the linear elliptic operator $\mL_\kappa:=\varepsilon^2\Delta-\kappa\mI$ and nonlinear operator $\mNk[U]:=\kappa U+f(U)$, we can rewrite \eqref{eq:MAC} into the following form:
\begin{equation}\label{eq:MAC_ka}
U_t=\mL_{\kappa} U+\mNk[U], \quad (t,\bx)\in(0,T]\times\Omega,
\end{equation}
where $\mathcal I$ denotes the identity operator.

Then, we define a general Banach space $\mathcal{X}=C\left(\overline{\Omega};\mathbb{R}^{m_1\times m_2}\right)$ with $m_1 \geq m_2$, consisting of continuous $\mathbb{R}^{m_1\times m_2}$-valued functions on $\overline{\Omega}$, equipped with the supremum norm
\begin{equation*}
\|W\|_{\mathcal X}=\max _{\bx  \in \overline{\Omega}}\|W(\bx )\|_\tF, \quad \forall~ W \in \mathcal{X},
\end{equation*}
and introduce the closed subset
\begin{equation*}
\mathcal{X}_{m_2}=\{W(\bx )\in \mathcal X~|~ \|W(\bx )\|_{\mathcal{X}}\leq \sqrt{m_2}\}.
\end{equation*}

To prepare for the well-posedness and maximum principle analysis of \eqref{eq:MAC}, we first establish several basic stability properties of the nonlinear operator $\mNk$: $\mNk[U]$ is bounded and Lipschitz continuous on $\mathcal{X}_{m_2}$. 
Meanwhile, the semigroup $\{e^{t\varepsilon^2 \Delta}\}_{t\geq0}$ satisfies a contraction property on $\mathcal{X}$. 
These properties are collected in the lemmas below.

\begin{lemma}\label{Nkappa}
Let $m_1 \geq m_2$ be two positive integers. For any $U\in \mathbb{R}^{m_1\times m_2}$ with $\|U\|_\tF\leq \sqrt{m_2}$, if $\kappa\geq \max\{\frac 32 m_2-1,2\}$, then we have 
\begin{equation*}
\|\mNk[U]\|_\tF\leq\kappa\sqrt{m_2}.
\end{equation*}
Furthermore, for any $U_1,U_2\in \mathbb{R}^{m_1\times m_2}$ with $\|U_1\|_\tF,\|U_2\|_\tF\leq \sqrt{m_2}$, if $\kappa\geq 3m_2+1$, we have
\begin{equation*}
\|\mNk[U_1]-\mNk[U_2]\|_\tF\leq 2\kappa\|U_1-U_2\|_\tF.
\end{equation*}
\end{lemma}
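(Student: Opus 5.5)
We plan to diagonalize via the singular value decomposition. For the first estimate, write $U=P\Sigma Q^\top$ with $P\in\mathbb{R}^{m_1\times m_2}$ having orthonormal columns, $Q\in\mathbb{R}^{m_2\times m_2}$ orthogonal, and $\Sigma=\mathrm{diag}(\sigma_1,\dots,\sigma_{m_2})$ with $\sigma_i\ge0$ and $\sum_i\sigma_i^2=\|U\|_\tF^2\le m_2$. Since $UU^\top U=P\Sigma^3Q^\top$, we get
\begin{equation*}
\mNk[U]=P\,\mathrm{diag}\big((\kappa+1)\sigma_i-\sigma_i^3\big)Q^\top,
\qquad
\|\mNk[U]\|_\tF^2=\sum_{i=1}^{m_2}g(\sigma_i)^2,
\end{equation*}
with $g(s)=(\kappa+1)s-s^3$. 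So the task reduces to maximizing $\sum_i g(\sigma_i)^2$ over the ball $\sum_i\sigma_i^2\le m_2$, $\sigma_i\ge0$. Substitute $x_i=\sigma_i^2$ and set $h(x)=x(\kappa+1-x)^2$. We need $\sum_i h(x_i)\le\kappa^2m_2$ whenever $\sum_i x_i\le m_2$ and $x_i\in[0,m_2]$. The intended route is to show $h(x)\le\kappa^2x$ on $[0,m_2]$ and then sum. This inequality is equivalent to $(\kappa+1-x)^2\le\kappa^2$, i.e.\ $1\le x\le 2\kappa+1$ when $x>0$. It fails for $x<1$, so a pure pointwise bound is not enough.

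The main obstacle is therefore small $x$. Our first idea is a tangent-line (supporting-line) bound $h(x)\le\kappa^2+c(x-1)$ on $[0,m_2]$, where $c=h'(1)=\kappa^2-2\kappa$. Summing gives $\sum_i h(x_i)\le m_2\kappa^2+c\big(\sum_i x_i-m_2\big)\le m_2\kappa^2$, using $c\ge0$, which holds because $\kappa\ge2$. To validate the line, note that $h$ is a cubic with $h''(x)=6x-4(\kappa+1)$. Hence $h$ is concave on $[0,\tfrac23(\kappa+1)]$, and the tangent at $1$ dominates $h$ there. The condition $\kappa\ge\frac32m_2-1$ is exactly $m_2\le\frac23(\kappa+1)$, so $[0,m_2]$ lies entirely in this concave region. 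This also explains both hypotheses on $\kappa$.

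For the Lipschitz estimate, we will write
\begin{equation*}
\mNk[U_1]-\mNk[U_2]=(\kappa+1)(U_1-U_2)-\big(U_1U_1^\top U_1-U_2U_2^\top U_2\big),
\end{equation*}
and telescope
\begin{equation*}
U_1U_1^\top U_1-U_2U_2^\top U_2=(U_1-U_2)U_1^\top U_1+U_2(U_1-U_2)^\top U_1+U_2U_2^\top(U_1-U_2).
\end{equation*}
By submultiplicativity $\|AB\|_\tF\le\|A\|_\tF\|B\|_\tF$ (or the sharper spectral-norm version), each term is bounded by $m_2\|U_1-U_2\|_\tF$. Hence
\begin{equation*}
\|\mNk[U_1]-\mNk[U_2]\|_\tF\le(\kappa+1+3m_2)\|U_1-U_2\|_\tF\le2\kappa\|U_1-U_2\|_\tF,
\end{equation*}
where the last step uses $\kappa\ge3m_2+1$. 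This part is routine.
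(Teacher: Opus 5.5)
Your proof is correct and matches the paper's argument: the same SVD reduction to $\sum_i h(\sigma_i^2)$ with the cubic $h(x)=x(\kappa+1-x)^2$, concavity of $h$ on $[0,m_2]$ from $\kappa\ge\frac32m_2-1$, the nonnegative slope $h'(1)=\kappa^2-2\kappa$ from $\kappa\ge2$, and the identical telescoping for the Lipschitz bound. The only cosmetic difference is that you sum the supporting line at $x=1$ directly, whereas the paper applies Jensen's inequality and then uses that $h$ is increasing on $[0,1]$; these are the same reasoning, split into two steps in the paper.
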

\begin{proof}
For any $U\in\mathbb{R}^{m_1\times m_2}$, the singular value decomposition (SVD) of $U$ is written as $U=P\Sigma Q^\top$, where $P\in\mathbb{R}^{m_1\times m_1}$ and $Q\in\mathbb{R}^{m_2\times m_2}$ are orthogonal matrices, $\Sigma\in\mathbb{R}^{m_1\times m_2}$ is the diagonal matrix of singular values $\sigma_i$ with $i=1,\ldots,m_2$.
Thus, we have
\begin{equation*}
\begin{aligned}
\|\mNk[U]\|_\tF^2=&\|\kappa U+U-UU^\top U\|_\tF^2\\
=&\sum_{i=1}^{m_2}((\kappa+1)\sigma_i-\sigma_i^3)^2\\
=&\sum_{i=1}^{m_2}(\kappa+1)^2\sigma_i^2-2(\kappa+1)\sigma_i^4+\sigma_i^6.
\end{aligned}
\end{equation*}
Since $\|U\|_\tF\leq \sqrt{m_2}$, we have $\sum_{i=1}^{m_2}\sigma_i^2\leq m_2$ and $\frac{1}{m_2}\sum_{i=1}^{m_2}\sigma_i^2\leq 1$. Let $q:=\sum_{i=1}^{m_2}\sigma_i^2\in[0,m_2]$ and $g(q)=(\kappa+1)^2q-2(\kappa+1)q^2+q^3$. If $\kappa\geq \max\{\frac 32 m_2-1,2\}$, we can obtain
\begin{equation*}
g''(q)\leq 0,~\forall~q\in[0,m_2]~\text{and}~ g'(q)\geq 0,~\forall~q\in[0,1].
\end{equation*}
Then, we can use Jensen's inequality to obtain
\begin{equation*}
\sum_{i=1}^{m_2}\frac{1}{m_2}g(\sigma_i^2)\leq g\Big(\sum_{i=1}^{m_2}\frac{1}{m_2}\sigma_i^2\Big)\leq g(1)=\kappa^2.
\end{equation*}
Therefore $\|\mNk[U]\|_\tF\leq \kappa\sqrt{m_2}$, as claimed.

For $U_1,U_2\in\mathbb{R}^{m_1\times m_2}$ with $\|U_1\|_\tF,\|U_2\|_\tF\leq \sqrt{m_2}$, we have
\begin{equation*}
\begin{aligned}
&\|\mNk[U_1]-\mNk[U_2]\|_\tF\\
=&\|(\kappa+1)(U_1-U_2)-(U_1U_1^\top U_1-U_2U_2^\top U_2)\|_\tF\\
=&\|(\kappa+1)(U_1-U_2)-((U_1-U_2)U_1^\top U_1+U_2(U_1-U_2)^\top U_1+U_2U_2^\top (U_1-U_2))\|_\tF\\
\leq &(\kappa+1+\|U_1\|_\tF^2+\|U_2\|_\tF^2+\|U_1\|_\tF\|U_2\|_\tF)\|U_1-U_2\|_\tF\\
\leq&(\kappa+1+3m_2)\|U_1-U_2\|_\tF.
\end{aligned}
\end{equation*}
Therefore $\|\mNk[U_1]-\mNk[U_2]\|_\tF\leq 2\kappa\|U_1-U_2\|_\tF$ as $\kappa\geq 1+3m_2$.
\end{proof}

\begin{lemma}\label{lambda-Delta}
For any $U\in \mathcal X$, the solution operator of the Laplace equation with periodic boundary condition, i.e., $\{e^{t\varepsilon^2\Delta}\}_{t\geq0}$ on $\mathcal{X}$, satisfies the contraction property that $\forall~t\geq 0$, 
\begin{equation*}
\|e^{t\varepsilon^2 \Delta} U\|_{\mathcal X}\leq\|U\|_{\mathcal{X}}.
\end{equation*}
\end{lemma}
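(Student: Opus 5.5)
The plan is to represent the semigroup as convolution with the periodic heat kernel and use that this kernel is nonnegative with unit mass. For periodic $\Omega$ (a torus, or a rectangular box with periodic identification), the solution $V(t,\bx)=e^{t\varepsilon^2\Delta}U(\bx)$ of $V_t=\varepsilon^2\Delta V$, $V(0)=U$, acts componentwise on the matrix entries. It can be written as
\begin{equation*}
V(t,\bx)=\int_{\Omega} G_t(\bx-\boldsymbol{y})\,U(\boldsymbol{y})\,\mathrm{d}\boldsymbol{y},
\end{equation*}
where $G_t$ is the periodized Gaussian kernel with diffusivity $\varepsilon^2$. Concretely, $G_t(\bx)=\sum_{\boldsymbol{k}}\Phi_t(\bx+\boldsymbol{k})$, with $\Phi_t$ the free-space heat kernel and the sum taken over the period lattice. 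For $t>0$ it satisfies $G_t\ge 0$ and $\int_\Omega G_t\,\mathrm{d}\boldsymbol{y}=1$; the case $t=0$ is trivial.

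Given this representation, the estimate is Minkowski's integral inequality for the Frobenius norm, or equivalently the convexity of $\|\cdot\|_\tF$ together with Jensen's inequality for the probability measure $G_t(\bx-\boldsymbol{y})\,\mathrm{d}\boldsymbol{y}$:
\begin{equation*}
\|V(t,\bx)\|_\tF\le\int_\Omega G_t(\bx-\boldsymbol{y})\,\|U(\boldsymbol{y})\|_\tF\,\mathrm{d}\boldsymbol{y}\le \|U\|_{\mathcal X}.
\end{equation*}
Taking the maximum over $\bx\in\overline\Omega$ gives the claim. The main point to justify is positivity and unit mass of the periodic kernel. Positivity follows because it is a sum of positive Gaussians. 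Unit mass follows from unfolding the lattice sum into an integral over $\mathbb{R}^d$, or from the zero Fourier mode being preserved.

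As an alternative that avoids kernels, I would use a maximum principle argument on $w=\|V\|_\tF^2$. A direct computation gives $w_t-\varepsilon^2\Delta w=-2\varepsilon^2\|\nabla V\|_\tF^2\le 0$, so $w$ is a periodic subsolution of the heat equation. The scalar weak maximum principle on the compact periodic domain then yields $\max_{\bx} w(t,\bx)\le\max_{\bx} w(0,\bx)$. For merely continuous $U$ this route requires smoothness for $t>0$ and continuity at $t=0$, which is why I prefer the kernel argument as the primary proof.
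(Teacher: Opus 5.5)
Your proof is correct, but your primary route differs from the paper's; your fallback argument is essentially the paper's proof.

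The paper never uses a kernel. It reduces to $U\in C^2_{\rm per}$ by density, multiplies the heat equation by $W^\top$ and takes the trace to obtain $\tfrac12\partial_t\|W\|_\tF^2\le\tfrac12\varepsilon^2\Delta\|W\|_\tF^2$ (your identity $w_t-\varepsilon^2\Delta w=-2\varepsilon^2\|\nabla V\|_\tF^2$), and then applies the scalar maximum principle for the heat equation. Your main argument instead writes $e^{t\varepsilon^2\Delta}$ as convolution with the nonnegative, unit-mass periodic heat kernel and concludes by Minkowski's integral inequality.

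This buys you several things:
\begin{itemize}
\item No classical regularity, approximation or limiting step is needed. The bound holds pointwise for every bounded measurable datum.
\item You avoid the paper's density claim. That claim is only valid if $\mathcal X$ is read as the space of \emph{periodic} continuous functions, since a continuous function on $\overline\Omega$ whose traces on opposite faces differ cannot be uniformly approximated by periodic ones.
\item Only convexity of the norm is used. The same argument therefore gives contraction in any norm on $\mathbb{R}^{m_1\times m_2}$ (e.g.\ the spectral norm), and for any semigroup acting componentwise through a nonnegative kernel of unit mass.
\end{itemize}

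The paper's subsolution computation, in turn, needs no explicit kernel. It therefore transfers to settings where only a scalar maximum principle is available, such as variable-coefficient diffusion or other boundary conditions. The price is that it requires smooth solutions and a smooth convex function of $W$, such as $\|W\|_\tF^2$.

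The one step you should state explicitly is the identification of the semigroup with kernel convolution, e.g.\ by Poisson summation, comparing Fourier coefficients on the torus.
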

\begin{proof}
Since $C^2_{\rm{per}}\left(\overline{\Omega}; \mathbb{R}^{m_1\times m_2}\right)\coloneqq \{U\in C^2\left(\overline{\Omega}; \mathbb{R}^{m_1\times m_2}\right), U~\mbox{is periodic}\}$ is dense in $\mX$, it suffices to consider the case of $U\in C^2_{\mathrm{per}}$.

Let
\begin{equation}\label{eq:heat-solution}
W(t,\bx)=e^{t\varepsilon^2\Delta}U(\bx), \quad (t,\bx)\in(0,T]\times\Omega.
\end{equation}
be the classical solution to the heat equation with the periodic boundary condition:
\begin{equation}\label{eq:heat}
\begin{cases}
W_t(t,\bx)=\varepsilon^2\Delta W(t,\bx),\quad &(t,\bx)\in(0,T]\times\Omega,\\
W(0,\bx)=U(\bx),\quad &(t,\bx)\in\{t=0\}\times\overline{\Omega}.
\end{cases}
\end{equation}

Multiplying both sides of the first equation in \eqref{eq:heat} by $W^\top(t,\bx)$ and taking the trace, we have
\begin{align*}
\text{tr}(W^\top(t,\bx)\partial_t W(t,\bx))=\text{tr}(\varepsilon^2W^\top(t,\bx)\Delta W(t,\bx)),
\end{align*}
which yields
\begin{equation*}
\begin{aligned}
&\frac 12\partial_t\Big(\sum_{i=1}^{m_1}\sum_{j=1}^{m_2} W_{ij}^2(t,\bx)\Big)=\varepsilon^2\sum_{i=1}^{m_1}\sum_{j=1}^{m_2} W_{ij}(t,\bx)\Delta W_{ij}(t,\bx)\\
=&-\varepsilon^2\sum_{i=1}^{m_1}\sum_{j=1}^{m_2} |\nabla W_{ij}(t,\bx)|^2+\frac 12 \varepsilon^2\Delta\Big(\sum_{i=1}^{m_1}\sum_{j=1}^{m_2} W_{ij}^2(t,\bx )\Big)\\
\leq&\frac 12 \varepsilon^2\Delta\Big(\sum_{i=1}^{m_1}\sum_{j=1}^{m_2} W_{ij}^2(t,\bx)\Big).
\end{aligned}
\end{equation*}
So $\|W(t,\bx)\|_\tF$ is a subsolution of the heat operator. Taking  the supremum norm $\|\cdot\|_{\mathcal{X}}$ on both sides of  \eqref{eq:heat-solution} and using the maximum principle of the heat equation, we can obtain
\begin{equation*}
\|W(t)\|_\mX=\|e^{t\varepsilon^2\Delta}U\|_\mX\leq\|U\|_\mX, \quad t\in(0,T].
\end{equation*}
\end{proof}

\begin{theorem}[Existence, uniqueness and maximum principle]\label{thm:mbp}
Consider the generalized matrix-valued Allen--Cahn equation \eqref{eq:MAC} with the periodic boundary conditions and initial value $U_0\in \mathcal{X}_{m_2}$. Then the problem \eqref{eq:MAC} has a unique solution $U \in C([0,T];\mathcal{X}_{m_2})$, implying
\begin{equation*}
\|U(t,\bx)\|_\tF \leq \sqrt{m_2},
\end{equation*}
for all $(t,\bx)\in [0,T]\times\Omega$.
\end{theorem}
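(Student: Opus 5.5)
The plan is the standard Du--Ju--Li--Qiao framework for MBP-preserving semilinear parabolic equations, applied to the stabilized form \eqref{eq:MAC_ka} with $\kappa\geq 3m_2+1$. This choice also satisfies $\kappa\ge\max\{\frac32 m_2-1,2\}$, so both parts of Lemma \ref{Nkappa} are available. Write the mild formulation
\begin{equation*}
U(t)=e^{t\mL_\kappa}U_0+\int_0^t e^{(t-s)\mL_\kappa}\mNk[U(s)]\,\ds ,
\end{equation*}
where $e^{t\mL_\kappa}=e^{-\kappa t}e^{t\varepsilon^2\Delta}$. By Lemma \ref{lambda-Delta} this gives $\|e^{t\mL_\kappa}W\|_\mX\le e^{-\kappa t}\|W\|_\mX$.

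\textbf{Step 1 (local existence in $\mathcal X_{m_2}$).} For small $t_1>0$, define the map $\mathcal A$ on $\{W\in C([0,t_1];\mX_{m_2})\}$, equipped with the sup-in-time norm, by the right-hand side above. By the first part of Lemma \ref{Nkappa}, $\|\mNk[W(s)]\|_\mX\le\kappa\sqrt{m_2}$. Hence
\begin{equation*}
\|\mathcal A[W](t)\|_\mX\le e^{-\kappa t}\sqrt{m_2}+\int_0^t e^{-\kappa(t-s)}\kappa\sqrt{m_2}\,\ds=\sqrt{m_2},
\end{equation*}
so $\mathcal A$ maps the set into itself. The closedness of $\mX_{m_2}$ and the strong continuity of the heat semigroup on $C(\overline\Omega)$ with periodic conditions give continuity in time. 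By the Lipschitz bound in Lemma \ref{Nkappa},
\begin{equation*}
\|\mathcal A[W_1]-\mathcal A[W_2]\|\le 2\kappa t_1\|W_1-W_2\|,
\end{equation*}
so choosing $t_1<1/(2\kappa)$ makes $\mathcal A$ a contraction. Banach's fixed point theorem then yields a unique mild solution on $[0,t_1]$ with values in $\mX_{m_2}$.

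\textbf{Step 2 (global extension and uniqueness).} Since $t_1$ depends only on $\kappa$ and not on the data, and the solution at $t_1$ again lies in $\mX_{m_2}$, I restart the argument at $t_1,2t_1,\dots$ to cover $[0,T]$ in finitely many steps. This gives $U\in C([0,T];\mX_{m_2})$, hence $\|U(t,\bx)\|_\tF\le\sqrt{m_2}$ pointwise. For uniqueness, take another solution in $C([0,T];\mX_{m_2})$. Lemma \ref{Nkappa} and a Gronwall argument on $\|U_1(t)-U_2(t)\|_\mX\le 2\kappa\int_0^t\|U_1-U_2\|_\mX\,\ds$ show the difference vanishes.

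\textbf{Main obstacle.} The delicate point is the self-mapping bound, which hinges on the pointwise estimate $\|\mNk[U]\|_\tF\le\kappa\sqrt{m_2}$ from Lemma \ref{Nkappa}. That lemma gives the estimate for $\|U\|_\tF\le\sqrt{m_2}$, and I take it as given. A second issue is interpreting ``solution'': the construction yields a mild solution. If a classical solution is wanted, I would bootstrap regularity by parabolic smoothing of the heat semigroup applied to the smooth polynomial nonlinearity. Uniqueness among classical solutions that a priori leave $\mX_{m_2}$ would require a separate argument. There I would apply the subsolution computation from the proof of Lemma \ref{lambda-Delta} to $\|U\|_\tF^2$ together with $\la U,f(U)\ra_\tF=\|U\|_\tF^2-\|U^\top U\|_\tF^2\le \|U\|_\tF^2-\|U\|_\tF^4/m_2$, which is $\le 0$ when $\|U\|_\tF^2\ge m_2$. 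This comparison would show $\mX_{m_2}$ is invariant, and the statement as posed only requires uniqueness in $C([0,T];\mX_{m_2})$ anyway.
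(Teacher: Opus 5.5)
Your proposal is correct and follows essentially the same route as the paper. The paper likewise takes $\kappa\ge 3m_2+1$, writes the stabilized Duhamel formulation, and uses $\|\mNk[U]\|_\tF\le\kappa\sqrt{m_2}$ (Lemma~\ref{Nkappa}) with the contraction property of Lemma~\ref{lambda-Delta} to show the map preserves $C([0,t_1];\mX_{m_2})$; it then applies Banach's fixed point theorem via the Lipschitz bound and restarts on $[jt_1,(j+1)t_1]$. The paper's contraction constant is the slightly sharper $2(1-e^{-\kappa t_1})$, giving $t_1<\kappa^{-1}\ln 2$, where you have $2\kappa t_1$; your Gronwall uniqueness argument and the invariance remark via $\langle U,f(U)\rangle_\tF\le\|U\|_\tF^2-\|U\|_\tF^4/m_2$ are correct additions not in the paper.
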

\begin{proof} 
We choose $\kappa\geq 3m_2+1$, so that the boundedness and Lipschitz estimates in Lemma \ref{Nkappa} hold for $\mNk[U]$. Fix $t_1>0$. For any $V\in C([0,t_1];\mathcal{X}_{m_2})$, let $W:=W(t,\bx )$ be a solution of the following linear problem:
\begin{equation}\label{eq:thm3.2.1}
\begin{aligned}
\begin{cases}
W_t+\kappa W=\mathcal{L} W+\mNk[V],&\quad t\in(0,t_1],~\bx\in\Omega,\\
W(\bx,0)=U_0(\bx),&\quad \bx\in\overline{\Omega},
\end{cases}
\end{aligned}
\end{equation}
with the periodic boundary condition. Setting $\mNk[V]=0$ and $U_0=0$ in \eqref{eq:thm3.2.1} will lead to $W(t)=e^{t(\mathcal{L}-\kappa)}U_0=0$ for each $t\in[0,t_1]$. Thus, $W$ is the unique solution defined on $[0,t_1]\times\Omega$. By the Duhamel principle, 
\begin{equation}\label{eq:solution}
\begin{aligned}
W(t,\bx )=e^{t(\mathcal{L}-\kappa)}U_0(\bx )+\int_{0}^{t}e^{(t-s) 
(\mathcal{L}-\kappa)}\mNk[V(s,\bx )]\ds,\quad \bx \in \Omega, t\in(0,t_1].
\end{aligned}
\end{equation}
Taking  the supremum norm $\|\cdot\|_{\mathcal{X}}$ on both sides of  \eqref{eq:solution} and using Lemmas \ref{Nkappa} and \ref{lambda-Delta}, we obtain
\begin{equation*}
\begin{aligned}
\|W(t)\|_{\mathcal{X}}&\leq e^{-\kappa t}\|U_0\|_{\mathcal{X}}+\int_{0}^{t}e^{-\kappa(t-s)}\|\mNk[V(s)]\|_{\mathcal{X}}\ds\\
&\leq e^{-\kappa t}\sqrt{m_2}+\int_{0}^{t}e^{-\kappa(t-s)}\kappa \sqrt{m_2}\ds\\
&=\sqrt{m_2},
\end{aligned}
\end{equation*}
for $t\in(0,t_1]$.

Next, we define a mapping $\mathcal{A}: C([0,t_1];\mathcal{X}_{m_2})\to C([0,t_1];\mathcal{X}_{m_2})$, $V\mapsto W$ according to \eqref{eq:solution}. For any $V_1,V_2\in C([0,t_1];\mathcal{X}_{m_2})$, we set $W_1=\mathcal{A}V_1$ and $W_2=\mathcal{A}V_2$. From \eqref{eq:solution} and Lemma \ref{Nkappa}, we have
\begin{equation*}
\begin{aligned}
\|W_1(t)-W_2(t)\|_{\mathcal{X}}
\leq&\int_{0}^{t}e^{-\kappa(t-s)}\|\mNk[V_1(s)]-\mNk[V_2(s)]\|_{\mathcal{X}}\ds\\
\leq& 2\kappa\int_{0}^{t}e^{-\kappa(t-s)}\| V_1(s)-V_2(s)\|_{\mathcal{X}}\ds\\
\leq& 2\kappa\left(\frac{1}{\kappa}-\frac{1}{\kappa}e^{-\kappa t_1}\right)\| V_1-V_2\|_{\mathcal{X}},\quad \forall~t\in[0,t_1].
\end{aligned}
\end{equation*}
Thus, for $t_1<\kappa^{-1}\ln2$ such that $2\kappa\left(\frac{1}{\kappa}-\frac{1}{\kappa}e^{-\kappa t_1}\right)<1$, $\mathcal{A}$ is a contraction. Since $\mathcal{X}_{m_2}$ is closed in $\mathcal{X}$, we know that $C([0,t_1];\mathcal{X}_{m_2})$ is complete with respect to the metric induced by the norm $\|\cdot\|_\mathcal{X}$. Then, by Banach's fixed point theorem, we get that $\mathcal{A}$ has a unique fixed point in $C([0,t_1];\mathcal{X}_{m_2})$, which is the unique solution to the generalized matrix-valued Allen--Cahn equation (\ref{eq:MAC_ka}) on $[0,t_1]$. 

Then, we extend this solution to $[0, T]$. The above contraction argument gives a unique solution on $[0,t_1]$ and $U(t_1)\in\mathcal X_{m_2}$. Therefore, we may restart the same fixed-point argument on $[t_1,2t_1]$ with initial value $U(t_1)$. Since the contraction constant depends only on $\kappa$ and $t_1$, the same argument applies on $[jt_1,(j+1)t_1]$ for $j=1,\ldots,J$. After finitely many steps, with $Jt_1\geq T$, these local solutions cover $[0,T]$. The uniqueness of each subinterval ensures that the solutions obtained in adjacent intervals coincide at the endpoints and thus patch together into a unique solution in $C([0,T];\mathcal X_{m_2})$.
\end{proof}

The energy dissipation law is a fundamental property of $L^2$ gradient flows, indicating that the energy of any sufficiently smooth solution to this equation decreases monotonically over time. For scalar cases, this conclusion can be directly proven by the chain rule of total derivatives. We extend this line of reasoning to the matrix-valued $L^2$ gradient flows \eqref{eq:energy} and establish the following energy dissipation law.
\begin{proposition}[Energy dissipation law]\label{theorem: energy dissipation of mac}
Let $U(t,\bx)\in\mathbb{R}^{m_1\times m_2}$ be a sufficiently smooth solution of
\eqref{eq:MAC} on $(0,T]\times\Omega$ with periodic boundary conditions.
Then the energy functional \eqref{eq:energy} satisfies
\begin{equation*}
\frac{\od}{\odt}E(U)\leq 0,\qquad t\in(0,T].
\end{equation*}
\end{proposition}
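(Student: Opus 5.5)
We will differentiate $E(U(t))$ in time under the integral sign, which is legitimate for a sufficiently smooth solution on the compact periodic domain, and treat the two parts of the energy \eqref{eq:energy} separately. The goal is to show
\begin{equation*}
\frac{\od}{\odt}E(U)=\int_\Omega\la -\mathcal{L}U-f(U),\,\pt U\ra_\tF\dx ,
\end{equation*}
because then the equation \eqref{eq:MAC} turns the right-hand side into $-\int_\Omega\|\pt U\|_\tF^2\dx\leq 0$.

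\textbf{Gradient term.} We write $\frac{\varepsilon^2}{2}\|\nabla U\|_\tF^2=\frac{\varepsilon^2}{2}\sum_{i,j}|\nabla U_{ij}|^2$. Its time derivative is $\varepsilon^2\sum_{i,j}\nabla U_{ij}\cdot\nabla\pt U_{ij}$. Integrating by parts componentwise, with the boundary terms cancelling by periodicity, gives
\begin{equation*}
-\varepsilon^2\int_\Omega\sum_{i,j}\Delta U_{ij}\,\pt U_{ij}\dx=-\int_\Omega\la\mathcal{L}U,\pt U\ra_\tF\dx .
\end{equation*}

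\textbf{Potential term.} This is the only genuinely matrix-valued step. Set $A=U^\top U-I_{m_2}$, which is symmetric. Then $\la F(U),I_{m_2}\ra_\tF=\frac14\mathrm{tr}(A^2)$, and $\pt A=\pt U^\top U+U^\top\pt U$. Using the cyclicity of the trace and the symmetry of $A$,
\begin{equation*}
\pt\,\tfrac14\mathrm{tr}(A^2)=\tfrac12\mathrm{tr}(A\,\pt A)=\tfrac12\big(\mathrm{tr}(A\,\pt U^\top U)+\mathrm{tr}(AU^\top\pt U)\big)=\mathrm{tr}\big((UA)^\top\pt U\big),
\end{equation*}
where the first summand is converted using $\mathrm{tr}(A\,\pt U^\top U)=\mathrm{tr}(UA\,\pt U^\top)=\mathrm{tr}((UA)^\top\pt U)$. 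Since $UA=UU^\top U-U=-f(U)$, this equals $\la -f(U),\pt U\ra_\tF$, consistent with the paper's statement that $-f$ is the variational derivative of $\int_\Omega\la F(U),I_{m_2}\ra_\tF$.

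\textbf{Conclusion.} Adding the two contributions and substituting $\mathcal{L}U+f(U)=\pt U$ from \eqref{eq:MAC} gives
\begin{equation*}
\frac{\od}{\odt}E(U)=-\int_\Omega\|\pt U\|_\tF^2\dx\leq0 .
\end{equation*}
The main obstacle is the trace identity for the potential term, in particular handling the transpose correctly when $m_1\neq m_2$. The key points are that $A$ is $m_2\times m_2$ and symmetric, so the two summands coincide after cycling. Everything else is routine, given the smoothness and periodicity assumptions.
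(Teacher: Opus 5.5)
Your proposal is correct and follows the paper's route: differentiate $E$, integrate the gradient term by parts using periodicity, and substitute $\pt U=\varepsilon^2\Delta U+f(U)$ to obtain $-\int_\Omega\|\pt U\|_\tF^2\dx\le 0$. Your trace computation shows that $\pt\la F(U),I_{m_2}\ra_\tF=\la -f(U),\pt U\ra_\tF$, using the symmetry of $U^\top U-I_{m_2}$. This is a valid filling-in of a step the paper leaves implicit.
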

\begin{proof}
Differentiating \eqref{eq:energy} along a smooth solution and using integration by parts under the periodic boundary condition, we obtain
\begin{equation*}
\frac{\od}{\odt}E(U)=-\int_{\Omega}\left\langle \varepsilon^2\Delta U+f(U),\partial_t U\right\rangle_\tF\dx.
\end{equation*}
Since \eqref{eq:MAC} gives $\partial_t U=\varepsilon^2\Delta U+f(U)$, it follows that
\begin{equation*}
\frac{\od}{\odt}E(U)=-\int_{\Omega}\|\partial_t U\|_\tF^2\dx\leq0.
\end{equation*}
\end{proof}

\section{High-order rescaled ETDRK schemes}\label{sec3}
In this section, we develop arbitrarily high-order rescaled ETDRK schemes for the generalized matrix-valued Allen--Cahn equation that unconditionally preserve the MBP. Furthermore, we prove that the first- and second-order rescaled ETDRK schemes unconditionally satisfy a discrete energy dissipation law, while third- and higher-order schemes also obey a discrete energy dissipation law under suitable time-step constraints and achieve optimal error convergence in time.

The entire time interval $[0, T]$ is uniformly divided into $N$ sub-intervals, where $N$ is an arbitrary positive integer. Let the time step be $\tau=\frac TN$ and define $t_n=n\tau$ for $n=0,\ldots,N$. To establish the arbitrarily high-order schemes for solving \eqref{eq:MAC}, we focus on the equivalent equation \eqref{eq:MAC_ka} on the time interval $[t_n,t_{n+1}]$. Let $U^n(\bx)$ denote the numerical approximation to $U(t_n,\bx)$ at $t_n$. Given $U^n$, we define $W^n(s,\bx)$ as the exact solution of the following local initial value problem on $s\in[0,\tau]$:
\begin{equation}\label{eq:MAC-W}
\begin{aligned}
\begin{cases}
\ps W^n(s,\bx)=\mL_\kappa W^n(s,\bx)+\mNk[W^n(s,\bx)],&\quad (s,\bx)\in (0,\tau]\times\Omega,\\
W^n(0,\bx )=U^n(\bx ),&\quad \bx\in \overline{\Omega},
\end{cases}
\end{aligned}
\end{equation}
equipped with the periodic boundary condition. By Duhamel's principle, the solution of \eqref{eq:MAC-W} is given by
\begin{equation}\label{eq:mac-solution}
W^n(\tau,\bx )=e^{\tau\mL_\kappa}W^n(0,\bx)+\int_{0}^{\tau}e^{(\tau-s)\mL_\kappa}\mNk[W^n(s,\bx)]\ds.
\end{equation}

Then, we consider approximating the nonlinear terms in the integral to obtain the ETDRK$r$ scheme. For instance, by replacing $W^n(0,\bx )$ with the numerical solution $U^n$ and setting $\mNk[W^n(s,\bx)]\approx \mNk[U^n]$ in \eqref{eq:mac-solution} (here, it is assumed implicitly that the solution is sufficiently smooth in time), the ETDRK$1$ scheme reads
\begin{equation}\label{eq:ETDRK1}
\begin{aligned}
U^{n+1}=W^n(\tau)=e^{\tau\mL_\kappa}U^n+\int_{0}^{\tau}e^{(\tau-s)\mL_\kappa}\mNk[U^n]\ds,
\end{aligned}
\end{equation}
Based on the idea of the ETDRK1 scheme, higher-order ETDRK schemes can be derived by approximating $\mNk[W^n(s,\bx)]$ with higher-degree interpolation polynomials for $s\in[0,\tau]$. For any positive integer $r$, we can choose $r+1$ nodes $\{0=a_{r,0}<a_{r,1}<\ldots<a_{r,r}=1\}$ in the interval $[0,1]$ to construct, for each $\bx\in\Omega$, an $r$-th degree interpolating polynomial $P_r^n(s,\bx)$ such that $P_r^n(a_{r,k}\tau,\bx)=\mNk[W_r^n(a_{r,k}\tau,\bx)]$ for $1\leq k\leq r$, where $W_r^n(a_{r,k}\tau,\bx)$ are computed by the ETDRK$r$ scheme at nodes $\{a_{r,k}\}_{k=1}^r$ and $W_{r}^n(0,\bx)=U^n(\bx)$. Then, we can use $P_r^n(s,\bx)$ to approximate $\mNk[W^n(s,\bx)]$, thereby obtaining the $(r+1)$th-order ETDRK scheme. To facilitate the discussion in the following sections, we rewrite $P_r^n(s,\bx)$ in the following form:
\begin{equation}\label{eq:poly_P}
P_r^n(s,\bx)=\mNk[U^n](\bx)+C_{r,1}^n(\bx)\frac{s}{\tau}+C_{r,2}^n(\bx)\Big(\frac{s}{\tau}\Big)^2+\ldots+C_{r,r}^n(\bx)\Big(\frac{s}{\tau}\Big)^r,
\end{equation}
where the coefficients $\mathbf{C}_r^n(\bx)=[C_{r,1}^n(\bx);\cdots;C_{r,r}^n(\bx)]_{k=1}^r\in\mathbb{R}^{rm_1\times m_2}$ with $C_{r,k}^n(\bx)\in \mathbb{R}^{m_1\times m_2}$ for $k=1,\ldots,r$ are determined by
\begin{equation}\label{matrix:C}
\mathbf{C}_r^n(\bx)=\left(V_r^{-1}\otimes I_{m_1}\right)D_r^n(\bx),
\end{equation}
and
\begin{equation}\label{eq:matrix-v-d}
V_r=\begin{pmatrix}
a_{r,1}&(a_{r,1})^2&\ldots&(a_{r,1})^r\\
a_{r,2}&(a_{r,2})^2&\ldots&(a_{r,2})^r\\
\vdots&\vdots&\ddots&\vdots\\
a_{r,r}&(a_{r,r})^2&\ldots&(a_{r,r})^r
\end{pmatrix},
\quad
D_r^n(\bx)=\begin{pmatrix}
\mNk[W_r^n(a_{r,1}\tau)]-\mNk[U^n]\\
\mNk[W_r^n(a_{r,2}\tau)]-\mNk[U^n]\\
\vdots\\
\mNk[W_r^n(a_{r,r}\tau)]-\mNk[U^n]
\end{pmatrix}. 
\end{equation}
Moreover, the interpolation polynomial $P_r^n(s,\bx)$ can be expressed as
\begin{equation*}
P_r^n(s,\bx)=\sum_{k=0}^r\ell_{r,k}(s)\mNk[W_r^n(s_k,\bx)],
\end{equation*}
where $\ell_{r,k}(s)$ is the Lagrange basis function associated with the uniform interpolation node $s_k=k\tau/r$ for $k=0,\cdots,r$. In the following content, for the sake of readability, the spatial variable $\bx$ is sometimes omitted when its meaning is clear.

Similar to the proof in \cite{liu2025maximum}, we can obtain that the ETDRK$1$ and ETDRK$2$ schemes preserve the MBP, which implies that, for $r=0,1$, the interpolant $P_r^n(s,\bx)$ satisfies $\|P_r^n(s,\bx)\|_F\le\kappa\sqrt{m_2}$
for all $s\in[0,\tau]$ and $\bx\in\Omega$. However, for higher-order ETDRK schemes, we cannot ensure that the polynomial $P_{r}^n(s,\bx)$ used to approximate the nonlinear terms in \eqref{eq:mac-solution} satisfies $\|P_{r}^n(s,\bx)\|_\tF\leq \kappa\sqrt{m_2}$ for any $s\in[0,\tau]$ and $\bx\in \Omega$. Therefore, to establish an arbitrary higher-order ETDRK scheme that preserves the maximum principle, we define a scaling factor that depends on $\bx$:
\begin{equation*}
\alpha_{r}^n(\bx):=\min\left\{\frac{\kappa\sqrt{m_2}}{\max\limits_{s\in[0,\tau]}\|P_r^n(s,\bx)\|_\tF},1\right\}.
\end{equation*}
Let
\begin{equation}\label{widetilde-P}
\widetilde{P}_{r}^n(s,\bx)=\alpha_{r}^n(\bx) P_{r}^n(s,\bx).
\end{equation}
As a consequence, $\widetilde{P}_{r}^n(s,\bx)$ satisfies 
\begin{equation*}
\|\widetilde{P}_{r}^n(s,\bx)\|_\tF\leq \kappa \sqrt{m_2},
\end{equation*}
for $(s,\bx)\in (0,\tau]\times\Omega$.
We emphasize that the scaling factor is introduced only to enforce the discrete maximum bound and does not change the temporal order of the scheme. In the local consistency setting, $P_r^n(s,\bx)$ approximates $\mNk[U(t_n+s,\bx)]$ with an error of order $\mathcal O(\tau^{r+1})$. Since $\|\mNk[U(t_n+s,\bx)]\|_{\rm F}\leq \kappa\sqrt{m_2}$, the excess of $P_r^n$ over this bound is also of order $\mathcal O(\tau^{r+1})$. Hence, when the rescaling is active, $1-\alpha_r^n(\bx)=\mathcal O(\tau^{r+1})$, and the correction $\widetilde P_r^n-P_r^n$ is of sufficiently high order. A rigorous estimate that includes the numerical stage errors will be given later in Lemma \ref{lem:P-P} and will be used in the convergence proof.

Taking $\mNk[U(t_n+s,\bx)]\approx \widetilde{P}_{r}^n(s,\bx)$ in \eqref{eq:mac-solution} gives the rescaled ETDRK$(r+1)$ method:
\begin{equation}\label{eq:RETDRKr}
\begin{aligned}
U^{n+1}=&W_{r+1}^n=e^{\tau\mL_\kappa}U^n+\int_{0}^{\tau}e^{(\tau-s)\mL_\kappa}\widetilde{P}_{r}^n(s)\ds\\
=&e^{\tau\mL_\kappa}U^n+\int_{0}^{\tau}e^{(\tau-s)\mL_\kappa}\left(\alpha_{r}^n P_{r}^n(s)\right)\ds\\
=&e^{\tau\mL_\kappa}U^n+\tau\phi_1(\tau\mL_\kappa)\left(\alpha_r^n\mNk[U^n]\right)+\tau\sum_{k=1}^rk!\phi_{k+1}(\tau \mL_{\kappa})\left(\alpha_r^nC_{r,k}^n\right)\\
=&W_1^n(\tau)+\tau\phi_1(\tau\mL_\kappa)\left((\alpha_r^n-1)\mNk[U^n]\right)+\tau\sum_{k=1}^rk!\phi_{k+1}(\tau \mL_{\kappa})\left(\alpha_r^nC_{r,k}^n\right),
\end{aligned}
\end{equation}
where $W_1^n=e^{\tau \mL_\kappa}U^n+(e^{\tau \mL_\kappa}-\mI)\mL_\kappa^{-1}\mNk[U^n]$ is the numerical solution obtained by the ETDRK1 method approximating the exact solution of \eqref{eq:MAC} at $t=t_{n+1}$; $U^n$ is the numerical solution obtained by the rescaled ETDRK$(r+1)$ method approximating the exact solution of \eqref{eq:MAC} at $t=t_n$; $\phi_{k+1}(z):=\Big(e^z-\sum_{j=0}^k \frac{z^j}{j!}\Big)z^{-(k+1)}$ and $0\leq k\leq r$ is an integer.

\subsection{Maximum bound principles}
\begin{theorem}[Discrete MBP]\label{thm:MBP}
If $\|U_0\|_\tF\leq \sqrt{m_2}$ and $\kappa\geq \max\{\frac 32 m_2-1,2\}$, the rescaled ETDRK$r$ scheme \eqref{eq:RETDRKr} preserves the discrete MBP unconditionally, i.e., for any time step size $\tau>0$ and $n\geq 1$, the numerical solution $U^n$ produced by the rescaled ETDRK$r$ scheme satisfies
\begin{equation*}
\|U^n\|_\tF\leq \sqrt{m_2},
\end{equation*}
{for all $\bx\in\Omega$.}
\end{theorem}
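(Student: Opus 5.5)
We will argue by induction on $n$, proving the stronger statement that \emph{every} quantity produced within a step stays in the ball. That is, all internal stages $W_r^n(a_{r,k}\tau,\bx)$ and the new iterate $U^{n+1}(\bx)$ have Frobenius norm at most $\sqrt{m_2}$. The base case is the hypothesis $\|U_0\|_\tF\le\sqrt{m_2}$. For the induction step, suppose $\|U^n\|_{\mX}\le\sqrt{m_2}$. Since the stages $W_r^n(a_{r,k}\tau)$ are themselves generated by lower-order rescaled ETDRK formulas with step $a_{r,k}\tau$, we run an inner induction on the order $r$ with an arbitrary step size. Every lower-order scheme then has the same structure as \eqref{eq:RETDRKr}, and it suffices to prove one uniform estimate for that structure.

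The core estimate is as follows. Suppose $\|U^n\|_{\mX}\le\sqrt{m_2}$ and $G(s,\bx)$ is any function with $\|G(s,\bx)\|_\tF\le\kappa\sqrt{m_2}$ for all $s\in[0,\sigma]$ and $\bx$. We claim that
\begin{equation*}
V(\sigma)=e^{\sigma\mL_\kappa}U^n+\int_0^\sigma e^{(\sigma-s)\mL_\kappa}G(s)\,\ds
\end{equation*}
satisfies $\|V(\sigma)\|_{\mX}\le\sqrt{m_2}$. To see this, write $e^{t\mL_\kappa}=e^{-\kappa t}e^{t\varepsilon^2\Delta}$ and apply Lemma \ref{lambda-Delta}, exactly as in the proof of Theorem \ref{thm:mbp}:
\begin{equation*}
\|V(\sigma)\|_{\mX}\le e^{-\kappa\sigma}\sqrt{m_2}+\int_0^\sigma e^{-\kappa(\sigma-s)}\kappa\sqrt{m_2}\,\ds=\sqrt{m_2}.
\end{equation*}
One point needs care: $\widetilde P_r^n$ only has a pointwise-in-$\bx$ bound, so we take sup norms inside the integral before applying the contraction. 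This is legitimate because $\|\cdot\|_{\mX}$ of an integral is bounded by the integral of the $\|\cdot\|_{\mX}$ norms.

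Next we feed the nonlinear input into this estimate. For $r=0$ (ETDRK1), $G=\mNk[U^n]$, and Lemma \ref{Nkappa}, which requires $\kappa\ge\max\{\frac32m_2-1,2\}$, gives $\|G\|_\tF\le\kappa\sqrt{m_2}$ pointwise because $\|U^n(\bx)\|_\tF\le\sqrt{m_2}$. For $r\ge1$, $G=\widetilde P_r^n=\alpha_r^nP_r^n$, and by the definition of $\alpha_r^n(\bx)$ we have $\|\widetilde P_r^n(s,\bx)\|_\tF\le\kappa\sqrt{m_2}$ for every $s$ and $\bx$, regardless of the stage values. When $\max_s\|P_r^n\|_\tF=0$ we set $\alpha=1$ and the bound is trivial. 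So the bound on the final update needs no information about the stages. The stages still matter, however, because they must be well defined and must keep the recursion inside $\mX_{m_2}$. This is exactly what the inner induction on $r$ supplies: each $W_r^n(a_{r,k}\tau)$ is of the above form with $\sigma=a_{r,k}\tau$ and a rescaled interpolant, and is therefore bounded by $\sqrt{m_2}$.

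The main obstacle is a bookkeeping issue rather than an analytic one. We must make precise that the intermediate stages are computed by rescaled lower-order schemes, so that the inner induction closes. We also need continuity in $\bx$ of $\alpha_r^n$ to ensure $\widetilde P_r^n\in\mX$. We would handle the latter by noting that $\bx\mapsto\max_s\|P_r^n(s,\bx)\|_\tF$ is continuous and that $\min\{\kappa\sqrt{m_2}/\cdot,1\}$ is continuous on $[0,\infty)$, extended by $1$ at $0$. With these points settled, the induction gives $\|U^n\|_\tF\le\sqrt{m_2}$ for all $n$ and every $\tau>0$.
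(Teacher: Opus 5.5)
Your proof is correct and follows the paper's argument. It uses induction on $n$, the pointwise bound $\|\widetilde P_{r-1}^n(s,\bx)\|_\tF\le\kappa\sqrt{m_2}$ that comes directly from the definition of $\alpha_{r-1}^n$, and then the Duhamel formula with $e^{t\mL_\kappa}=e^{-\kappa t}e^{t\varepsilon^2\Delta}$ and Lemma~\ref{lambda-Delta}, giving $e^{-\kappa\tau}\sqrt{m_2}+(1-e^{-\kappa\tau})\sqrt{m_2}=\sqrt{m_2}$. Your extra bookkeeping is not needed for the final bound, but it fills in details the paper leaves implicit: the inner induction keeping the stages in the ball, continuity of $\alpha_r^n$ in $\bx$, and locating the $\kappa$ hypothesis in Lemma~\ref{Nkappa} for the unrescaled ETDRK1 input.
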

\begin{proof}
Suppose that $\|U^n\|_\tF\leq\sqrt{m_2}$ holds for some $n\geq0$. We need to prove $\|U^{n+1}\|_\tF\leq\sqrt{m_2}$. First, assume that $\max\limits_{s\in[0,\tau]}\|P_{r-1}^n(s,\bx)\|_\tF\leq \kappa\sqrt{m_2}$. Then $\alpha_{r-1}^n(\bx)=1$ and
\begin{equation}\label{eq:thm3-1}
\|\widetilde{P}_{r-1}^n(s,\bx)\|_\tF=\|P_{r-1}^n(s,\bx)\|_\tF\leq\kappa\sqrt{m_2}, \quad {\forall~(s,\bx)\in [0,\tau]\times\Omega}.
\end{equation}
If $\max\limits_{s\in[0,\tau]}\|P_{r-1}^n(s,\bx)\|_\tF>\kappa\sqrt{m_2}$, then $\alpha_{r-1}^n(\bx)=\frac{\kappa\sqrt{m_2}}{\max\limits_{s\in[0,\tau]}\|P_{r-1}^n(s,\bx)\|_\tF}$ and
\begin{equation}\label{eq:thm3-2}
\|\widetilde{P}_{r-1}^n(s,\bx)\|_\tF=\frac{\kappa\sqrt{m_2}}{\max\limits_{s\in[0,\tau]}\|P_{r-1}^n(s,\bx)\|_\tF}\|P_{r-1}^n(s,\bx)\|_\tF\leq\kappa\sqrt{m_2},
\end{equation}
for any {$(s,\bx)\in [0,\tau]\times\Omega$}. According to \eqref{eq:RETDRKr}, we have the explicit expression for the rescaled ETDRK$r$ scheme:
\begin{equation}\label{eq:thm3-3}
U^{n+1}=e^{\tau\mL_\kappa}U^n+\int_{0}^{\tau}e^{(\tau-s)\mL_\kappa}\widetilde{P}_{r-1}^n(s)\ds,
\end{equation}
where $\mL_\kappa=\varepsilon^2\Delta-\kappa\mI$. Taking the supremum norm $\|\cdot\|_{\mathcal{X}}$ on both sides of \eqref{eq:thm3-3} and using \eqref{eq:thm3-1}-\eqref{eq:thm3-2} and Lemma \ref{lambda-Delta}, we obtain
\begin{equation*}
\begin{aligned}
\|U^{n+1}\|_{\mathcal{X}}=&\Big\| e^{\mL_\kappa\tau}U^n+\int_{0}^{\tau}e^{\mL_\kappa(\tau-s)}\widetilde{P}_{r-1}^n(s)\ds\Big\|_{\mathcal{X}}\\
\leq&e^{-\kappa\tau}\|e^{\tau\varepsilon^2\Delta}U^n\|_\mX+\int_{0}^{\tau}e^{-\kappa(\tau-s)}\|e^{\varepsilon^2\Delta(\tau-s)}\widetilde{P}_{r-1}^n(s)\|_{\mathcal{X}}\ds\\
\leq& e^{-\kappa\tau}\|U^n\|_\mX+\int_{0}^{\tau}e^{-\kappa(\tau-s)}\|\widetilde{P}_{r-1}^n(s)\|_{\mathcal{X}}\ds\\
\leq&\Big(e^{-\kappa\tau}+\left(\frac{1}{\kappa}-\frac{1}{\kappa}e^{-\kappa\tau}\right)\kappa\Big)\sqrt{m_2}=\sqrt{m_2},
\end{aligned}
\end{equation*}
which verifies the MBP of the rescaled ETDRK$r$ scheme.
\end{proof}

\subsection{Original energy dissipation}
This subsection proves that first- and second-order ETDRK$r$ schemes unconditionally satisfy the original energy dissipation law, while third- and higher-order schemes satisfy it under suitable time-step constraints. To ensure the completeness of the proof, we first present several preliminary lemmas, which serve as the primary tools for the subsequent demonstration.

\begin{lemma}\label{lem:martix_C}
Let $\mathbf{C}_r^n$ and $V_r$ be defined as in \eqref{matrix:C}–\eqref{eq:matrix-v-d}. Assume that $\kappa\geq 3m_2+1$, $\|U^n\|_\tF\leq \sqrt{m_2}$, and $\|W_r^n(a_{r,k}\tau)\|_\tF\leq\sqrt{m_2}$ for $1\leq k\leq r$. 
Then the following estimate holds:
\begin{equation}\label{eq:lem3.2-0}
\|\mathbf{C}_r^n\|_\tF
\leq \frac{2\kappa}{\sigma_{\min}(V_r)}
\left(\sum_{k=1}^r\|W_r^n(a_{r,k}\tau)-U^n\|_\tF^2\right)^{\frac12},
\end{equation}
where $\sigma_{\min}(V_r)$ denotes the smallest singular value of $V_r$.
\end{lemma}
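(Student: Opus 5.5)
The estimate follows by combining an operator-norm bound for the Kronecker product with the Lipschitz bound from Lemma \ref{Nkappa}. Start from \eqref{matrix:C}, $\mathbf{C}_r^n=(V_r^{-1}\otimes I_{m_1})D_r^n$, and bound the Frobenius norm of a product by the spectral norm of the left factor times the Frobenius norm of the right factor:
\begin{equation*}
\|\mathbf{C}_r^n\|_\tF\le \|V_r^{-1}\otimes I_{m_1}\|_2\,\|D_r^n\|_\tF .
\end{equation*}
This inequality holds because, writing $D_r^n$ column by column, each column of $\mathbf{C}_r^n$ is the matrix $V_r^{-1}\otimes I_{m_1}$ applied to the corresponding column of $D_r^n$. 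Summing the squared Euclidean norms over the $m_2$ columns then gives the bound.

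Next, evaluate the spectral norm of the Kronecker factor. The singular values of $A\otimes B$ are the pairwise products of the singular values of $A$ and $B$, and the singular values of $I_{m_1}$ all equal one. Hence
\begin{equation*}
\|V_r^{-1}\otimes I_{m_1}\|_2=\|V_r^{-1}\|_2=1/\sigma_{\min}(V_r).
\end{equation*}
Here $V_r$ is invertible, since it is a Vandermonde matrix with distinct nonzero nodes $a_{r,k}$, scaled column-wise by $\mathrm{diag}(a_{r,k})$.

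Finally, bound $\|D_r^n\|_\tF$. Its Frobenius norm squared is the sum over $k$ of $\|\mNk[W_r^n(a_{r,k}\tau)]-\mNk[U^n]\|_\tF^2$. By hypothesis, both $U^n$ and the stage values $W_r^n(a_{r,k}\tau)$ satisfy the bound $\sqrt{m_2}$, and $\kappa\ge 3m_2+1$. So the Lipschitz part of Lemma \ref{Nkappa} applies to each block and bounds it by $2\kappa\|W_r^n(a_{r,k}\tau)-U^n\|_\tF$. Taking square roots and combining with the first two steps yields \eqref{eq:lem3.2-0}.

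The only step needing care is the Kronecker-product norm identity and the product inequality $\|AB\|_\tF\le\|A\|_2\|B\|_\tF$. Both are standard, and I would verify them via the SVD of $V_r$. The rest of the argument is routine.
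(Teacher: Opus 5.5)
Your proof is correct and follows the paper's argument step for step. The steps are the column-wise bound $\|\mathbf{C}_r^n\|_\tF\le\|V_r^{-1}\otimes I_{m_1}\|_2\,\|D_r^n\|_\tF$, then the identity $\|V_r^{-1}\otimes I_{m_1}\|_2=\sigma_{\min}^{-1}(V_r)$, then the Lipschitz part of Lemma~\ref{Nkappa} applied blockwise to $D_r^n$. One cosmetic slip: $V_r=\mathrm{diag}(a_{r,k})\,\widetilde V_r$ with $(\widetilde V_r)_{kj}=a_{r,k}^{j-1}$ is a row scaling of a Vandermonde matrix, not a column scaling. Invertibility still follows exactly as you say, since the nodes are distinct and nonzero.
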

\begin{proof}
For the matrix $V_r$ defined by \eqref{eq:matrix-v-d}, the SVD of $V_r^{-1}$ is written as $V_r^{-1}=P\Sigma Q^\top$, where $P$ and $Q$ are orthogonal matrices, and $\Sigma$ is the diagonal matrix of singular values $\sigma_i$ with $i=1,\ldots,r$. Thus, according to $\mathbf{C}_r^n=\left(V_r^{-1}\otimes I_{m_1}\right)D_r^n$, we have
\begin{equation}\label{eq:lem3.2-1}
\begin{split}
\|\mathbf{C}_r^n\|_\tF&=\Big(\sum_{j=1}^{m_2}\|\left(V_r^{-1}\otimes I_{m_1}\right)D_r^n[:,j]\|_2^2\Big)^{\frac 12}\\
&\leq\Big(\|\left(V_r^{-1}\otimes I_{m_1}\right)\|_2^2\sum_{j=1}^{m_2}\|D_r^n[:,j]\|_2^2\Big)^{\frac 12}\\
&=\|V_r^{-1}\otimes I_{m_1}\|_2\|D_r^n\|_\tF=\|V_r^{-1}\|_2\|D_r^n\|_\tF\\
&=\sigma_{\max}(V_r^{-1})\|D_r^n\|_\tF=\sigma_{\min}^{-1}(V_r)\|D_r^n\|_\tF,
\end{split}
\end{equation}
where $D_r^n[:,j]$ denotes the $j$-th column of $D_r^n$ and $\sigma_{\max}(V_r^{-1})=\sigma_{\min}^{-1}(V_r)=\max_{i=1,\ldots,r}|\sigma_i|$.

Based on the expression for the matrix $D_r^n$ in \eqref{eq:matrix-v-d} and using Lemma \ref{Nkappa}, we have
\begin{equation}\label{eq:lem3.2-2}
\|D_r^n\|_\tF=\left(\sum_{k=1}^r\|\mNk[W_r^n(a_{r,k}\tau)]-\mNk[U^n]\|_\tF^2\right)^{\frac 12}\leq 2\kappa\left(\sum_{k=1}^r\|W_r^n(a_{r,k}\tau)-U^n\|_\tF^2\right)^{\frac 12}.
\end{equation}
Combining \eqref{eq:lem3.2-1}-\eqref{eq:lem3.2-2} yields \eqref{eq:lem3.2-0}.
\end{proof}

To simplify the notations, we define the following $L^2$-norm for any $V=(V_{i,j})_{m_1\times m_2}\in\mathbb{R}^{m_1\times m_2}$ with $V_{i,j}\in L^2(\Omega)$:
$$\|V\|_{L^2(\Omega)}=\left(\int_\Omega\|V\|_\tF^2\dx\right)^\frac{1}{2}=\left(\sum_{i=1}^{m_1}\sum_{j=1}^{m_2}\|V_{ij}\|_{L^2(\Omega)}^2\right)^\frac{1}{2}.$$

To derive $L^2$ stability estimates for the operator $\phi_k(t\mL_\kappa)$ acting on matrix-valued functions, we state the following lemma, which is a direct extension of the scalar result in \cite{quan2025maximum} to the matrix case.

\begin{lemma}\label{lem:phi}
Let $t>0$, $k\in\mathbb{N}$ with $k\ge1$, and $V=(V_{i,j})_{m_1\times m_2}\in \mathbb{R}^{m_1\times m_2}$ with $V_{i,j}\in L^2(\Omega)$. Then, for any $\lambda\in(0,1)$, the formula $\phi_{k}(z):=\Big(e^z-\sum_{j=0}^{k-1} \frac{z^j}{j!}\Big)z^{-k}$ about non-positive operator $t\mL_\kappa$ satisfies the following inequality:
\begin{align}\label{eq:lem3.3-0}
\|\lambda^k\phi_k(\lambda t\mL_\kappa)V\|_{L^2(\Omega)}\leq \|\phi_k(t\mL_\kappa)V\|_{L^2(\Omega)}\leq\frac{1}{k!}\|V\|_{L^2(\Omega)}.
\end{align}
\end{lemma}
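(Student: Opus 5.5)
Since $\mL_\kappa=\varepsilon^2\Delta-\kappa\mI$ acts entrywise on $V$, and $\|V\|_{L^2(\Omega)}^2=\sum_{i,j}\|V_{ij}\|_{L^2(\Omega)}^2$, the plan is to reduce the statement to the scalar case and then work in Fourier space. Under periodic boundary conditions, $\mL_\kappa$ is diagonalized by the Fourier basis $\{e^{i\boldsymbol{\xi}\cdot\bx}\}$. Its eigenvalues are $-\mu_{\boldsymbol\xi}$ with $\mu_{\boldsymbol\xi}=\varepsilon^2|\boldsymbol\xi|^2+\kappa\geq\kappa>0$. Hence $\phi_k(t\mL_\kappa)$ acts on each Fourier coefficient $\widehat{V_{ij}}(\boldsymbol\xi)$ by multiplication by the real number $\phi_k(-t\mu_{\boldsymbol\xi})$. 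By Parseval's identity, applied entry by entry and summed over $i,j$, both inequalities in \eqref{eq:lem3.3-0} follow once we prove two scalar statements for all $x\geq0$ and $\lambda\in(0,1)$:
\begin{equation*}
0\leq\lambda^k\phi_k(-\lambda x)\leq\phi_k(-x)\leq\frac{1}{k!}.
\end{equation*}
Each Fourier mode then satisfies $|\lambda^k\phi_k(-\lambda t\mu)\widehat V|\leq|\phi_k(-t\mu)\widehat V|\leq\frac1{k!}|\widehat V|$, and summing gives the claim. This is the same argument as in \cite{quan2025maximum}; the matrix structure plays no role beyond the entrywise decoupling.

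For the scalar inequalities, I will use the integral representation
\begin{equation*}
\phi_k(z)=\frac{1}{(k-1)!}\int_0^1 e^{(1-\theta)z}\theta^{k-1}\,\mathrm{d}\theta,
\end{equation*}
which follows from Taylor's formula with integral remainder for $e^z$. For $z=-x\leq0$ the integrand lies in $(0,\theta^{k-1}]$. Therefore $0<\phi_k(-x)\leq\frac{1}{(k-1)!}\int_0^1\theta^{k-1}\,\mathrm{d}\theta=\frac1{k!}$, which gives the upper bound.

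For the monotonicity in $\lambda$, I substitute $\sigma=\lambda\theta$ to obtain
\begin{equation*}
\lambda^k\phi_k(-\lambda x)=\frac{1}{(k-1)!}\int_0^\lambda e^{-(\lambda-\sigma)x}\sigma^{k-1}\,\mathrm{d}\sigma=:g(\lambda).
\end{equation*}
Differentiating gives
\begin{equation*}
g'(\lambda)=\frac{1}{(k-1)!}\Big(\lambda^{k-1}-x\int_0^\lambda e^{-(\lambda-\sigma)x}\sigma^{k-1}\,\mathrm{d}\sigma\Big).
\end{equation*}
Since $\sigma^{k-1}\leq\lambda^{k-1}$ on $[0,\lambda]$, we have $x\int_0^\lambda e^{-(\lambda-\sigma)x}\sigma^{k-1}\,\mathrm{d}\sigma\leq\lambda^{k-1}\bigl(1-e^{-\lambda x}\bigr)\leq\lambda^{k-1}$. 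So $g'\geq0$, and therefore $g(\lambda)\leq g(1)=\phi_k(-x)$.

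The main point needing care is this monotonicity step; the rest is routine. The justification of the functional calculus via Fourier series for $L^2$ periodic functions is standard and will only be sketched.
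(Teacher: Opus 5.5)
Your proof is correct and follows the same route as the paper: since $\mL_\kappa$ acts entrywise, the matrix estimate reduces to the scalar one, and the entrywise bounds are squared and summed over $(i,j)$. The only difference is that the paper simply cites \cite[Lemma~1]{quan2025maximum} for the scalar inequality, whereas you prove it directly from the Fourier symbol $-(\varepsilon^2|\boldsymbol\xi|^2+\kappa)$ and the integral representation of $\phi_k$; your formulas for $g(\lambda)$ and $g'(\lambda)$ and the conclusion $g'(\lambda)\geq 0$ are all correct.
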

\begin{proof}
Since $\phi_k(t\mL_\kappa)$ and $\lambda^k\phi_k(\lambda t\mL_\kappa)$ act componentwise on matrix-valued functions, the assertion follows by applying the corresponding scalar estimate in \cite[Lemma~1]{quan2025maximum} to each component $V_{i,j}$:
\begin{equation*}
\|\lambda^k\phi_k(\lambda t\mL_\kappa)V_{i,j}\|_{L^2(\Omega)}
\leq
\|\phi_k(t\mL_\kappa)V_{i,j}\|_{L^2(\Omega)}
\leq
\frac{1}{k!}\|V_{i,j}\|_{L^2(\Omega)}.
\end{equation*}
Squaring these inequalities, summing over $i=1,\ldots,m_1$ and $j=1,\ldots,m_2$, and then taking square roots gives \eqref{eq:lem3.3-0}.
\end{proof}

\begin{lemma}\label{Lipschitz-f}
For any $U,V\in\mathbb{R}^{m_1\times m_2}$ with $m_1\geq m_2$, assume $\|U\|_\tF,\|V\|_\tF\leq \sqrt{m_2}$, if $\kappa\geq 3m_2+1$, we have
\begin{equation}\label{eq:lem4-0}
\int_\Omega\la F(U)-F(V),I_{m_2}\ra_\tF+\la U-V,f(V) \ra_\tF\dx\leq\frac{\kappa}{2}\int_\Omega\|U-V\|_\tF^2\dx.
\end{equation}
\end{lemma}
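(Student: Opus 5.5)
The inequality is pointwise in $\bx$, so it suffices to prove, for matrices $U,V$ with $\|U\|_\tF,\|V\|_\tF\le\sqrt{m_2}$,
\begin{equation*}
G(U,V):=\la F(U)-F(V),I_{m_2}\ra_\tF+\la U-V,f(V)\ra_\tF\le\frac{\kappa}{2}\|U-V\|_\tF^2,
\end{equation*}
and then integrate over $\Omega$. Set $\Phi(U):=\la F(U),I_{m_2}\ra_\tF=\frac14\|U^\top U-I_{m_2}\|_\tF^2$. As stated in the introduction, $\nabla\Phi(U)=-f(U)=UU^\top U-U$ with respect to the Frobenius inner product. Hence $G(U,V)=\Phi(U)-\Phi(V)-\la\nabla\Phi(V),U-V\ra_\tF$ is exactly the Bregman remainder of $\Phi$. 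The claim is therefore a one-sided Lipschitz-gradient (semiconvexity-type) bound for $\Phi$ on the Frobenius ball of radius $\sqrt{m_2}$.

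I would write $G$ via Taylor's formula with integral remainder along the segment $W_\theta=V+\theta(U-V)$, $\theta\in[0,1]$. The ball is convex, so $\|W_\theta\|_\tF\le\sqrt{m_2}$ throughout. With $H:=U-V$, this gives
\begin{equation*}
G(U,V)=\int_0^1\la\nabla\Phi(W_\theta)-\nabla\Phi(V),H\ra_\tF\,\od\theta=\int_0^1\la f(V)-f(W_\theta),H\ra_\tF\,\od\theta.
\end{equation*}
Now $f(V)-f(W_\theta)=\mNk[V]-\mNk[W_\theta]+\kappa\theta H$, and Lemma \ref{Nkappa} gives $\|\mNk[V]-\mNk[W_\theta]\|_\tF\le(\kappa+1+3m_2)\theta\|H\|_\tF$. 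That route yields a constant near $2\kappa$, which is too crude. I would instead bound $f(V)-f(W_\theta)$ directly, following the telescoping in the proof of Lemma \ref{Nkappa}:
\begin{equation*}
\|W_\theta W_\theta^\top W_\theta-VV^\top V\|_\tF\le 3m_2\,\theta\|H\|_\tF.
\end{equation*}
Together with the linear part $-\theta H$, this gives $\la f(V)-f(W_\theta),H\ra_\tF\le(3m_2-1)\theta\|H\|_\tF^2$. A cleaner variant uses the Hessian directly: $D^2\Phi(W)[H,H]=\|W^\top H+H^\top W\|_\tF^2/2+\la W^\top W-I,H^\top H\ra_\tF\ge-\|H\|_\tF^2$, but I need an upper bound, namely $D^2\Phi(W)[H,H]\le 2\|W\|_\tF^2\|H\|_\tF^2+\|W\|_\tF^2\|H\|_\tF^2\le 3m_2\|H\|_\tF^2$. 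Either way, integrating $\theta$ over $[0,1]$ yields
\begin{equation*}
G\le\frac{3m_2-1}{2}\|H\|_\tF^2\ \text{ or }\ G\le\frac{3m_2}{2}\|H\|_\tF^2,
\end{equation*}
and both are at most $\frac{\kappa}{2}\|H\|_\tF^2$ since $\kappa\ge 3m_2+1$.

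The main obstacle is keeping the constant below $\kappa$ rather than $2\kappa$. So the one delicate step is to use the \emph{direct} estimate on the cubic term instead of quoting the $2\kappa$ Lipschitz bound of Lemma \ref{Nkappa}. Everything else, namely the Taylor formula, the convexity of the ball, and the pointwise-to-integral passage, is routine.
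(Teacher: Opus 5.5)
Your proof is correct and is essentially the paper's argument. The paper also differentiates $s\mapsto\int_\Omega\la F(V+s(U-V)),I_{m_2}\ra_\tF\dx$ along the segment. It bounds $\la f(V)-f(W(s)),U-V\ra_\tF$ by a direct telescoping estimate $\|f(V)-f(W)\|_\tF\le(1+3m_2)\|W-V\|_\tF\le\kappa\|W-V\|_\tF$ rather than the cruder bound from Lemma~\ref{Nkappa}, and then integrates in $s$ to get the factor $\frac{\kappa}{2}$.

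The only differences are minor. You keep the sign of the linear part $-\theta H$, which yields the slightly sharper constant $\frac{3m_2-1}{2}$. You also state explicitly the convexity of the Frobenius ball, which the paper uses tacitly when applying its Lipschitz bound to $W(s)$.
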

\begin{proof}
According to the expressions for $f$ and using $\|U\|_\tF,\|V\|_\tF\leq \sqrt{m_2}$ and $\kappa\geq 3m_2+1$, we can obtain
\begin{equation}\label{eq:lem4-1}
\begin{aligned}
\|f(V)-f(U)\|_\tF=&\|UU^\top U-U-(VV^\top V-V)\|_\tF\\
=&\|-(U-V)+UU^\top(U-V)+U(U-V)^\top V+(U-V)V^\top V\|_\tF\\
\leq&(1+\|U\|_\tF^2+\|U\|_\tF\|V\|_\tF+\|V\|_\tF^2)\|U-V\|_\tF\\
\leq&\kappa\|U-V\|_\tF.
\end{aligned}
\end{equation}
Let $W(s):=V+s(U-V)$ for $s\in[0,1]$ and define 
\begin{equation*}
g(s)=\int_\Omega\la F(W(s)),I_{m_2}\ra_\tF\dx.
\end{equation*}
Using the chain rule and \eqref{eq:lem4-1} yields
\begin{equation}\label{eq:lem4-2}
\begin{aligned}
g'(s)=&\int_\Omega\la -f(W(s)),U-V\ra_\tF\dx\\
=&\int_\Omega\la -f(V),U-V\ra_\tF+\la f(V)-f(W(s)),U-V\ra_\tF\dx\\
\leq&\int_\Omega\la -f(V),U-V\ra_\tF+\|f(V)-f(W(s))\|_\tF\|U-V\|_\tF\dx\\
\leq&\int_\Omega\la -f(V),U-V\ra_\tF+\kappa s\|U-V\|_\tF^2\dx,
\end{aligned}
\end{equation}
where we used $\|W(s)-V\|_\tF=s\|U-V\|_\tF$.

Integrating \eqref{eq:lem4-2} from $s=0$ to $1$ yields
\begin{equation}\label{eq:lem4-3}
\begin{aligned}
\int_0^1 g'(s)\ds=&g(1)-g(0)=\int_\Omega\la F(U)-F(V),I_{m_2}\ra_\tF\dx\\
\leq&\int_\Omega\int_0^1\Big(\la -f(V),U-V\ra_\tF+\kappa s\|U-V\|_\tF^2\Big)\ds\dx\\
=&\int_\Omega\la -f(V),U-V\ra_\tF+\frac{\kappa}{2}\|U-V\|_\tF^2\dx.
\end{aligned}
\end{equation}
Finally, rearrange \eqref{eq:lem4-3} to obtain \eqref{eq:lem4-0}.
\end{proof}

\begin{theorem}[Energy dissipation]\label{thm:energy}
Consider the generalized matrix-valued Allen--Cahn equation with periodic boundary conditions and assume that $\kappa\geq 3m_2+1$. 
Let $\{U^n\}_{n\ge0}$ be the numerical solution generated by the rescaled 
ETDRK$r$ scheme with time step $\tau>0$. Then for any $r\ge1$ there exists 
a constant $\tau_{\max,r}>0$, independent of $\varepsilon$, such that
\begin{equation*}
E(U^{n+1})-E(U^{n})\leq 0, \quad \text{for all } n\geq0,
\quad\text{whenever } \tau\le \tau_{\max,r}.
\end{equation*}
Moreover, $\tau_{\max,1}=\tau_{\max,2}=+\infty$, and for $r\geq3$,
\begin{equation*}
\tau_{\max,r}=\frac{1}{10\kappa}\min_{1\le j\le r-1}\frac{\sigma_{\min}(V_j)}{j},
\end{equation*}
where $\sigma_{\min}(V_j)$ denotes the smallest singular value of the matrix 
$V_j$ defined in \eqref{eq:matrix-v-d}. In particular, the scheme is unconditionally 
energy dissipative for $r=1$ and $2$, and conditionally energy dissipative  
for $r\geq3$ under the above step-size constraint.
\end{theorem}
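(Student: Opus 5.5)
The plan is to follow the scalar argument of \cite{quan2025maximum}, now written with Frobenius inner products. The scheme \eqref{eq:RETDRKr} will be recast as a one-step update in which $U^{n+1}-U^n$ is tested against the first variation of $E$.

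\textbf{Reformulation.} Set $\delta:=U^{n+1}-U^n$. Write $\mathcal{B}:=\tau\phi_1(\tau\mL_\kappa)$; it is invertible, self-adjoint and commutes with $\mL_\kappa$. From \eqref{eq:RETDRKr}, using $e^{\tau\mL_\kappa}-\mI=\tau\mL_\kappa\phi_1(\tau\mL_\kappa)$, we get
\begin{equation*}
\mathcal{B}^{-1}\delta=\mL_\kappa U^n+\alpha_{r-1}^n\mNk[U^n]+\tau^{-1}\phi_1(\tau\mL_\kappa)^{-1}\sum_{k=1}^{r-1}\tau k!\,\phi_{k+1}(\tau\mL_\kappa)\big(\alpha_{r-1}^nC_{r-1,k}^n\big).
\end{equation*}

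\textbf{Energy difference.} I will expand $E(U^{n+1})-E(U^n)$. The gradient part is the identity $\frac{\varepsilon^2}{2}(\|\nabla U^{n+1}\|^2-\|\nabla U^n\|^2)=\varepsilon^2\langle\nabla U^n,\nabla\delta\rangle+\frac{\varepsilon^2}{2}\|\nabla\delta\|^2$. For the potential part I apply Lemma~\ref{Lipschitz-f} with $U=U^{n+1}$ and $V=U^n$; both lie in $\mathcal{X}_{m_2}$ by Theorem~\ref{thm:MBP}. This gives
\begin{equation*}
E(U^{n+1})-E(U^n)\le -\big\langle \mL_\kappa U^n+\mNk[U^n],\delta\big\rangle+\Big\langle\big(\tfrac{\kappa}{2}-\tfrac{1}{2}\mL_\kappa\big)\delta,\delta\Big\rangle,
\end{equation*}
using $\varepsilon^2\Delta=\mL_\kappa+\kappa$. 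Substituting the reformulation gives three contributions. The first is $-\langle\mathcal{B}^{-1}\delta,\delta\rangle$. The second is the rescaling term $\langle(\alpha-1)\mNk[U^n],\delta\rangle$. The third is the $C$-terms. The quadratic form $-\mathcal{B}^{-1}+\frac{\kappa}{2}-\frac12\mL_\kappa$ is a function of the symbol $z=\tau\lambda\le-\kappa\tau$. For the scalar function $-z/(e^z-1)\cdot\frac{1}{\tau}-\frac{z}{2\tau}+\frac{\kappa}{2}$ one checks $z/(e^z-1)+z/2\ge 1\cdot$ (in fact $\ge 1+z/2+\dots$) so this form is $\le -c\,\tau^{-1}\|\delta\|^2$ with a fixed fraction of $\tau^{-1}$ left over.

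\textbf{Low orders, $r=1,2$.} For $r=1$ there are no $C$-terms and $\alpha\equiv1$, so dissipation is unconditional. For $r=2$, the remark before Theorem~\ref{thm:MBP} says $P_1$ is already bounded, so $\alpha_1^n=1$. The single $C_{1,1}=\mNk[W_1^n(\tau)]-\mNk[U^n]$ is then handled exactly as in \cite{liu2025maximum}: it is rewritten via the ETDRK1 stage and absorbed by Lemma~\ref{lem:phi} and the $2\kappa$-Lipschitz bound. This is the standard ETDRK2 argument applied componentwise.

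\textbf{High orders, $r\ge3$ (main obstacle).} Here both the $C$-terms and the rescaling term $(\alpha-1)\mNk[U^n]$ must be bounded by a small multiple of $\tau^{-1}\|\delta\|^2$. For the $C$-terms I will use Lemma~\ref{lem:martix_C} and Lemma~\ref{lem:phi}, with $\lambda=a_{j,k}$, to reduce them to stage differences $\|W_j^n(a_{j,k}\tau)-U^n\|$. These in turn are bounded inductively in $j$ by $\|\delta\|$-type quantities times factors $\kappa\tau j/\sigma_{\min}(V_j)$; the induction over stages is what produces $\min_j\sigma_{\min}(V_j)/j$. The rescaling term is the hardest piece. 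The fix is to write $\widetilde P=P-(1-\alpha)P$ and use $0\le1-\alpha$, with $(1-\alpha)\max_s\|P\|\le\max_s\|P-\mNk[U^n]\|$, so that it is also controlled by the $C$-coefficients. Collecting terms, the perturbation is at most $\sum_j\frac{10\kappa\tau j}{\sigma_{\min}(V_j)}\cdot\frac{c}{\tau}\|\delta\|^2$-type bounds. The constant $10$ is chosen so that the total stays below the negative definite part when $\tau\le\tau_{\max,r}$. Nothing depends on $\varepsilon$, because only the sign of $\mL_\kappa$ and the bounds in Lemma~\ref{lem:phi} are used.
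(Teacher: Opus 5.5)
Your first half takes a different route from the paper, and it is workable after two repairs. The paper never expands $E(U^{n+1})-E(U^n)$ through the one-step formula. Instead it integrates $\frac{\od}{\ods}E(W_r^n)$ along the stage trajectory, substitutes $\ps W_r^n=\mL_\kappa W_r^n+\widetilde P_{r-1}^n$ and applies Young's inequality, so it never inverts $\phi_1(\tau\mL_\kappa)$.

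The first repair is an algebra slip. One has exactly $\frac{\varepsilon^2}{2}\|\nabla\delta\|_{L^2(\Omega)}^2+\frac\kappa2\|\delta\|_{L^2(\Omega)}^2=-\frac12\la\mL_\kappa\delta,\delta\ra$. So the quadratic form is $-\mathcal B^{-1}-\frac12\mL_\kappa$, with symbol $-\frac1\tau\cdot\frac z2\coth\frac z2\le-\frac1\tau$. Your extra $+\frac\kappa2$ leaves only the factor $\kappa\tau/(e^{\kappa\tau}-1)<1$, and then the argument would not reproduce the constant $10$.

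The second repair concerns your $C$-terms, which carry $\phi_1(\tau\mL_\kappa)^{-1}$. Its symbol grows like $\tau\varepsilon^2|\xi|^2$ in Fourier variables $\xi$. Lemma~\ref{lem:phi} only bounds $\phi_k$ from above and does not control $\phi_1^{-1}\phi_{k+1}$, so your $\varepsilon$-independence claim is unsupported as written. The missing fact is that, for $z\le0$, $k!\phi_{k+1}(z)/\phi_1(z)=\int_0^1e^{(1-\theta)z}\theta^k\,\mathrm{d}\theta\big/\int_0^1e^{(1-\theta)z}\,\mathrm{d}\theta\in[0,1]$.

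With both repairs you get $E(U^{n+1})-E(U^n)\le-\frac1\tau\|\delta\|^2+2\sqrt{r-1}\|\mathbf C_{r-1}^n\|\|\delta\|$. This is exactly the paper's sufficient condition $2\tau\sqrt{r-1}\|\mathbf C_{r-1}^n\|_{L^2(\Omega)}\le\|U^{n+1}-U^n\|_{L^2(\Omega)}$; the paper reaches it from $-\frac1{2\tau}\|\delta\|^2+2\tau(r-1)\|\mathbf C_{r-1}^n\|^2$.

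The genuine gap is the step you call the main obstacle: it is asserted rather than proved, and the bookkeeping you sketch does not close. The stage increments $W_j^n(a_{j,k}\tau)-U^n$ are not controlled by $\|\delta\|$ directly. Lemma~\ref{lem:phi} with $\lambda=a_{j,k}$ controls them by the ETDRK1 increment: $\|W_j^n(a_{j,k}\tau)-U^n\|\le\|W_1^n(\tau)-U^n\|+\frac32\tau\sqrt{j-1}\|\mathbf C_{j-1}^n\|$.

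The missing idea is to anchor everything at $W_1^n(\tau)-U^n$. Since $\|U^{n+1}-W_1^n(\tau)\|\le\frac32\tau\sqrt{r-1}\|\mathbf C_{r-1}^n\|$, the reverse triangle inequality reduces the goal to the invariant $\frac72\tau\sqrt j\|\mathbf C_j^n\|\le\|W_1^n(\tau)-U^n\|$ (cf.\ \eqref{eq:thm3.5-14}). Lemma~\ref{lem:martix_C} propagates this invariant from $j-1$ to $j$ exactly when $10\kappa\tau j\le\sigma_{\min}(V_j)$, because $\frac72\cdot2\kappa\cdot(1+\frac37)=10\kappa$.

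This is why $\tau_{\max,r}$ is a \emph{minimum} over $j$: each factor $10\kappa\tau j/\sigma_{\min}(V_j)\le1$ merely preserves the invariant. Your proposed bound is a \emph{sum} over $j$ of terms $\frac{10\kappa\tau j}{\sigma_{\min}(V_j)}\cdot\frac c\tau\|\delta\|^2$. Under $\tau\le\tau_{\max,r}$ that sum can reach $r-1$ times the dissipation, so it does not stay below the negative definite part and does not yield the theorem.
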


\begin{proof}
Using the proof strategy for the square-matrix-valued Allen--Cahn equation with $m_1 = m_2$ in \cite{liu2025maximum}, we can show that, for the generalized matrix-valued Allen--Cahn equation, the ETDRK$1$ and ETDRK$2$ schemes also satisfy energy dissipation law without any restriction on $\tau$, that is, $\tau_{\max,1} = \tau_{\max,2} = +\infty$. The detailed proof for $r=1$, $2$ is omitted here, and we only focus on the rescaled ETDRK$r$ schemes with $r\geq 3$.

According to \eqref{eq:MAC_ka} and the definition of the energy in \eqref{eq:energy}, the energy of the function $W_r^n$ satisfies
\begin{equation}\label{eq:thm3.5-1}
\begin{aligned}
\frac{\od}{\ods}E(W_r^n)&=-\int_{\Omega}\la \varepsilon^2\Delta W_r^n+f(W_r^n),\ps W_r^n\ra_\tF\dx\\
&=\int_{\Omega}\la-\mL_{\kappa}W_r^n-\mNk [W_r^n],\ps W_r^n\ra_\tF\dx.
\end{aligned}
\end{equation}
Then, we apply the integral operator $\int_0^{\tau}\cdot~\ds$, where `$\cdot$' stands for any admissible integrand, to \eqref{eq:thm3.5-1} and obtain
\begin{equation}\label{eq:thm3.5-2}
E(U^{n+1})-E(U^n)=-\int_{\Omega}\Big\la\int_0^{\tau}\mL_{\kappa}W_r^n(\ps W_r^n)^{\top}\ds,I_{m_1}\Big\ra_\tF\dx-\int_{\Omega}\Big\la \int_0^{\tau}\mNk [W_r^n](\ps W_r^n)^{\top}\ds,I_{m_1}\Big\ra_\tF\dx.
\end{equation}
Applying integration by parts to the second term in \eqref{eq:thm3.5-2} and using Lemma \ref{Lipschitz-f} yields
\begin{equation}\label{eq:thm3.5-3}
\begin{split}
&\int_{\Omega}\left\la \int_0^{\tau}\mNk [W_r^n](\ps W_r^n)^{\top}\ds,I_{m_1}\right\ra_\tF\dx\\
=&\int_{\Omega}-\left\la F(U^{n+1})-F(U^n),I_{m_2}\right\ra_\tF+\frac{\kappa}{2}\left\la U^{n+1}(U^{n+1})^{\top}-U^n(U^n)^{\top},I_{m_1}\right\ra_\tF\dx\\
\geq&\int_{\Omega}\la f(U^n)(U^{n+1}-U^n)^{\top}+\kappa U^n(U^{n+1}-U^n)^{\top},I_{m_1}\ra_\tF\dx\\
=&\int_{\Omega}\la \mNk[U^n],U^{n+1}-U^n\ra_\tF\dx.
\end{split}
\end{equation}
Substituting \eqref{eq:thm3.5-3} into \eqref{eq:thm3.5-2}, employing the differential form 
\begin{equation*}
\ps W_r^n=\mL_\kappa W_r^n+\widetilde{P}_{r-1}^n(s), \quad s\in (0,\tau],~\bx \in \Omega.
\end{equation*}
and using the Cauchy–Schwarz inequality, we obtain
\begin{equation}\label{eq:thm3.5-5}
\begin{aligned}
E(U^{n+1})-E(U^n)\leq&\int_\Omega\Big\la \int_0^\tau(-\ps W_r^n+\widetilde{P}_{r-1}^n (s)-\mNk [U^n])(\ps W_r^n)^\top\ds,I_{m_1}\Big\ra_\tF\dx\\
\leq&-\frac 12\int_\Omega\int_0^\tau\|\ps W_r^n \|_\tF^2\ds\dx+\frac 12\int_\Omega \int_0^\tau\|\widetilde{P}_{r-1}^n (s)-\mNk [U^n]\|_\tF^2\ds\dx.
\end{aligned}
\end{equation}
For the first term on the right-hand side of \eqref{eq:thm3.5-5}, using the Cauchy–Schwarz inequality in time, we obtain
\begin{equation}\label{eq:thm3.5-6}
\begin{split}
\int_\Omega\int_0^\tau\|\ps W_r^n \|_\tF^2\ds\dx=&\sum_{i=1}^{m_1}\sum_{j=1}^{m_2}\int_\Omega\int_0^\tau(\ps W_r^n)_{i,j}^2\ds\dx\\
=&\frac{1}{\tau}\sum_{i=1}^{m_1}\sum_{j=1}^{m_2}\int_\Omega\Big(\int_0^\tau1\ds\int_0^\tau(\ps W_r^n)_{i,j}^2\ds\Big)\dx\\
\geq&\frac{1}{\tau}\sum_{i=1}^{m_1}\sum_{j=1}^{m_2}\int_\Omega\Big(\int_0^\tau(\ps W_r^n)_{i,j}\ds\Big)^2\dx\\
=&\frac{1}{\tau}\int_\Omega\|U^{n+1}-U^n\|_\tF^2\dx.
\end{split}
\end{equation}
For the second term on the right-hand side of \eqref{eq:thm3.5-5}, using the triangle inequality, we have
\begin{equation}\label{eq:thm3.5-7-0}
\int_\Omega \int_0^\tau\|\widetilde{P}_{r-1}^n (s)-\mNk [U^n]\|_\tF^2\ds\dx
\leq2\int_\Omega\int_0^\tau\Big(\|P_{r-1}^n(s)-\mNk[U^n]\|_\tF^2+\|\widetilde{P}_{r-1}^n (s)-P_{r-1}^n (s)\|_\tF^2\Big)\ds\dx.
\end{equation}
Then, according to the definitions of $\widetilde{P}_{r-1}^n(s)$ and $P_{r-1}^n(s)$ and using $0\leq s\leq \tau$, we obtain
\begin{equation}\label{eq:thm3.5-7-1}
\|P_{r-1}^n(s)-\mNk[U^n]\|_\tF=\Big\|\sum_{k=1}^{r-1}C_{r-1,k}^n\Big(\frac s\tau\Big)^k\Big\|_\tF\leq \sum_{k=1}^{r-1}\Big\|C_{r-1,k}^n\Big\|_\tF\Big(\frac s\tau\Big)^k\leq\sum_{k=1}^{r-1}\|C_{r-1,k}^n\|_\tF,
\end{equation}
and
\begin{equation}\label{eq:thm3.5-7-2}
\begin{aligned}
&\|\widetilde{P}_{r-1}^n (s)-P_{r-1}^n (s)\|_\tF=\max\left\{\frac{\max\limits_{s\in[0,\tau]}\|P_{r-1}^n(s,\bx)\|_\tF-\kappa\sqrt{m_2}}{\max\limits_{s\in[0,\tau]}\|P_{r-1}^n(s,\bx)\|_\tF},0\right\}\|P_{r-1}^n(s,\bx)\|_\tF\\
\leq&\max\{\max\limits_{s\in[0,\tau]}\|P_{r-1}^n(s,\bx)\|_\tF-\kappa\sqrt{m_2},0\}
\leq\max\limits_{s\in[0,\tau]}\|P_{r-1}^n(s)-\mNk[U^n]\|_\tF\\
=&\max\limits_{s\in[0,\tau]}\Big\|\sum_{k=1}^{r-1}C_{r-1,k}^n\Big(\frac{s}{\tau}\Big)^k\Big\|_\tF\leq\sum_{k=1}^{r-1}\|C_{r-1,k}^n\|_\tF.
\end{aligned}
\end{equation}
Combining \eqref{eq:thm3.5-7-1} and \eqref{eq:thm3.5-7-2} with \eqref{eq:thm3.5-7-0} and using the Cauchy--Schwarz inequality and the definition of $\mathbf{C}_{r-1}^n$ in \eqref{matrix:C}, we obtain
\begin{equation}\label{eq:thm3.5-7}
\int_\Omega \int_0^\tau\|\widetilde{P}_{r-1}^n (s)-\mNk [U^n]\|_\tF^2\ds\dx\leq4\tau(r-1)\int_\Omega\sum_{k=1}^{r-1}\|C_{r-1,k}^n\|^2_\tF\dx
=4\tau(r-1)\int_\Omega\|\mathbf{C}_{r-1}^n\|_\tF^2\dx.
\end{equation}

Next, combining \eqref{eq:thm3.5-5}-\eqref{eq:thm3.5-7} leads to
\begin{equation*}
E(U^{n+1})-E(U^n)
\leq 2\tau(r-1)\|\mathbf{C}_{r-1}^n\|_{L^2(\Omega)}^2-\frac{1}{2\tau}\|U^{n+1}-U^n\|_{L^2(\Omega)}^2.
\end{equation*}
Therefore, we transform the proof of $E(U^{n+1})-E(U^n)\leq 0$ into proving
\begin{equation*}
2\tau\sqrt{r-1}\|\mathbf{C}_{r-1}^n\|_{L^2(\Omega)}\leq\|U^{n+1}-U^n\|_{L^2(\Omega)}.
\end{equation*}

To prove this inequality, it's sufficient to show
\begin{equation}\label{eq:thm3.5-9}
2\tau\sqrt{r-1}\|\mathbf{C}_{r-1}^n\|_{L^2(\Omega)}+\|U^{n+1}-W_1^n(\tau)\|_{L^2(\Omega)}\leq\|W_1^n(\tau)-U^n\|_{L^2(\Omega)},
\end{equation}
where we used the triangle inequality 
\begin{equation*}
\|U^{n+1}-U^n\|_{L^2(\Omega)}\geq \|W_1^n(\tau)-U^n\|_{L^2(\Omega)}-\|U^{n+1}-W_1^n(\tau)\|_{L^2(\Omega)}.
\end{equation*}
According to the explicit formula of the rescaled ETDRK$r$ scheme \eqref{eq:RETDRKr} and using Lemma \ref{lem:phi}, we have
\begin{equation}\label{eq:thm3.5-10}
\begin{aligned}
&\|U^{n+1}-W_1^n(\tau)\|_{L^2(\Omega)}\\
=&\Big\|\tau\phi_1(\tau\mL_\kappa)\left((\alpha_{r-1}^n-1)\mNk[U^n]\right)+\tau\sum_{k=1}^{r-1}k!\phi_{k+1}(\tau\mL_\kappa)\left(\alpha_{r-1}^nC_{r-1,k}^n\right)\Big\|_{L^2(\Omega)}\\
\leq&\tau\|\phi_1(\tau\mL_\kappa)\left((\alpha_{r-1}^n-1)\mNk[U^n]\right)\|_{L^2(\Omega)}+\tau\Big\|\sum_{k=1}^{r-1}k!\phi_{k+1}(\tau\mL_\kappa)\left(\alpha_{r-1}^nC_{r-1,k}^n\right)\Big\|_{L^2(\Omega)}.
\end{aligned}
\end{equation}
Using Lemma \ref{lem:phi} and the calculations in  \eqref{eq:thm3.5-7-2} for $s=0$, and the Cauchy--Schwarz inequality, we obtain
\begin{equation}\label{eq:thm3.5-11}
\begin{split}
\|\phi_1(\tau\mL_\kappa)\left((\alpha_{r-1}^n-1)\mNk[U^n]\right)\|_{L^2(\Omega)}\leq&\|(\alpha_{r-1}^n-1)P_{r-1}^n(0,\bx)\|_{L^2(\Omega)}\\
\leq&\max\limits_{s\in[0,\tau]}\|P_{r-1}^n(s,\bx)-\mNk[U^n]\|_{L^2(\Omega)}\\
=&\max\limits_{s\in[0,\tau]}\Big\|\sum_{k=1}^{r-1}C_{r-1,k}^n\Big(\frac{s}{\tau}\Big)^k\Big\|_{L^2(\Omega)}\\
\leq&\sqrt{r-1}\|\mathbf{C}_{r-1}^n\|_{L^2(\Omega)},
\end{split}
\end{equation}
and
\begin{equation}\label{eq:thm3.5-12}
\begin{aligned}
\Big\|\sum_{k=1}^{r-1}k!\phi_{k+1}(\tau\mL_{\kappa})\left(\alpha_{r-1}^nC_{r-1,k}^n\right)\Big\|_{L^2(\Omega)}
\leq&\sum_{k=1}^{r-1}\|k!\phi_{k+1}(\tau\mL_{\kappa})\left(\alpha_{r-1}^nC_{r-1,k}^n\right)\|_{L^2(\Omega)}\\
\leq&\sum_{k=1}^{r-1}\frac{1}{k+1}\|\alpha_{r-1}^nC_{r-1,k}^n\|_{L^2(\Omega)}\\
\leq&\sum_{k=1}^{r-1}\frac{1}{k+1}\|C_{r-1,k}^n\|_{L^2(\Omega)}\\
\leq&\frac{1}{2}\sqrt{r-1}\Big(\sum_{k=1}^{r-1}\|C_{r-1,k}^n\|_{L^2(\Omega)}^2\Big)^\frac{1}{2}\\
=&\frac 12\sqrt{r-1}\|\mathbf{C}_{r-1}^n\|_{L^2(\Omega)}.
\end{aligned}
\end{equation}
Combining \eqref{eq:thm3.5-10}--\eqref{eq:thm3.5-12}, we obtain
\begin{equation}\label{eq:thm3.5-13}
\|U^{n+1}-W_1^n(\tau)\|_{L^2(\Omega)}
\leq \frac{3}{2}\tau\sqrt{r-1}\|\mathbf{C}_{r-1}^n\|_{L^2(\Omega)}.
\end{equation}
Thus, from \eqref{eq:thm3.5-13} and \eqref{eq:thm3.5-9}, we can infer that if the following inequality
\begin{equation}\label{eq:thm3.5-14}
\frac 72\tau\sqrt{r-1}\|\mathbf{C}_{r-1}^n\|_{L^2(\Omega)}\leq\|W_1^n(\tau)-U^n\|_{L^2(\Omega)}.
\end{equation}
holds, then $E(U^{n+1})\leq E(U^n)$. The right-hand side of the inequality \eqref{eq:thm3.5-14} is independent of $r$. When $r=1$, the left-hand side of the inequality \eqref{eq:thm3.5-14} is equal to $0$. Thus, the inequality holds for any $\tau>0$. 

Suppose that there exists a certain constant $\tau_{\max,r}>0$, such that the inequality \eqref{eq:thm3.5-14} holds for $\tau<\tau_{\max,r}$. Then, we will prove that there exists a certain constant $\tau_{\max,r+1}>0$, such that
\begin{equation}\label{eq:thm3.5-14_2}
\frac 72\tau\sqrt{r}\|\mathbf{C}_{r}^n\|_{L^2(\Omega)}\leq\|W_1^n(\tau)-U^n\|_{L^2(\Omega)}.
\end{equation}
From Lemma \ref{lem:martix_C}, we know that
\begin{equation*}
\begin{split}
\frac 72\tau\sqrt{r}\|\mathbf{C}_r^n\|_{L^2(\Omega)}\leq& \frac{7\kappa\tau\sqrt{r}}{\sigma_{\min}(V_r)}\Big(\sum_{k=1}^r\|W_r^n(a_{r,k}\tau)-U^n\|_{L^2(\Omega)}^2\Big)^{\frac 12}.
\end{split}
\end{equation*}
According to the explicit formula of rescaled ETDRK$r$ \eqref{eq:RETDRKr}, for $k=1,2,\ldots,r$, we have
\begin{equation}\label{eq:thm3.5-15}
\begin{split}
\|W_r^n(a_{r,k}\tau)-U^n\|_{L^2(\Omega)}
\leq&\|W_1^n(a_{r,k}\tau)-U^n\|_{L^2(\Omega)}+\|\tau\phi_1(a_{r,k}\tau\mL_\kappa)\left((\alpha_{r-1}^n-1)\mNk[U^n]\right)\|_{L^2(\Omega)}\\
&+\Big\|\tau\sum_{j=1}^{r-1}j!(a_{r,k})^{j+1}\phi_{j+1}(a_{r,k}\tau\mL_{\kappa})\left(\alpha_{r-1}^nC_{r-1,j}^n\right)\Big\|_{L^2(\Omega)}.
\end{split}
\end{equation}
Using the explicit formula of ETDRK1 method \eqref{eq:ETDRK1} and Lemma \ref{lem:phi}, we have
\begin{equation}\label{eq:thm3.5-16}
\begin{aligned}
\|W_1^n(a_{r,k}\tau)-U^n\|_{L^2(\Omega)}=&\|e^{a_{r,k}\tau\mL_\kappa}U^n+(e^{a_{r,k}\tau\mL_\kappa}-\mI)\mL_\kappa^{-1}\mNk[U^n]-U^n\|_{L^2(\Omega)}\\
=&\|a_{r,k}\tau\phi_1(a_{r,k}\tau\mL_\kappa)(\mL_\kappa U^n+\mNk[U^n])\|_{L^2(\Omega)}\\
\leq&\|\tau\phi_1(\tau\mL_\kappa)(\mL_\kappa U^n+\mNk[U^n])\|_{L^2(\Omega)}\\
=&\|W_1^n(\tau)-U^n\|_{L^2(\Omega)},
\end{aligned}
\end{equation}
for $k=1,\ldots,r$. Similar to \eqref{eq:thm3.5-11} and \eqref{eq:thm3.5-12}, we obtain
\begin{equation}\label{eq:thm3.5-17}
\|\tau\phi_1(a_{r,k}\tau\mL_\kappa)\left((\alpha_{r-1}^n-1)\mNk[U^n]\right)\|_{L^2(\Omega)}\leq\tau\sqrt{r-1}\|\mathbf{C}_{r-1}^n\|_{L^2(\Omega)},
\end{equation}
and
\begin{equation}\label{eq:thm3.5-18}
\Big\|\tau\sum_{j=1}^{r-1}j!(a_{r,k})^{j+1}\phi_{j+1}(a_{r,k}\tau\mL_{\kappa})\left(\alpha_{r-1}^nC_{r-1,j}^n\right)\Big\|_{L^2(\Omega)}\leq\frac 12\tau\sqrt{r-1}\|\mathbf{C}_{r-1}^n\|_{L^2(\Omega)},
\end{equation}
for $k=1,\ldots,r$. Combining \eqref{eq:thm3.5-15}--\eqref{eq:thm3.5-18} yields
\begin{equation*}
\Big(\sum_{k=1}^r\|W_r^n(a_{r,k}\tau)-U^n\|_{L^2(\Omega)}^2\Big)^{\frac 12}
\leq \sqrt{r}\Big(\|W_1^n(\tau)-U^n\|_{L^2(\Omega)}+\frac{3}{2}\tau\sqrt{r-1}\left\|\mathbf{C}_{r-1}^n\right\|_{L^2(\Omega)}\Big).
\end{equation*}
Using Lemma \ref{lem:martix_C} and combining the inductive hypothesis \eqref{eq:thm3.5-14}, we obtain
\begin{equation*}
\frac 72\tau\sqrt{r}\|\mathbf{C}_r^n\|_{L^2(\Omega)}\leq\frac{10\kappa\tau r}{\sigma_{\min}(V_r)}\|W_1^n(\tau)-U^n\|_{L^2(\Omega)}.
\end{equation*}
In order to make \eqref{eq:thm3.5-14_2} hold, it is sufficient to satisfy the following inequality:
\begin{equation*}
\frac{10\kappa\tau r}{\sigma_{\min}(V_r)}\leq 1.
\end{equation*}
Thus, the inequality \eqref{eq:thm3.5-14_2} holds for any
\begin{equation*}
\tau\leq\tau_{\max,r+1}:=\min\Big\{\tau_{\max,r},\frac{\sigma_{\min}(V_r)}{10\kappa r}\Big\}.
\end{equation*}
Finally, by induction, we finish the proof.
\end{proof}

\subsection{Convergence analysis}
By Theorem \ref{thm:mbp}, we have the exact solution of \eqref{eq:MAC} $\|U(t_n+s)\|_\tF\leq \sqrt{m_2}$, {for any $(s,\bx)\in[0,\tau]\times\Omega$.} Then, using Lemma \ref{Nkappa}, we can obtain that
\begin{equation}\label{eq:sec3.3-0}
\|\mNk[U(t_n+s,\bx)]\|_\tF\leq\kappa\sqrt{m_2}, \quad \forall~{(s,\bx)\in[0,\tau]\times\Omega,}
\end{equation}
for $\kappa\geq \max\{\frac 32 m_2-1,2\}$.

According to the definition of scaling factor $\alpha_r^n(\bx)$ and using \eqref{eq:sec3.3-0}, we can obtain the following results about $P_r^n(s,\bx)$ and $\widetilde{P}_r^n(s,\bx)$.

\begin{lemma}\label{lem:P-P}
Let $U(t,\bx)$ be the exact solution of the generalized matrix-valued Allen--Cahn
equation satisfying $\|U(t)\|_{\mathcal X}\le\sqrt{m_2}$ for
$t\in[t_n,t_{n+1}]$. If $\max\limits_{s\in[0,\tau]}\|P_r^n(s,\bx)\|_\tF\leq\kappa\sqrt{m_2}$, then $\alpha_r^n(\bx)=1$, i.e., $\widetilde{P}_r^n(s,\bx)=P_r^n(s,\bx)$. If $\max\limits_{s\in[0,\tau]}\|P_r^n(s,\bx)\|_\tF>\kappa\sqrt{m_2}$, $\kappa\geq 3m_2+1$ and $\{s_k=k\frac{\tau}{r}\}_{k=0}^r$ is the set of uniform nodes in $[0,\tau]$, then
for any $s\in[0,\tau]$ we have
\begin{equation}\label{eq:lem3.6-0}
\|P_r^n(s,\bx)-\widetilde{P}_r^n(s,\bx)\|_\mX\leq 2\kappa r^r\sum_{k=0}^r\|W_r^n(s_k,\bx)-U(t_n+s_k,\bx)\|_\mX+C_r\tau^{r+1},
\end{equation}
where $C_r$  is a constant depending on $\kappa$, $m_2$, $r$, $t_n$ and the $C^{r+1}([0,T];\mX)$-norm of $U$, but is independent of $\tau$.
\end{lemma}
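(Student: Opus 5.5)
The first case is immediate: if $\max_{s}\|P_r^n(s,\bx)\|_\tF\le\kappa\sqrt{m_2}$, the definition of $\alpha_r^n(\bx)$ gives $\alpha_r^n=1$. For the second case, the computation in \eqref{eq:thm3.5-7-2} gives, pointwise in $\bx$,
\begin{equation*}
\|P_r^n(s,\bx)-\widetilde P_r^n(s,\bx)\|_\tF=(1-\alpha_r^n)\|P_r^n(s,\bx)\|_\tF\le \max_{s'\in[0,\tau]}\|P_r^n(s',\bx)\|_\tF-\kappa\sqrt{m_2}.
\end{equation*}
The task is therefore to bound the excess of $\|P_r^n\|_\tF$ over $\kappa\sqrt{m_2}$. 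By \eqref{eq:sec3.3-0}, $\|\mNk[U(t_n+s',\bx)]\|_\tF\le\kappa\sqrt{m_2}$ for all $s'$, so
\begin{equation*}
\max_{s'}\|P_r^n(s')\|_\tF-\kappa\sqrt{m_2}\le \|P_r^n(s^*)-\mNk[U(t_n+s^*)]\|_\tF,
\end{equation*}
where $s^*$ is a maximizer. It then suffices to bound $\sup_{s}\|P_r^n(s)-\mNk[U(t_n+s)]\|_\tF$.

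Next I will introduce the exact-data interpolant $Q_r^n(s,\bx)=\sum_{k=0}^r\ell_{r,k}(s)\mNk[U(t_n+s_k,\bx)]$ on the uniform nodes $s_k=k\tau/r$, and split
\begin{equation*}
P_r^n-\mNk[U(t_n+\cdot)]=(P_r^n-Q_r^n)+(Q_r^n-\mNk[U(t_n+\cdot)]).
\end{equation*}
For the first piece, $P_r^n-Q_r^n=\sum_k\ell_{r,k}(s)\big(\mNk[W_r^n(s_k)]-\mNk[U(t_n+s_k)]\big)$. Both arguments lie in the ball of radius $\sqrt{m_2}$: $W_r^n(s_k)$ by Theorem \ref{thm:MBP} (the stage values are rescaled ETDRK values) and the exact solution by hypothesis. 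Hence Lemma \ref{Nkappa} with $\kappa\ge3m_2+1$ bounds each difference by $2\kappa\|W_r^n(s_k)-U(t_n+s_k)\|_\tF$. I then need $\sup_{s\in[0,\tau]}|\ell_{r,k}(s)|\le r^r$. Writing $s=\theta\tau/r$ with $\theta\in[0,r]$, we have $|\ell_{r,k}|=\prod_{j\ne k}|\theta-j|/|k-j|$. The numerator is at most $r^r$ and the denominator is $k!(r-k)!\ge1$, which gives the bound. Taking the sup over $\bx$ yields the first term of \eqref{eq:lem3.6-0}.

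For the second piece, I will apply the standard Lagrange interpolation remainder componentwise to $g(s)=\mNk[U(t_n+s,\bx)]$:
\begin{equation*}
\|g(s)-Q_r^n(s)\|_\tF\le \frac{1}{(r+1)!}\prod_k|s-s_k|\,\sup\|g^{(r+1)}\|_\tF\le \frac{\tau^{r+1}}{(r+1)!}\sup\|g^{(r+1)}\|_\tF.
\end{equation*}
Here $g^{(r+1)}$ is bounded by Fa\`a di Bruno and the polynomial (cubic) structure of $\mNk$ in terms of $\kappa$, $m_2$ and $\|U\|_{C^{r+1}([0,T];\mX)}$. This gives $C_r\tau^{r+1}$. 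For matrix-valued $g$ the remainder formula needs a mean-value point per entry, so I will apply it entrywise and sum, which only changes the constant by a factor depending on $m_1m_2$; that factor is absorbed into $C_r$.

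I expect the main obstacle to be the regularity bookkeeping, namely justifying that $U\in C^{r+1}$ in time with values in $\mX$ so that $g^{(r+1)}$ is controlled uniformly in $\bx$. This is an assumption encoded in $C_r$ and I will take it as given. A minor point is that $P_r^n$ is defined through nodes $a_{r,k}$ while the lemma uses uniform nodes. I will follow the paper's Lagrange representation with $s_k=k\tau/r$, that is, take $a_{r,k}=k/r$.
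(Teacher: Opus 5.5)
Your proposal is correct and follows essentially the same route as the paper: you bound $\|P_r^n-\widetilde P_r^n\|$ by the excess $\max_s\|P_r^n\|_\tF-\kappa\sqrt{m_2}$, dominate that excess by $\max_s\|P_r^n(s)-\mNk[U(t_n+s)]\|$ using \eqref{eq:sec3.3-0}, and split through the exact-data Lagrange interpolant, using $|\ell_{r,k}|\le r^r$ with the Lipschitz bound of Lemma \ref{Nkappa} for one piece and the interpolation remainder for the other. You also make explicit two points the paper leaves implicit: that the stage values $W_r^n(s_k)$ lie in the $\sqrt{m_2}$-ball, so Lemma \ref{Nkappa} applies, and that the nodes must be taken as $a_{r,k}=k/r$.
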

\begin{proof}
According to the definition of scaling factor $\alpha_{r}^n(\bx)=\min\left\{\frac{\kappa\sqrt{m_2}}{\max\limits_{s\in[0,\tau]}\|P_r^n(s,\bx)\|_\tF},1\right\}$ for $\bx\in\Omega$ and polynomial $\widetilde{P}_r^n(s,\bx)=\alpha_r^n(\bx)P_r^n(s,\bx)$, if $\max\limits_{s\in[0,\tau]}\|P_r^n(s,\bx)\|_\tF\leq\kappa\sqrt{m_2}$, then $\alpha_r^n(\bx)=1$, i.e., $\widetilde{P}_r^n(s,\bx)=P_r^n(s,\bx)$. Moreover, if $\max\limits_{s\in[0,\tau]}\|P_r^n(s,\bx)\|_\tF>\kappa\sqrt{m_2}$, we can obtain
\begin{equation}\label{eq:lem3.6-1}
\begin{aligned}
\|P_r^n(s,\bx)-\widetilde{P}_r^n(s,\bx)\|_\tF&=\frac{\max\limits_{s\in[0,\tau]}\|P_r^n(s,\bx)\|_\tF-\kappa\sqrt{m_2}}{\max\limits_{s\in[0,\tau]}\|P_r^n(s,\bx)\|_\tF}\|P_r^n(s,\bx)\|_\tF\\
&\leq\max\limits_{s\in[0,\tau]}\|P_r^n(s,\bx)\|_\tF-\kappa\sqrt{m_2}.
\end{aligned}
\end{equation}
Then, combining \eqref{eq:sec3.3-0} and \eqref{eq:lem3.6-1} yields
\begin{gather}\label{eq:lem3.6-2}
\|P_r^n(s)-\widetilde{P}_r^n(s)\|_\mX\leq\max\limits_{s\in[0,\tau]}\|P_r^n(s)-\mNk[U(t_n+s)]\|_\mX\\
\leq\max\limits_{s\in[0,\tau]}\Big(\Big\|P_r^n(s)-\prod_{r}\mNk[U(t_n+s)]\Big\|_\mX+\Big\|\prod_{r}\mNk[U(t_n+s)]-\mNk[U(t_n+s)]\Big\|_\mX\Big),\no  
\end{gather}
where $\prod_{r}$ represents the Lagrange interpolation operator. For the Lagrange basis function $\ell_{r,k}(s)$, it holds that
\begin{equation}\label{eq:lem3.6-3}
|\ell_{r,k}(s)|=\Bigg|\prod_{\substack{i=0\\i\neq k}}^r\frac{s-s_i}{s_k-s_i}\Bigg|\leq {\Big(\frac{\tau}{\tau/r}\Big)^r = r^r}.
\end{equation}
Using Lemma \ref{Nkappa} and inequality \eqref{eq:lem3.6-3}, we obtain
\begin{equation}\label{eq:lem3.6-4}
\begin{aligned}
\Big\|P_r^n(s,\bx)-\prod_{r}\mNk[U(t_n+s)]\Big\|_\mX
\leq&\sum_{k=0}^r|\ell_{r,k}(s)|\|\mNk[W_r^n(s_k)]-\mNk[U(t_n+s_k)]\|_\mX\\
\leq& 2\kappa r^r\sum_{k=0}^r\|W_r^n(s_k)-U(t_n+s_k)\|_\mX.
\end{aligned}
\end{equation}

By Lemma \ref{Nkappa} and Taylor's formula, we have
\begin{equation}\label{eq:lem3.6-5}
\begin{split}
&\Big\|\prod_{r}\mNk[U(t_n+s)]-\mNk[U(t_n+s)]\Big\|_\mX\\
=&\Big\|\sum_{k=0}^r\ell_{r,k}(s)\mNk[U(t_n+s_k)]-\mNk[U(t_n+s)]\Big\|_\mX\\
\leq&\hat{C}\sum_{k=0}^{r+1}\Big(\sum_{i=1}^{m_1}\sum_{j=1}^{m_2}\Big(\max_{x\in\overline{\Omega}}|U_{i,j}^{(r+1)}(t_n+\xi_{ij}^k)|\Big)^2\Big)^{\frac 12}\tau^{r+1}+\tilde{C}(\tau^{r+2}+\cdots+\tau^{3(r+1)})\\
\leq& C\tau^{r+1},
\end{split}
\end{equation}
where $U=(U_{ij})_{m_1\times m_2}$ and $\xi_{ij}^k\in[0,\tau]$ for $k=0,\ldots,r+1$, $U_{ij}^{(r)}(t)$ denotes the $r$th-derivative of $U_{ij}(t)$ for $i=1,\ldots,m_1$ and $j=1,\ldots,m_2$, the constant $\hat{C}$ depends on $\kappa$, $m_2$ and $r$, $\tilde{C}$ depends on $m_2$ and $C^{r+1}([0,T];\mX)$-norm of $U$, $C$ depends on $\kappa$, $m_1$, $m_2$, $r$, $t_n$, and $C^{r+1}([0,T];\mX)$-norm of $U$.

Finally, substituting \eqref{eq:lem3.6-4} and \eqref{eq:lem3.6-5} into \eqref{eq:lem3.6-2} yields \eqref{eq:lem3.6-0}.
\end{proof}

\begin{remark}
The bound in \eqref{eq:lem3.6-3} is a simple estimate for uniform interpolation nodes and is not sharp with respect to $r$. Using Chebyshev--Lobatto nodes could improve the corresponding interpolation bound to $\mathcal O(\log r)$ \cite{trefethen2019approximation,brutman1997lebesgue}. In the present convergence analysis, $r$ is fixed while $\tau$ tends to zero, so the choice of nodes affects only the $r$-dependent constants and does not change the designed temporal order.
\end{remark}

\begin{theorem}[Convergence analysis]\label{thm:conv}
Assume that $\kappa\geq3m_2+1$ and $\|U_0\|_\tF\leq \sqrt{m_2}$. Let $U(t)\in C^{r+1}([0,T];\mathcal X)$ be the exact solution of \eqref{eq:MAC} on $[0,T]$, and let $\{U^n\}_{n\ge0}$ be the numerical solution generated by the rescaled ETDRK$r$ scheme with uniform time step $\tau = \frac{T}{N}$. Then, we have
\begin{equation*}
\|U^n-U(t_n)\|_\mX\leq K_r\tau^re^{\kappa T_nJ_r},
\end{equation*}
for any $\tau>0$, where the constants $K_r$,$J_r>0$ are independent of $\tau$.
\end{theorem}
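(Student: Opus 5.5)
The plan is an induction on the order $r$, combined with a discrete Gronwall argument in $n$. The Lipschitz bound of Lemma~\ref{Nkappa} and the rescaling estimate of Lemma~\ref{lem:P-P} supply the stability, while the MBP (Theorems~\ref{thm:mbp} and~\ref{thm:MBP}) keeps every stage value in $\mathcal X_{m_2}$ so that these lemmas apply.

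Set $e^n:=U^n-U(t_n)$. On $[t_n,t_{n+1}]$ the exact solution satisfies the Duhamel formula \eqref{eq:mac-solution} with $W^n$ replaced by $U(t_n+\cdot)$. Subtracting this from \eqref{eq:RETDRKr} (with $r-1$ in place of $r$) gives
\begin{equation*}
e^{n+1}=e^{\tau\mL_\kappa}e^n+\int_0^\tau e^{(\tau-s)\mL_\kappa}\big(\widetilde P_{r-1}^n(s)-\mNk[U(t_n+s)]\big)\ds .
\end{equation*}
Write $\widetilde P_{r-1}^n-\mNk[U(t_n+s)]=(\widetilde P_{r-1}^n-P_{r-1}^n)+(P_{r-1}^n-\Pi_{r-1}\mNk[U(t_n+\cdot)])+(\Pi_{r-1}\mNk[U(t_n+\cdot)]-\mNk[U(t_n+s)])$.
\begin{itemize}
\item The first term is handled by Lemma~\ref{lem:P-P}.
\item The second is handled by \eqref{eq:lem3.6-4}.
\item The third is the interpolation remainder of order $\tau^{r}$, by \eqref{eq:lem3.6-5} with degree $r-1$.
\end{itemize}
All three reduce to $\|e^n\|_\mX$, the stage errors $\|W_{r-1}^n(s_k)-U(t_n+s_k)\|_\mX$, and $C\tau^{r}$. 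Using Lemma~\ref{lambda-Delta}, so that $\|e^{t\mL_\kappa}\|\le e^{-\kappa t}\le 1$, and integrating over $[0,\tau]$, I will obtain
\begin{equation*}
\|e^{n+1}\|_\mX\le \|e^n\|_\mX+ 4\kappa (r-1)^{r-1}\tau\sum_{k}\|W_{r-1}^n(s_k)-U(t_n+s_k)\|_\mX + C\tau^{r+1}.
\end{equation*}
Note that the $k=0$ stage error is exactly $e^n$.

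The key intermediate bound is a local stage estimate: for the ETDRK$q$ stage at any $s\in[0,\tau]$,
\begin{equation*}
\|W_q^n(s)-U(t_n+s)\|_\mX\le (1+J_q'\kappa\tau)\|e^n\|_\mX+K_q'\tau^{q+1}.
\end{equation*}
I will prove this by induction on $q$. For $q=1$, the frozen nonlinearity $\mNk[U^n]$ differs from $\mNk[U(t_n+s)]$ by at most $2\kappa(\|e^n\|_\mX+C\tau)$, by Lemma~\ref{Nkappa} and the smoothness of $U$. For $q\to q+1$, repeat the splitting above on $[0,s]$, use the inductive bound for the stages of order $q$ and the $\tau^{q+1}$ interpolation error, and use that $\tau\cdot\tau^{q}$-type contributions produce $\tau^{q+2}$. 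The Lipschitz constant $2\kappa$ from Lemma~\ref{Nkappa} and the Lebesgue-type constant $q^q$ from \eqref{eq:lem3.6-3} combine into $J_q'$.

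Substituting the stage estimate with $q=r-1$ into the one-step recursion gives $\|e^{n+1}\|_\mX\le(1+\kappa J_r\tau)\|e^n\|_\mX+K_r\tau^{r+1}$. Since $e^0=0$, iteration yields $\|e^n\|_\mX\le K_r\tau^{r}\,t_n e^{\kappa J_r t_n}$; absorbing $T$ into $K_r$ gives the claim, where $T_n$ in the statement is read as $t_n$.

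The main obstacle is the rescaling term. Lemma~\ref{lem:P-P} gives only a bound for the whole polynomial, in terms of stage errors at the uniform nodes $s_k$, and it requires the exact solution to satisfy the MBP. I must make sure the stage nodes $a_{r,k}$ used in the scheme coincide with, or are controlled by, these nodes, and that the rescaling factor $\alpha^n(\bx)$, which varies pointwise in $\bx$, is compatible with the sup-norm in $\mX$. The proof of Lemma~\ref{lem:P-P} already works pointwise in $\bx$ and then takes the maximum, so I expect this to go through. The remaining care is bookkeeping: keeping the constants independent of $\tau$, which holds because the estimates need no step-size restriction and the MBP holds unconditionally.
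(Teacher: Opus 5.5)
Your proposal is correct and follows essentially the same route as the paper. It uses the same three-way splitting of $\widetilde P-\mNk[U]$ via Lemma~\ref{lem:P-P}, \eqref{eq:lem3.6-4} and \eqref{eq:lem3.6-5}, the same induction on the order, in which the lower-order one-step bound is applied to the stages starting from the current $U^n$ (the paper phrases this as replacing $\tau$ by $s_k$ in \eqref{eq:thm3.7-4}), and the same discrete Gronwall iteration. The only bookkeeping difference is the amplification factor: it naturally comes out as a polynomial $1+\sum_i C_{r,i}(\kappa\tau)^i$, which the paper keeps and bounds by $e^{J_r\kappa\tau}$, whereas your linear form $1+J_r\kappa\tau$ requires absorbing the higher powers using $\tau\le T$.
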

\begin{proof}
Let $e_r^n:=U_r^n-U(t_n)$ denote the error of the rescaled ETDRK$r$ scheme. We first prove the case of the ETDRK$1$ method. Subtracting \eqref{eq:MAC_ka} from \eqref{eq:MAC-W} yields
\begin{equation}\label{eq:thm3.7-1}
\begin{aligned}
e_1^{n+1}=e^{\tau\mathcal{L}_\kappa}e_1^n+\int_0^{\tau}e^{(\tau-s)\mathcal{L}_\kappa}\left(\mNk[U^n]-\mNk[U(t_n)]+R_1(s)\right)\ds,
\end{aligned}
\end{equation}
where $R_1(s)=\mNk[U(t_n)]-\mNk[U(t_n+s)],~s\in [0,\tau]$.
Then, taking the supremum norm $\|\cdot\|_{\mathcal{X}}$ to $R_1(s)$ and using Lemma \ref{Nkappa}, we derive
\begin{equation}\label{eq:thm3.7-2}
\begin{aligned}
\|R_1(s)\|_{\mathcal{X}}&=\max _{\bx  \in \overline{\Omega}}\|\mNk[U(t_n)]-\mNk[U(t_n+s)]\|_\tF\\
&\leq2\kappa\max _{\bx \in \overline{\Omega}}\|U(t_n)-U(t_n+s)\|_\tF\\
&\leq2\kappa\left(\sum_{i=1}^{m_1}\sum_{j=1}^{m_2} \left(\max_{\bx \in \overline{\Omega}}\left|U_{ij}^{'}(t_n+\xi_{ij})\right|\right)^2\right)^{\frac 12}s\\
&\leq C\sqrt{m_1m_2}\kappa\tau,
\end{aligned}
\end{equation}
where $U=(U_{ij})_{m_1\times m_2}$, $\xi_{ij}\in[0,\tau]$, and the constant $C$ is the $C^1([0,T];\mathcal{X})$-norm of $U$. Similarly, according to Theorem \ref{thm:MBP} and Lemma \ref{Nkappa}, we have $\|U^n\|_\tF\leq\sqrt{m_2}$ and 
\begin{equation}\label{eq:thm3.7-3}
\begin{aligned}
\|\mNk[U^n]-\mNk[U(t_n)]\|_{\mathcal{X}}\leq2\kappa\|U^n-U(t_n)\|_{\mathcal{X}}=2\kappa\|e_1^n\|_{\mathcal{X}}.
\end{aligned}
\end{equation}
Taking the supremum norm $\|\cdot\|_{\mathcal{X}}$ to \eqref{eq:thm3.7-1} and using \eqref{eq:thm3.7-2}--\eqref{eq:thm3.7-3} and Lemma \ref{lambda-Delta}, we can derive
\begin{equation*}
\begin{aligned}
\|e_1^{n+1}\|_{\mathcal{X}}\leq& e^{-\kappa\tau}\|e_1^n\|_{\mathcal{X}}+\int_0^{\tau}e^{-\kappa(\tau-s)}(\|\mNk[U^n]-\mNk[U(t_n)]\|_{\mathcal{X}}+\|R_1(s)\|_{\mathcal{X}})\ds\\
\leq&e^{-\kappa\tau}\|e_1^n\|_{\mathcal{X}}+\left(2\kappa\|e_1^n\|_{\mathcal{X}}+C\sqrt{m_1m_2}\kappa\tau\right)\int_0^{\tau}e^{-\kappa(\tau-s)}\ds\\
=&e^{-\kappa\tau}\|e_1^n\|_{\mathcal{X}}+\left(2\kappa\|e_1^n\|_{\mathcal{X}}+C\sqrt{m_1m_2}\kappa\tau\right)\frac{1-e^{-\kappa\tau}}{\kappa}\\
\leq&(1+\kappa\tau)\|e_1^n\|_{\mathcal{X}}+C\sqrt{m_1m_2}\kappa\tau^2,
\end{aligned}
\end{equation*}
where we use $1-e^{-a}\leq a$ for any $a>0$.

Supposed that there exist positive constants $M_r$, $C_{r,1}$, \ldots, $C_{r,r}$ independent of $\tau$, such that 
\begin{equation}\label{eq:thm3.7-4}
\begin{aligned}
\|e_r^{n+1}\|_\mX\leq&(1+C_{r,1}\kappa\tau+\ldots+C_{r,r}\kappa^r\tau^r)\|e_r^n\|_\mX+M_r\tau^{r+1}.
\end{aligned}
\end{equation}
Then, we will prove that there exist positive constants $M_{r+1}$, $C_{r+1,1}$, \ldots, $C_{r+1,r+1}$ independent of $\tau$, such that 
\begin{equation}\label{eq:error_r+1}
\begin{aligned}
\|e_{r+1}^{n+1}\|_\mX\leq\Big(1+C_{r+1,1}\kappa\tau+\ldots+C_{r+1,r+1}\kappa^{r+1}\tau^{r+1}\Big)\|e_{r+1}^n\|_\mX+M_{r+1}\tau^{r+2}.
\end{aligned}
\end{equation}
Subtracting \eqref{eq:mac-solution} from \eqref{eq:RETDRKr} yields
\begin{equation}\label{eq:thm3.7-5}
e_{r+1}^{n+1}=e^{\tau\mL_{\kappa}}e_{r+1}^n+\int_0^{\tau}e^{(\tau-s)\mL_{\kappa}}\left(\widetilde{P}_{r}^n (s)-\mNk[U(t_n+s)]\right)\ds.
\end{equation}
Taking the supremum norm $\|\cdot\|_\mX$ to \eqref{eq:thm3.7-5} and using Lemma \ref{lambda-Delta}, we derive
\begin{equation}\label{eq:thm3.7-6}
\begin{aligned}
\|e_{r+1}^{n+1}\|_\mX=&\Big\|e^{\tau\mL_{\kappa}}e_{r+1}^n+\int_0^{\tau}e^{(\tau-s)\mL_{\kappa}}\Big(\widetilde{P}_{r}^n (s)-\mNk[U(t_n+s)]\Big)\ds\Big\|_\mX\\
\leq& e^{-\kappa\tau}\left\|e_{r+1}^n\right\|_\mX+\int_0^{\tau}e^{-\kappa(\tau-s)}\|\widetilde{P}_{r}^n (s)-\mNk[U(t_n+s)]\|_\mX \ds\\
\leq& e^{-\kappa\tau}\left\|e_{r+1}^n\right\|_\mX+\int_0^{\tau}e^{-\kappa(\tau-s)}\Big(\Big\|P_{r}^n (s)-\prod_{r}\mNk[U(t_n+s)]\Big\|_\mX +R_{r+1}^{'}(s)\Big)\ds,
\end{aligned}
\end{equation}
where $\prod_{r}$ represents the Lagrange interpolation operator corresponding to the uniform nodes $\{s_k\}_{k=0}^r$ and $R_{r+1}^{'}(s)$ is the truncation error given by 
\begin{equation*}
R_{r+1}^{'}(s)=\Big\|\widetilde{P}_{r}^n (s)-P_{r}^n(s)\Big\|_\mX+\Big\|\prod_{r}\mNk[U(t_n+s)]-\mNk[U(t_n+s)]\Big\|_\mX.
\end{equation*}
By Lemma \ref{lem:P-P} and \eqref{eq:lem3.6-5}, we have
\begin{equation}\label{eq:thm3.7-7}
R_{r+1}^{'}(s)\leq2\kappa r^r\sum_{k=0}^r\|W_r^n(s_k)-U(t_n+s_k)\|_\mX+\widetilde{C_r}\tau^{r+1},
\end{equation}
where $\widetilde{C_r}$ depends on $\kappa$, $m_2$, $r$, $t_n$, and $C^{r+1}([0,T];\mX)$-norm of $U$, but independent of $\tau$. Since we consider the $(r+1)$th method, the error at $t_n$ is $e_{r+1}^n$. We can replace $\tau$ with $s_k$ in \eqref{eq:thm3.7-4} and obtain that
\begin{equation}\label{eq:thm3.7-8}
\|W_r^n(s_k)-U(t_n+s_k)\|_\mX\leq\Big(1+\sum_{i=1}^rC_{r,i}\kappa^is_k^i\Big)\|e_{r+1}^n\|_\mX+M_rs_k^{r+1},
\end{equation}
for $k=0,\ldots,r$. Substituting \eqref{eq:lem3.6-4}, \eqref{eq:thm3.7-7} and \eqref{eq:thm3.7-8} into \eqref{eq:thm3.7-6} and using inequalities $1-e^{-a}\leq a$ for any $a\geq 0$, we have
\begin{equation*}
\begin{aligned}
\|e_{r+1}^{n+1}\|_\mX\leq& e^{-\kappa\tau}\left\|e_{r+1}^n\right\|_\mX+\frac{1-e^{-\kappa\tau}}{\kappa}\Big(4\kappa r^rM_r\sum_{k=0}^r\Big(\frac{k}{r}\Big)^{r+1}+\widetilde{C_r}\Big)\tau^{r+1}\\
&+\frac{1-e^{-\kappa\tau}}{\kappa}\Big(4\kappa r^r\sum_{k=0}^r\Big(1+\sum_{i=1}^rC_{r,i}\kappa^i\Big(k\frac{\tau}{r}\Big)^i\Big)\|e_{r+1}^n\|_\mX\Big)\\
\leq& e^{-\kappa\tau}\left\|e_{r+1}^n\right\|_\mX+\Big(4\kappa r^{r+1}M_r \int_{\frac{1}{r}}^{1+\frac{1}{r}}x^{r+1}\mathrm{d}x+\widetilde{C_r}\Big)\tau^{r+2}\\
&+\Big(4r^r(r+1)\kappa\tau+\sum_{i=1}^r 4r^{r+1}C_{r,i}(\kappa\tau)^{i+1}\int_{\frac{1}{r}}^{1+\frac{1}{r}}x^{i}\mathrm{d}x\Big)\left\|e_{r+1}^n\right\|_\mX\\
\leq&(1+C_{r+1,1}\kappa\tau+\cdots+C_{r+1,r+1}(\kappa\tau)^{r+1})\left\|e_{r+1}^n\right\|_\mX+M_{r+1}\tau^{r+2},
\end{aligned}
\end{equation*}
where $$M_{r+1}=\frac{4\kappa r^{r+1}M_r}{r+2}\Big(\Big(1+\frac{1}{r}\Big)^{r+2}-\Big(\frac{1}{r}\Big)^{r+2}\Big)+\widetilde{C_r},\qquad C_{r+1,1}=4r^r(r+1),$$
and 
$$C_{r+1,k}=\frac{4r^{r+1}C_{r,k-1}}{k}\Big(\Big(1+\frac{1}{r}\Big)^{k}-\Big(\frac{1}{r}\Big)^{k}\Big)$$
for $k=2,\ldots,r+1$. We proved that there exist positive constants $M_{r+1}$, $C_{r+1,1}$, \ldots,$C_{r+1,r+1}$ independent of $\tau$ such that \eqref{eq:error_r+1} holds. Therefore, our assumption is valid.

Finally, by induction for \eqref{eq:thm3.7-4}, we get
\begin{equation*}
\begin{aligned}
\|e_{r}^{n+1}\|_\mX\leq& \Big(1+\sum_{i=1}^{r}C_{r,i}\kappa^{i}\tau^i\Big)^n\|e_{r}^0\|_\mX+M_r\tau^{r+1}\sum_{k=0}^{n-1}\Big(1+\sum_{i=1}^{r}C_{r,i}\kappa^{i}\tau^i\Big)^k\\
=&M_r\tau^{r+1}\frac{\Big(1+\sum_{i=1}^{r}C_{r,i}\kappa^{i}\tau^i\Big)^n-1}{\sum_{i=1}^{r}C_{r,i}\kappa^{i}\tau^i}\leq K_r\tau^re^{T_nJ_r\kappa},
\end{aligned}
\end{equation*}
where $K_r=\frac{M_r}{\kappa C_{r,1}}$, $J_r=\max\limits_{1\leq k\leq r}k!C_{r,k}$ and $T_n=n\tau$.
\end{proof}

\section{Numerical Experiments}\label{sec4}
In this section, we present several numerical examples to verify the convergence, MBP preservation, and energy dissipation of the arbitrarily high-order rescaled ETDRK$r$ schemes for the generalized matrix-valued Allen--Cahn equation. The tests include the vector-valued case $(m_1,m_2)=(2,1)$ and two matrix-valued cases $(m_1,m_2)=(2,2)$ and $(3,2)$. For the vector-valued cases, we visualize the evolution of the vector field. For the matrix-valued cases, we simulate the evolution of the matrix field and its interface, defined as the zero-level set of its determinant. We also include a fully three-dimensional matrix-valued simulation to examine the behavior of the proposed schemes on a spatially three-dimensional domain. The products of matrix exponents with vectors are implemented by the fast Fourier transform.

We recall that our theoretical analysis of the rescaled ETDRK$r$ schemes establishes discrete energy dissipation under a time-step restriction $\tau_{\max,r}=\frac{1}{10\kappa}\min_{1\le j\le r-1}\frac{\sigma_{\min}(V_j)}{j}$ for $r\geq 3$ (see Theorem \ref{thm:energy}). In the numerical experiments, we do not explicitly enforce this constraint and also report results for relatively large time steps. In all tests, it can be observed that the discrete energy always decays monotonically, indicating that the theoretical time-step bound is a sufficient but not necessary condition and is unlikely to be optimal.
\subsection{Vector-valued case}
We first consider the vector-valued case $(m_1,m_2)=(2,1)$ of the generalized matrix-valued Allen--Cahn equation \eqref{eq:MAC} on $\Omega=[-\frac 12,\frac 12]^2$ with periodic boundary conditions and $\varepsilon=0.01$. The initial data are chosen as
\begin{equation}\label{eq:4.1}
U_0(x,y)=
\begin{bmatrix}
\cos(\alpha^2)\\
\sin(\alpha^2)
\end{bmatrix}
\end{equation}
where $\alpha$ denotes a uniformly distributed random variable. 

\subsubsection{Temporal convergence.}
We first test the temporal convergence of the rescaled ETDRK$r$ schemes for $r=3,4,5$ with $\kappa=5$. We take a sequence of time steps $\tau=0.1\times 2^{-k}$ with $k=0,1,\ldots,4$ to obtain the corresponding numerical solution at $t=1$. Furthermore, since the exact solution of \eqref{eq:MAC} is unavailable, we use a reference solution obtained with a very small time step $\tau=0.1\times 2^{-10}$. The $L^2$- and $L^{\infty}$-errors between these numerical solutions and the reference solution are shown in Table \ref{tab:1}. We observe that the convergence rates of the rescaled ETDRK$3$, ETDRK$4$, and ETDRK$5$ schemes are about $3$, $4$, and $5$, respectively, in agreement with Theorem \ref{thm:conv}.

\begin{table}[!htpb]
\centering
\normalsize
\caption{$L^2$- and $L^\infty$-errors and convergence rates of the rescaled ETDRK$r$ schemes ($r=3,4,5$) for the vector-valued Allen--Cahn equation \eqref{eq:MAC} at $t=1$.}
\label{tab:1}
\vspace{10pt}
\begin{tabular}{cccccc}
\hline
$r$&$\tau$ & $L^2$ error & Rate&$L^{\infty}$ error & Rate \\
\hline
\multirow{5}{*}{$r=3$}&
$0.1\times 2^{0}$&$2.43\times 10^{-2}$&-&$8.61\times 10^{-3}$&-\\
&$0.1\times 2^{-1}$&$3.76\times 10^{-3}$&2.68&$1.34\times 10^{-3}$&2.68\\
&$0.1\times 2^{-2}$&$5.26\times 10^{-4}$&2.84&$1.88\times 10^{-4}$&2.83\\
&$0.1\times 2^{-3}$&$6.95\times 10^{-5}$&2.92&$2.50\times 10^{-5}$&2.92\\
&$0.1\times 2^{-4}$&$8.94\times 10^{-6}$&2.96&$3.21\times 10^{-6}$&2.96\\
\hline
\multirow{5}{*}{$r=4$}&
$0.1\times 2^{0}$&$2.64\times 10^{-3}$&-&$9.64\times 10^{-4}$&-\\
&$0.1\times 2^{-1}$&$2.08\times 10^{-4}$&3.66&$7.69\times 10^{-5}$&3.65\\
&$0.1\times 2^{-2}$&$1.47\times 10^{-5}$&3.82&$5.45\times 10^{-6}$&3.82\\
&$0.1\times 2^{-3}$&$9.76\times 10^{-7}$&3.91&$3.62\times 10^{-7}$&3.91\\
&$0.1\times 2^{-4}$&$6.29\times 10^{-8}$&3.96&$2.34\times 10^{-8}$&3.95\\
\hline
\multirow{5}{*}{$r=5$}&
$0.1\times 2^{0}$&$2.40\times 10^{-4}$&-&$9.08\times 10^{-5}$&-\\
&$0.1\times 2^{-1}$&$9.58\times 10^{-6}$&4.64&$3.66\times 10^{-6}$&4.63\\
&$0.1\times 2^{-2}$&$3.39\times 10^{-7}$&4.82&$1.30\times 10^{-7}$&4.81\\
&$0.1\times 2^{-3}$&$1.13\times 10^{-8}$&4.90&$4.34\times 10^{-9}$&4.91\\
&$0.1\times 2^{-4}$&$3.86\times 10^{-10}$&4.88&$1.56\times 10^{-10}$&4.80\\
\hline
\end{tabular}
\end{table}

\subsubsection{Time-step robustness of the MBP and energy.} We examine the discrete MBP and energy dissipation for the rescaled ETDRK$r$ schemes with $r=3,4,5$. The terminal time is set to $T=100$ and the time steps are chosen as $\tau=0.25,0.5,1,2$. Figure \ref{fig:4.11} plots the evolution of the supremum norm and the discrete energy. For all combinations of $r$ and $\tau$, the value of $\|U^n\|_{\mathcal X}$ does not exceed $\sqrt{m_2}=1$, and the energy decreases monotonically.
\begin{figure}[!htpb]
\centering
\includegraphics[width=0.96\textwidth]{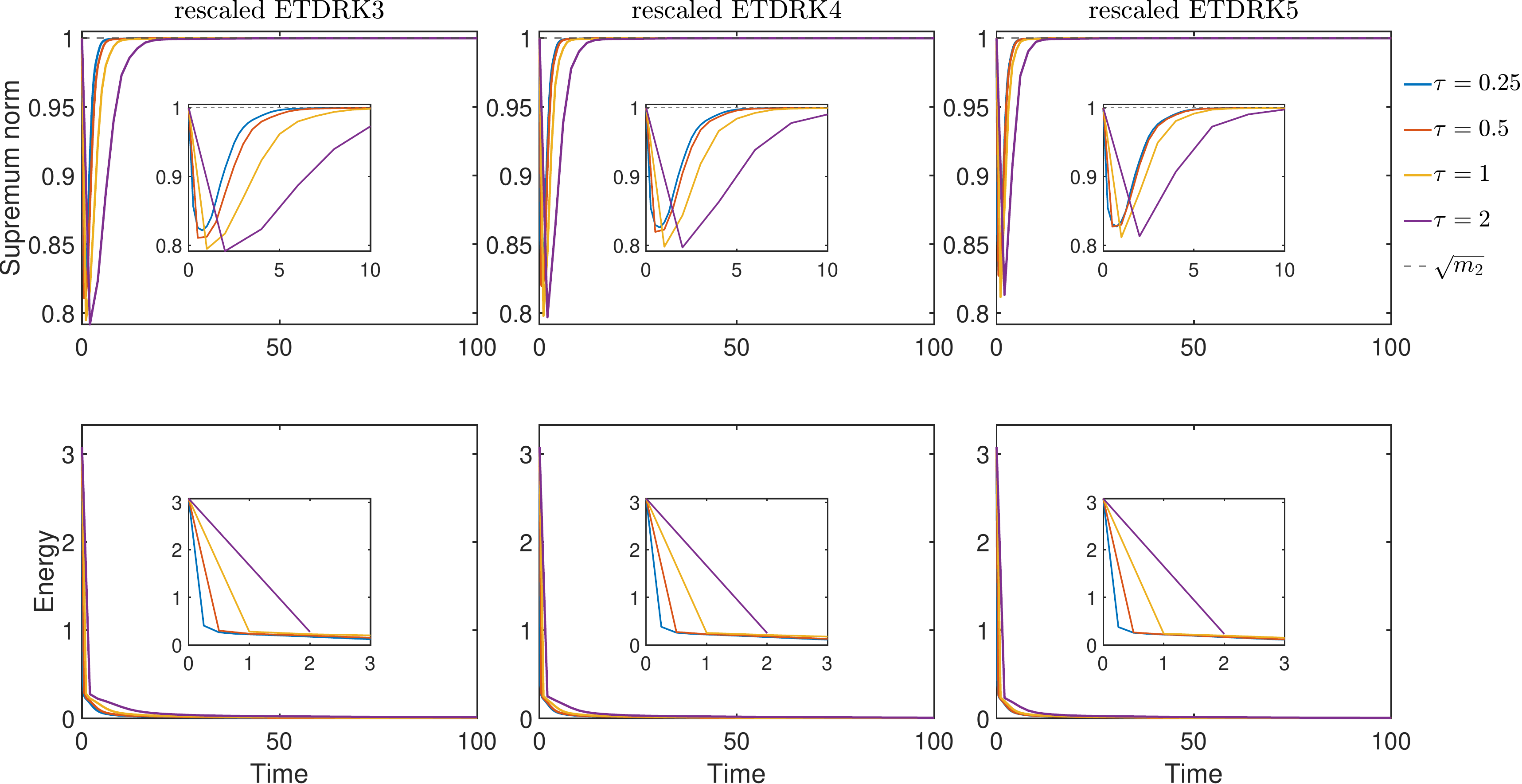}
\caption{Evolution of the supremum norm $\|\cdot\|_{\mathcal{X}}$ (upper row) and the discrete energy (lower row) for the rescaled ETDRK$r$ schemes ($r=3,4,5$) with initial condition \eqref{eq:4.1}. In each row, the three subfigures correspond to $r=3,4,5$ from left to right.}
\label{fig:4.11}
\end{figure}

\subsubsection{Sensitivity to the stabilizing parameter $\kappa$.} We test the influence of the stabilizing parameter $\kappa$ on the vector-valued problem. Since $\kappa$ is an auxiliary parameter introduced in the separation of linear and nonlinear terms, the continuous problem is independent of its value. The sufficient condition used in the energy and convergence analysis is $\kappa\geq3m_2+1$. We compare $\kappa=5,10,15$ using the rescaled ETDRK$5$ scheme with $\tau=1$ and $T=500$.
\begin{figure}[!htpb]
\centering
\includegraphics[width=0.70\textwidth]{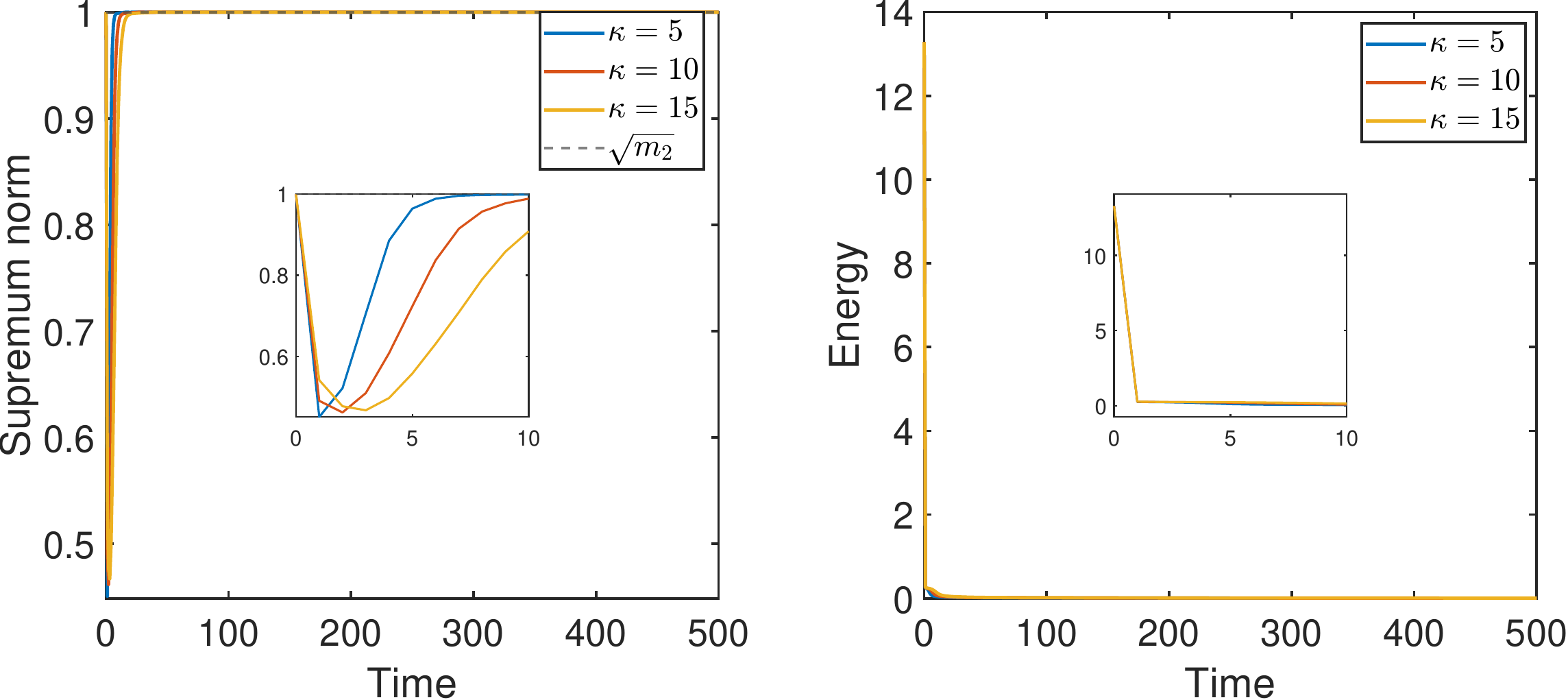}
\caption{Evolution of the supremum norm $\|\cdot\|_{\mathcal{X}}$ and the discrete energy for the rescaled ETDRK$5$ scheme with $\kappa=5,10,15$ up to $T=500$ with $\tau=1$.}
\label{fig:4.12}
\end{figure}

\begin{figure}[!htpb]
\centering
\subfigure[$\kappa=5$]{\includegraphics[width=0.98\textwidth]{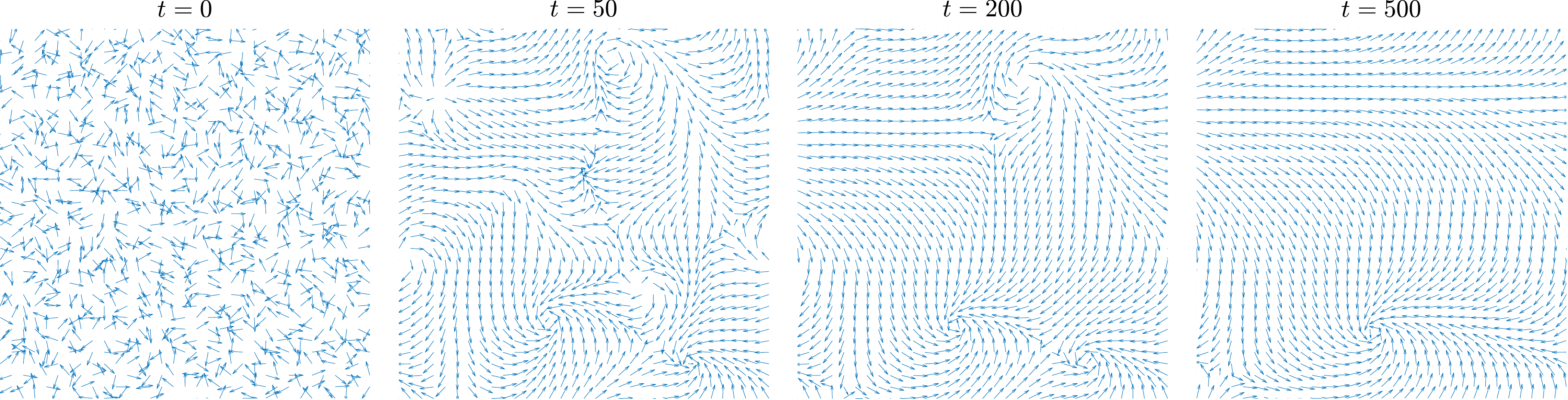}}\\[-1mm]
\subfigure[$\kappa=10$]{\includegraphics[width=0.98\textwidth]{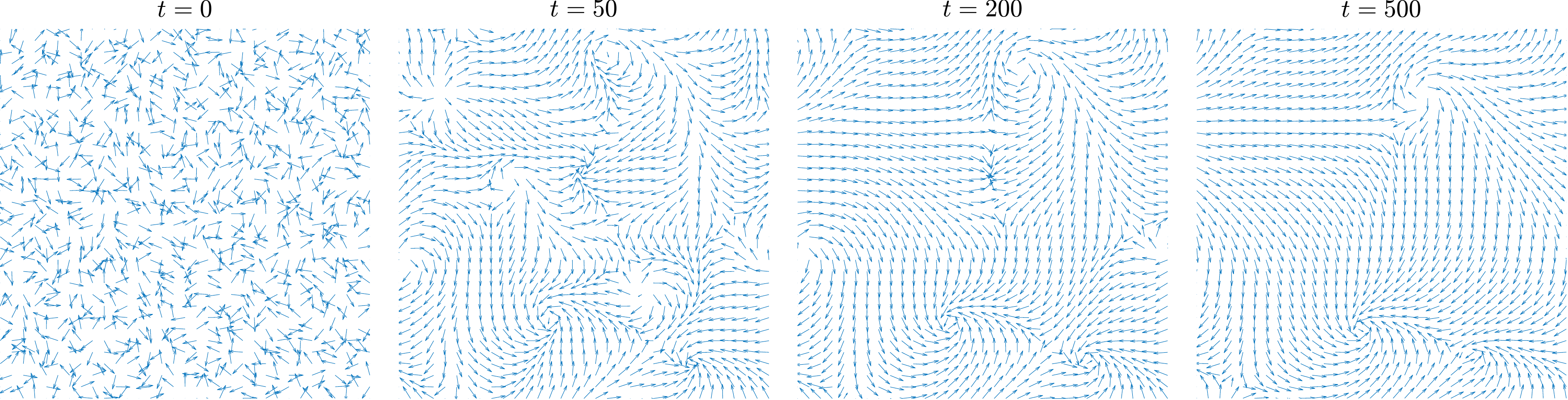}}\\[-1mm]
\subfigure[$\kappa=15$]{\includegraphics[width=0.98\textwidth]{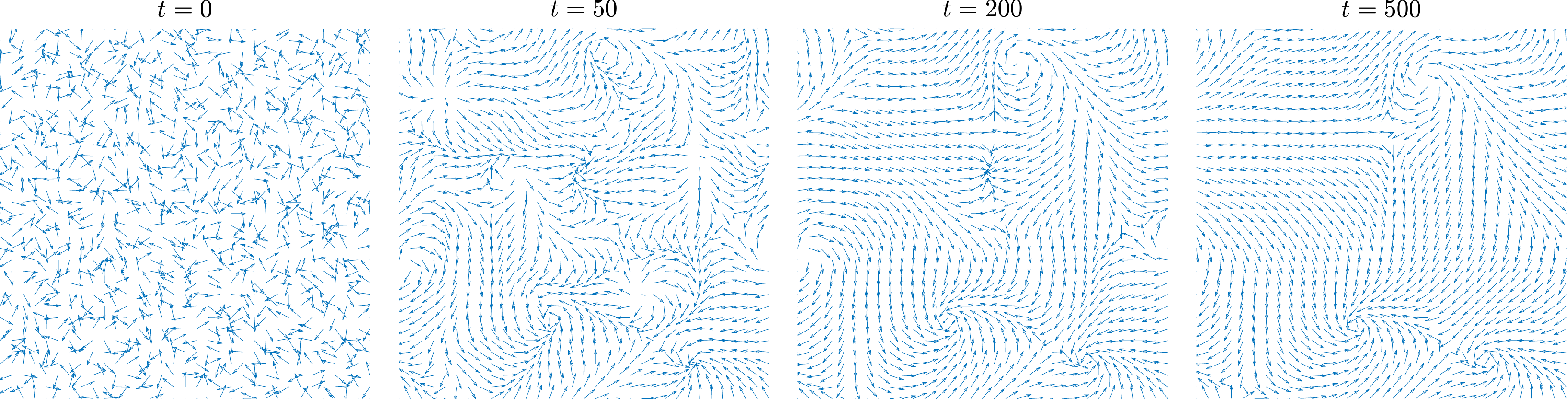}}
\caption{Evolution of the vector field for the rescaled ETDRK$5$ scheme with $\kappa=5,10,15$ at $t=0,50,200,500$. The initial field is given in \eqref{eq:4.1}.}
\label{fig:4.13}
\end{figure}

Figure \ref{fig:4.12} shows that the supremum norm remains below the theoretical bound $\sqrt{m_2}=1$ and that the discrete energy decreases monotonically for all tested values of $\kappa$. Thus, the MBP and energy dissipation properties are robust with respect to $\kappa$. However, as $\kappa$ increases, the recovery of the supremum norm toward $1$ is delayed. The snapshots in Figure \ref{fig:4.13} show the same qualitative evolution for all three values, but the speed of the vector field's evolution slows down as $\kappa$ increases. These observations suggest that, subject to the theoretical lower bound, a relatively small value of $\kappa$ is preferable in practice.

\subsubsection{Computational efficiency and structure-preserving comparison.}
We compare the proposed rescaled ETDRK$r$ schemes with the $r$th-order implicit-explicit backward differentiation method (IMEX-BDF$r$) and the implicit-explicit Runge--Kutta method (IMEX-RK$r$), for $r=3,4,5$. All three methods use the same FFT-based spatial discretization and the initial condition \eqref{eq:4.1}. The computations are performed on a $128\times128$ grid with $\varepsilon=0.01$, $\kappa=5$, and $T=100$, using $\tau=0.25,0.5,1,2$.

Let $M$ denote the total number of spatial grid points and let $q$ be the number of nonlinear or internal-stage evaluations in one time step. For an $m_1\times m_2$ matrix field, the forward and inverse FFTs cost $\mathcal O(qm_1m_2M\log M)$, while the pointwise matrix nonlinearity costs $\mathcal O(qm_1m_2^2M)$. The rescaling procedure requires only pointwise Frobenius-norm computations, maximum operations, and scalar multiplications, all of which are linear in the number of matrix entries and spatial grid points. Therefore, the rescaling procedure does not change the complexity with respect to $M$, $m_1$, and $m_2$. However, the constants in these methods differ. IMEX-BDF$r$ performs one nonlinear evaluation per step; the IMEX-RK scheme used in this paper has $4$, $6$, and $8$ stages for $r=3$, $4$, and $5$, respectively; while the rescaled ETDRK scheme requires the construction of recursive stages.
\begin{table}[!htpb]
\centering
\normalsize
\renewcommand{\arraystretch}{1.2}
\caption{Average CPU time per step (in seconds) for the rescaled ETDRK$r$, IMEX-BDF$r$, and IMEX-RK$r$ schemes ($r=3,4,5$) applied to the vector-valued Allen--Cahn equation with initial condition \eqref{eq:4.1}.}
\label{tab:vector_cost}
\vspace{10pt}
\begin{tabular}{ccccc}
\hline
$r$ & $\tau$ & rescaled ETDRK$r$ & IMEX-BDF$r$ & IMEX-RK$r$\\
\hline
\multirow{4}{*}{$r=3$}
& $0.25$ & $0.00440$ & $0.00047$ & $0.00186$\\
& $0.5$  & $0.00437$ & $0.00042$ & $0.00167$\\
& $1$    & $0.00466$ & $0.00062$ & $0.00188$\\
& $2$    & $0.00503$ & $0.00063$ & $0.00261$\\
\hline
\multirow{4}{*}{$r=4$}
& $0.25$ & $0.01030$ & $0.00048$ & $0.00303$\\
& $0.5$  & $0.01169$ & $0.00067$ & $0.00290$\\
& $1$    & $0.01324$ & $0.00076$ & $0.00306$\\
& $2$    & $0.01408$ & $0.00079$ & $0.00461$\\
\hline
\multirow{4}{*}{$r=5$}
& $0.25$ & $0.02693$ & $0.00080$ & $0.00454$\\
& $0.5$  & $0.02811$ & $0.00088$ & $0.00461$\\
& $1$    & $0.03110$ & $0.00091$ & $0.00726$\\
& $2$    & $0.03211$ & $0.00124$ & $0.00732$\\
\hline
\end{tabular}
\end{table}

Table \ref{tab:vector_cost} shows that IMEX-BDF has the smallest per-step cost, IMEX-RK has an intermediate cost, and the rescaled ETDRK schemes are more expensive per step. This ordering is consistent with their respective numbers of internal stages and nonlinear evaluations.

\begin{figure}[!htpb]
\centering
\includegraphics[width=0.96\textwidth]{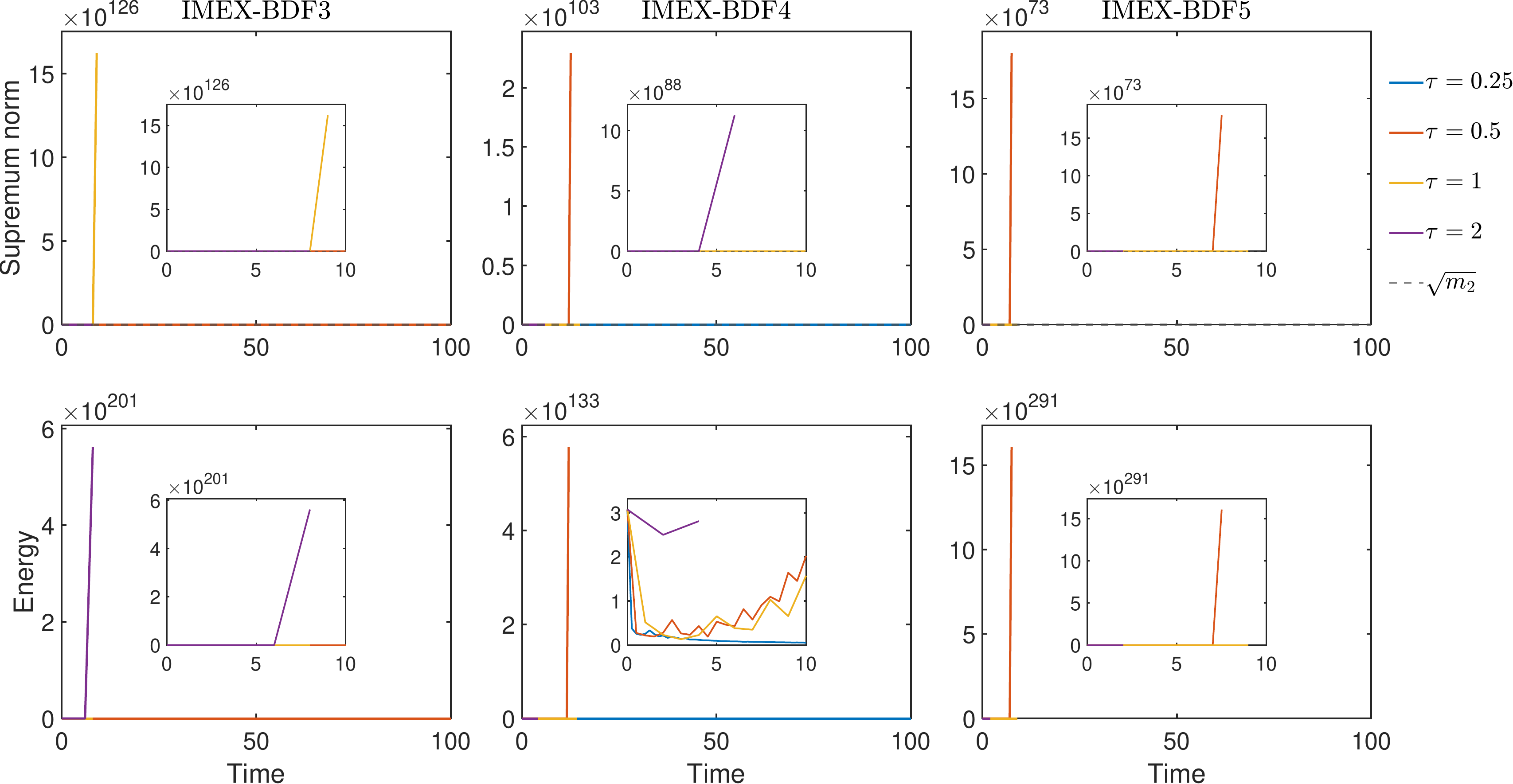}
\caption{Evolution of the supremum norm (upper row) and the discrete energy (lower row) for the IMEX-BDF$r$ schemes, $r=3,4,5$ from left to right, with $\tau=0.25,0.5,1,2$. The insets enlarge the interval $t\in[0,10]$.}
\label{fig:vector-IMEX-BDF}
\end{figure}

\begin{figure}[!htpb]
\centering
\includegraphics[width=0.96\textwidth]{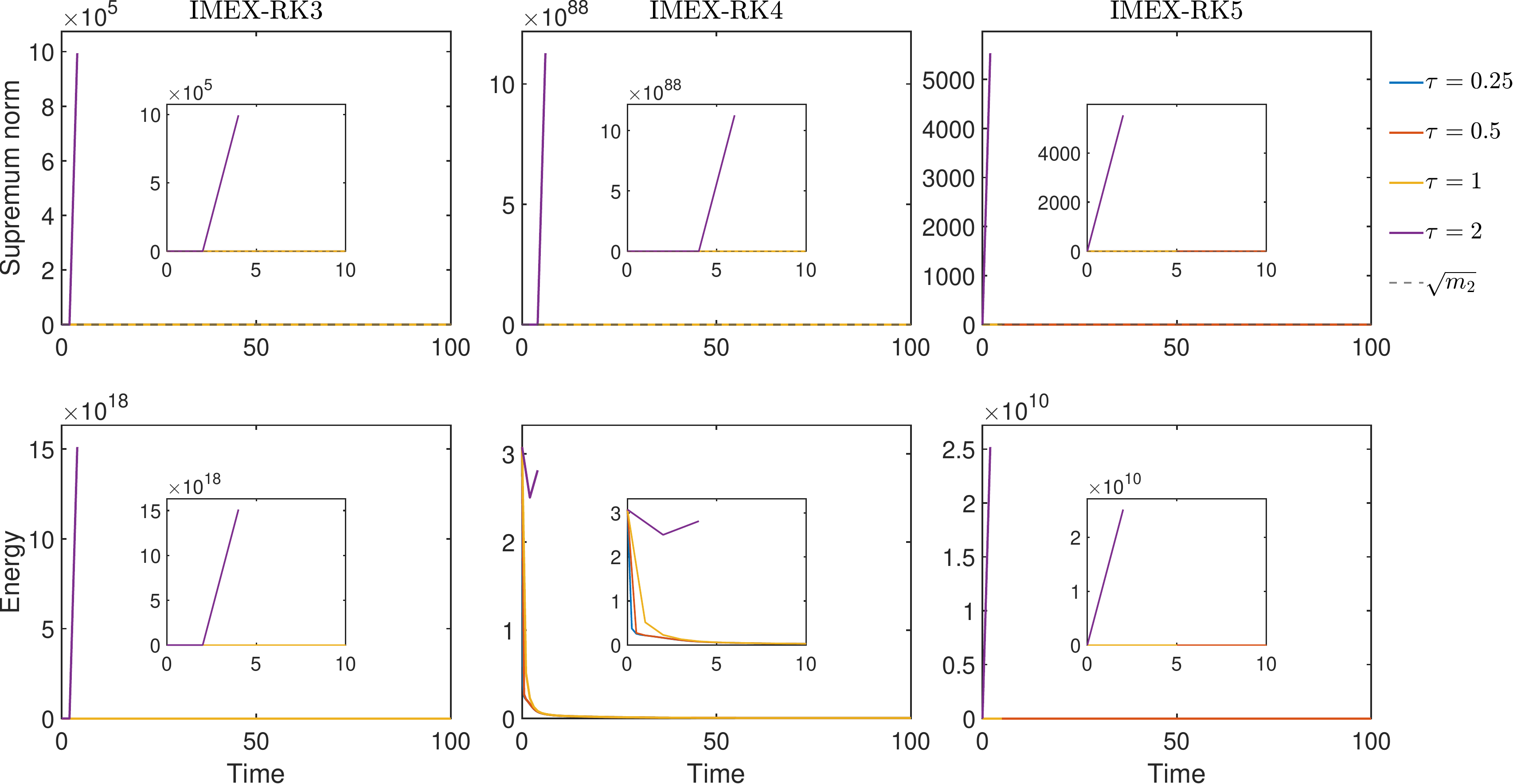}
\caption{Evolution of the supremum norm (upper row) and the discrete energy (lower row) for the IMEX-RK$r$ schemes, $r=3,4,5$ from left to right, with $\tau=0.25,0.5,1,2$. The insets enlarge the interval $t\in[0,10]$.}
\label{fig:vector-IMEX-RK}
\end{figure}

However, the time taken per step alone does not measure whether a method produces a structurally admissible solution at the selected time step.  Figure \ref{fig:4.11} shows that the rescaled ETDRK$r$ scheme satisfies the bound $\|U^n\|_{\mathcal X}\leq1$ for every tested $(r,\tau)$ pair and exhibits monotonically decreasing energy. In contrast, as shown in Figure \ref{fig:vector-IMEX-BDF}--\ref{fig:vector-IMEX-RK}, both the IMEX-BDF and IMEX-RK schemes violate the MBP and energy dissipation properties when $\tau$ is relatively large. Thus, the proposed rescaled ETDRK schemes incur a higher per-step cost in this implementation, but provide substantially stronger robustness of the MBP and energy decay for large time steps.

\subsection{Square-matrix-valued case}
We consider the generalized matrix-valued Allen--Cahn equation \eqref{eq:MAC} with $m_1=m_2=2$ on $\Omega=[-\frac 12,\frac 12]^2$ with periodic boundary conditions. The following petal-shaped initial field consists of two orthogonal matrix branches and therefore satisfies the pointwise bound $\|U_0(x,y)\|_\tF=\sqrt{2}$:
\begin{equation}\label{eq:4.2}
\begin{aligned}
U_0(x,y)=\begin{cases}
\begin{bmatrix}
\cos\alpha&-\sin\alpha\\
\sin\alpha&\cos\alpha
\end{bmatrix}\quad \rho<0.18+0.2\sin(6\theta),\\
\\
\begin{bmatrix}
\cos\alpha&\sin\alpha\\
\sin\alpha&-\cos\alpha
\end{bmatrix}\quad\text{otherwise},\\
\end{cases}
\end{aligned}
\end{equation}
where $\alpha=\alpha(x,y)=\frac{\pi}{2}\sin(2\pi(x+y))$ and $(\rho,\theta)$ denotes the polar coordinate of $(x,y)$, with $\rho=\sqrt{x^2+y^2}$. 

For visualization, we use the determinant $c(t,x,y)=\det(U(t,x,y))$ as the scalar order parameter. The color map represents $c(t,x,y)$, the diffuse interface is identified with the zero-level set $c=0$, and the arrows denote the first column vector of $U$.

\subsubsection{Temporal convergence.}
To test the convergence of the rescaled ETDRK schemes for the square-matrix-valued Allen--Cahn equation, we fix the spatial grid and employ the rescaled ETDRK$r$ schemes ($r=3,4,5$) with time steps $\tau=0.1\times 2^{-k}$, $k=0,\ldots,4$. The $L^2$- and $L^\infty$-errors at a fixed final time (relative to a reference solution computed with a very small time step) and the corresponding convergence rates are presented in Table \ref{tab:2}. It is evident that whenever $\tau$ is halved, the errors decay at a rate $O(\tau^r)$, and the observed convergence orders in both norms are close to the designed orders $r$. This confirms that the rescaled ETDRK$r$ schemes retain their optimal accuracy for the square-matrix-valued Allen--Cahn equation.
\begin{table}[!htpb]
\centering
\normalsize
\caption{$L^2$- and $L^\infty$-errors and convergence rates of the rescaled ETDRK$r$ schemes ($r=3,4,5$) for the square-matrix-valued Allen--Cahn equation with initial condition \eqref{eq:4.2}.}
\label{tab:2}
\vspace{10pt}
\begin{tabular}{cccccc}
\hline
$r$&$\tau$ & $L^2$ error & Rate&$L^{\infty}$ error & Rate \\
\hline
\multirow{5}{*}{$r=3$}&
$0.1\times 2^{0}$&$4.24\times 10^{-3}$&-&$8.75\times 10^{-3}$&-\\
&$0.1\times 2^{-1}$&$6.50\times 10^{-4}$&2.70&$1.36\times 10^{-3}$&2.68\\
&$0.1\times 2^{-2}$&$9.03\times 10^{-5}$&2.85&$1.91\times 10^{-4}$&2.84\\
&$0.1\times 2^{-3}$&$1.19\times 10^{-5}$&2.92&$2.53\times 10^{-5}$&2.92\\
&$0.1\times 2^{-4}$&$1.53\times 10^{-6}$&2.96&$3.25\times 10^{-6}$&2.96\\
\hline
\multirow{5}{*}{$r=4$}&
$0.1\times 2^{0}$&$4.39\times 10^{-4}$&-&$9.70\times 10^{-4}$&-\\
&$0.1\times 2^{-1}$&$3.45\times 10^{-5}$&3.67&$7.74\times 10^{-5}$&3.65\\
&$0.1\times 2^{-2}$&$2.42\times 10^{-6}$&3.83&$5.48\times 10^{-6}$&3.82\\
&$0.1\times 2^{-3}$&$1.61\times 10^{-7}$&3.91&$3.65\times 10^{-7}$&3.91\\
&$0.1\times 2^{-4}$&$1.04\times 10^{-8}$&3.96&$2.35\times 10^{-8}$&3.95\\
\hline
\multirow{5}{*}{$r=5$}&
$0.1\times 2^{0}$&$3.90\times 10^{-5}$&-&$9.09\times 10^{-5}$&-\\
&$0.1\times 2^{-1}$&$1.55\times 10^{-6}$&4.65&$3.66\times 10^{-6}$&4.63\\
&$0.1\times 2^{-2}$&$5.47\times 10^{-8}$&4.82&$1.30\times 10^{-7}$&4.81\\
&$0.1\times 2^{-3}$&$1.82\times 10^{-9}$&4.91&$4.35\times 10^{-9}$&4.91\\
&$0.1\times 2^{-4}$&$1.28\times 10^{-10}$&3.83&$1.42\times 10^{-10}$&4.94\\
\hline
\end{tabular}
\end{table}

\subsubsection{Time-step robustness of the MBP and energy.}
We examine the discrete MBP and energy dissipation of the rescaled ETDRK$r$ schemes ($r=3,4,5$) for the square-matrix-valued case. The terminal time is set to $T=100$. The time steps are chosen as $\tau=0.25,0.5,1,2$. Figure \ref{fig:4.21} plots the time evolution of $\|U\|_\mX$ (upper row) and the discrete energy (lower row). In all tests, $\|U^n\|_\mX$ remains below the theoretical bound $\sqrt{2}$, and the discrete energy decreases monotonically. These results confirm the MBP preservation and the energy-dissipative behavior of the proposed rescaled ETDRK schemes in the square-matrix-valued case.
\begin{figure}[!htpb]
\centering
\includegraphics[width=0.96\textwidth]{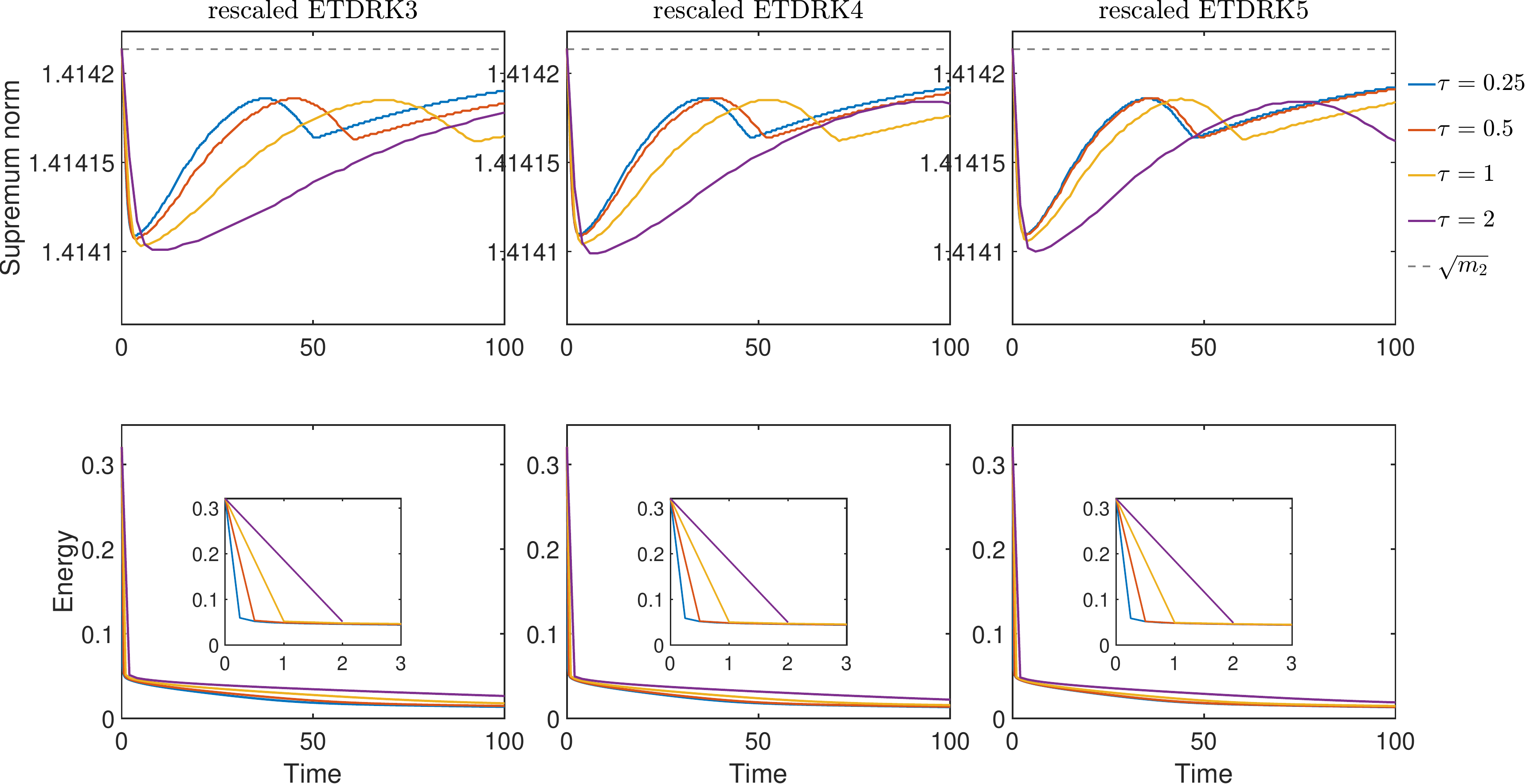}
\caption{Evolution of the supremum norm $\|U\|_{\mathcal{X}}$ (upper row) and the discrete energy (lower row) for the rescaled ETDRK$r$ schemes ($r=3,4,5$) with initial condition \eqref{eq:4.2}. In each row, the three panels correspond to $r=3,4,5$ from left to right.}
\label{fig:4.21}
\end{figure}

\subsubsection{Evolution of the square-matrix-valued field.}
We next show the interfacial evolution of the square-matrix-valued field for $\varepsilon=0.01$. The computation is performed for the initial condition \eqref{eq:4.2}. Figure~\ref{fig:4.21-evolution} shows that the initial petal-shaped region is rapidly regularized by the diffuse-interface dynamics. The highly curved tips disappear first, and the interface then becomes smoother and more circular. At later times, the enclosed region shrinks and eventually disappears, while the matrix field relaxes toward a nearly uniform orthogonal phase. This behavior is consistent with a diffuse-interface mean-curvature-flow mechanism. The interface regions with larger curvature move faster, while the small enclosed regions tend to shrink.
\begin{figure}[!htpb]
\centering
\includegraphics[width=0.98\textwidth]{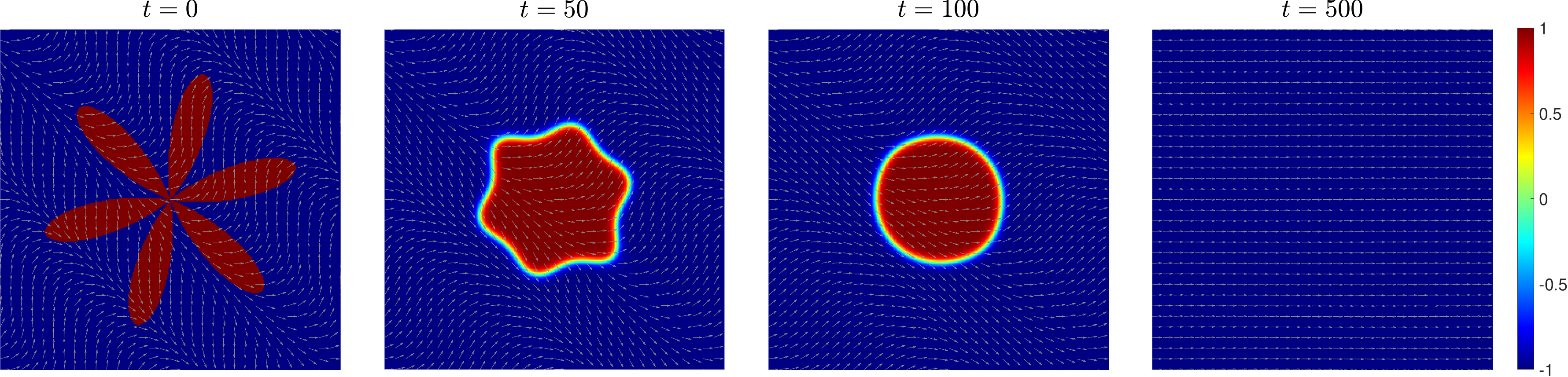}
\caption{Evolution of the square-matrix-valued field for $\varepsilon=0.01$ at $t=0,50,100,500$.}
\label{fig:4.21-evolution}
\end{figure}

\subsubsection{Sensitivity to the thickness parameter $\varepsilon$.}

We study the influence of the thickness parameter $\varepsilon$ on the interfacial evolution, in connection with the sharp-interface behavior discussed in \cite{fei2023matrix}. We use the same initial condition \eqref{eq:4.2} and run the rescaled ETDRK$5$ scheme to compare the cases for $\varepsilon = 0.01, 0.05,$ and $0.1$. Since the interfacial time scale changes with $\varepsilon$, different snapshot times are used, and the results are plotted in Figure \ref{fig:4.21-epsilon}. The figure shows that the thickness parameter strongly affects both the width of the transition layer and the relaxation speed. For $\varepsilon=0.01$, the transition layer remains thin, and the two orthogonal phases are clearly separated. The initial petal-shaped interface is gradually smoothed near the highly curved tips, but the petal structure is still visible at $t=10$. For $\varepsilon=0.05$, the transition layer becomes wider and the red phase is regularized much faster. By $t=10$, the petal-shaped region has essentially disappeared. For $\varepsilon=0.1$, the smoothing is even stronger: the interface is already very diffuse at early times, and the solution rapidly relaxes toward a nearly uniform phase. Thus, increasing $\varepsilon$ produces a thicker diffuse interface and accelerates the visible disappearance of the initial pattern. These observations are consistent with the sharp-interface picture in \cite{fei2023matrix}. For small $\varepsilon$, the diffuse interface remains localized and moves in a curvature-driven manner, while larger $\varepsilon$ leads to stronger diffuse smoothing. In all tested cases, the first-column vector field evolves smoothly and no visible spurious oscillations appear, indicating that the proposed rescaled ETDRK scheme remains stable with respect to the thickness parameter.

\begin{figure}[!htpb]
\centering
\subfigure[$\varepsilon=0.01$]{\includegraphics[width=0.98\textwidth]{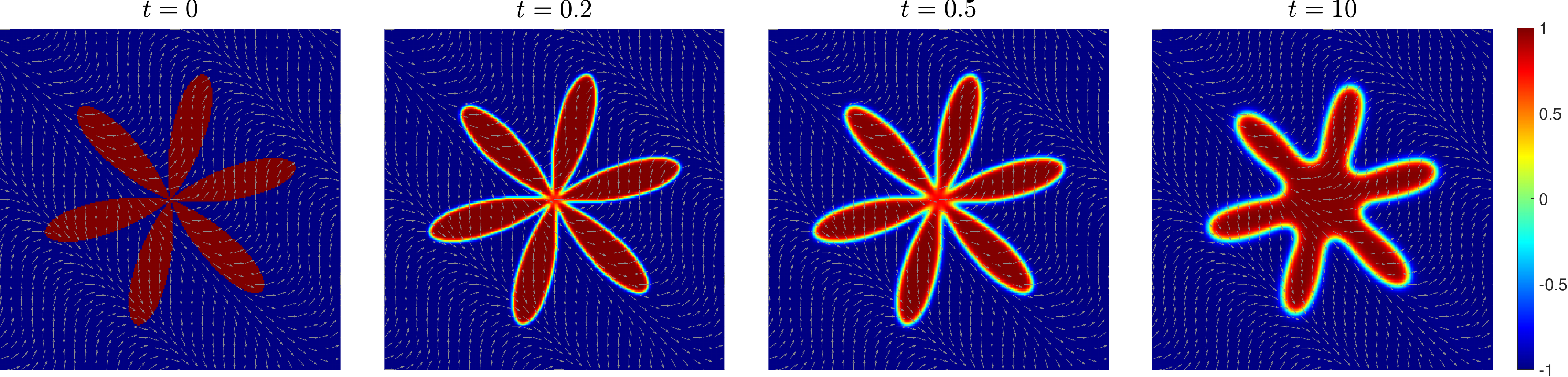}}\\[-1mm]
\subfigure[$\varepsilon=0.05$]{\includegraphics[width=0.98\textwidth]{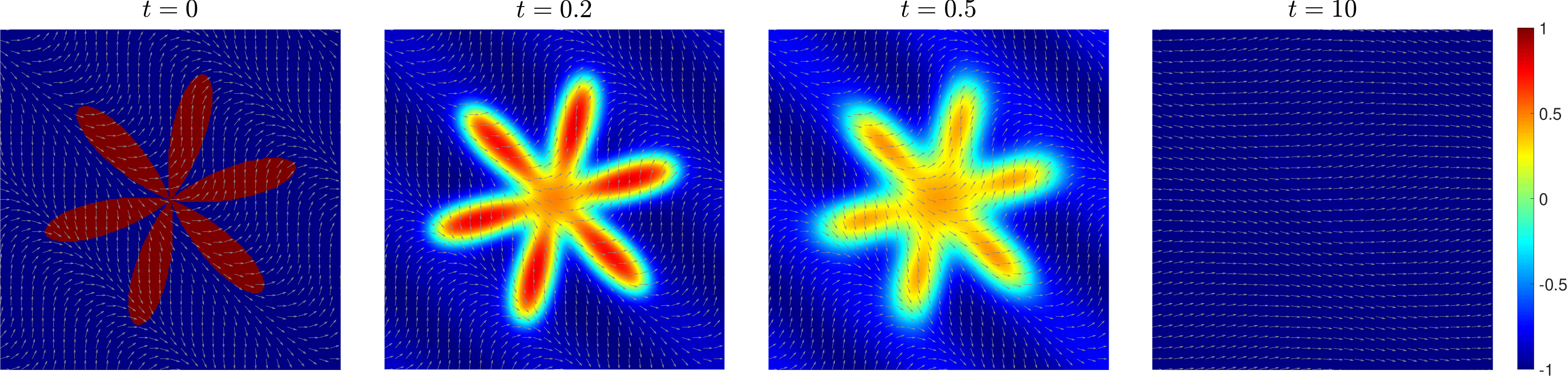}}\\[-1mm]
\subfigure[$\varepsilon=0.1$]{\includegraphics[width=0.98\textwidth]{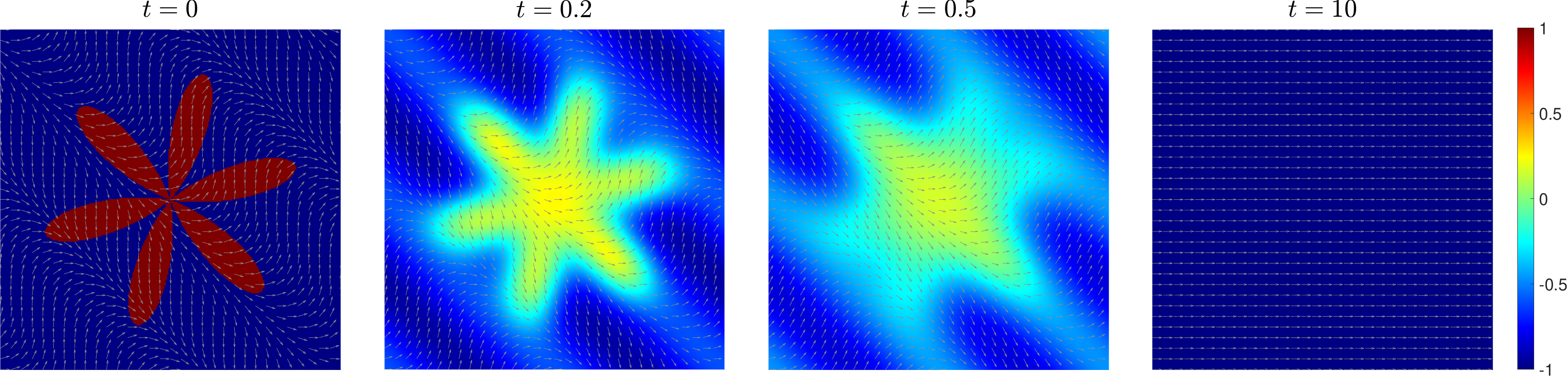}}
\caption{Evolution of the square-matrix-valued field for different thickness parameters. From top to bottom, the rows correspond to $\varepsilon=0.01$, $0.05$, and $0.1$, respectively. The snapshots are taken at $t=0,0.2,0.5,10$.}
\label{fig:4.21-epsilon}
\end{figure}

\subsubsection{Denoising of orthogonal matrix fields.}
We further examine a denoising experiment for an orthogonal matrix field. Let $U^0_0$ be the clean initial condition in \eqref{eq:4.2}. We generate a noisy matrix field by adding componentwise Gaussian perturbations,
\begin{equation*}
\widetilde U^\sigma_0(x,y)=U^0_0(x,y)+\sigma G(x,y),\qquad \sigma=0.15,
\end{equation*}
where the entries of $G$ are independent standard normal random variables. Since the discrete MBP theorem for the proposed rescaled ETDRK$r$ schemes requires the initial data to lie in the admissible ball, we project the perturbed field pointwise onto this ball and define
\begin{equation*}
U^\sigma_0(x,y)=
\min\left\{1,\frac{\sqrt{2}}{\|\widetilde U^\sigma_0(x,y)\|_\tF}\right\}
\widetilde U^\sigma_0(x,y),
\end{equation*}
where $\|U^\sigma_0(x,y)\|_\tF\leq\sqrt{2}$ for all grid points. We evolve both the clean initial field $U^0_0$ and the noisy initial field $U^\sigma_0$ by the rescaled ETDRK$5$ scheme with $\tau=0.01$, and denote the two numerical trajectories by $U^0(t)$ and $U^\sigma(t)$, respectively. To quantify the effect of denoising, we use the following error
\begin{equation*}
e_{\rm traj}(t)=\|U^\sigma(t)-U^0(t)\|_{L^2}.
\end{equation*}
Here $e_{\rm traj}$ compares the noisy evolution with the clean evolution at the same time.

Figure \ref{fig:4.22-noise-fields} compares the clean and noisy evolutions at $t=0,0.1,0.5,1$, both computed by the proposed rescaled ETDRK$5$ scheme with $\tau=0.01$. The initially noisy field is rapidly regularized and becomes visually close to the clean evolution. This demonstrates the smoothing effect of the matrix-valued Allen--Cahn flow and shows that the proposed scheme effectively captures the corresponding denoising dynamics. The short-time diagnostic curves are presented in Figure \ref{fig:4.23-noise-short-metrics}. The trajectory error $e_{\rm traj}(t)$ decreases from $2.70\times10^{-1}$ at $t=0$ to $1.69\times10^{-2}$ at $t=1$, confirming that the noisy evolution approaches the clean evolution.
\begin{figure}[!htpb]
\centering
\subfigure[Clean evolution]{\includegraphics[width=0.98\textwidth]{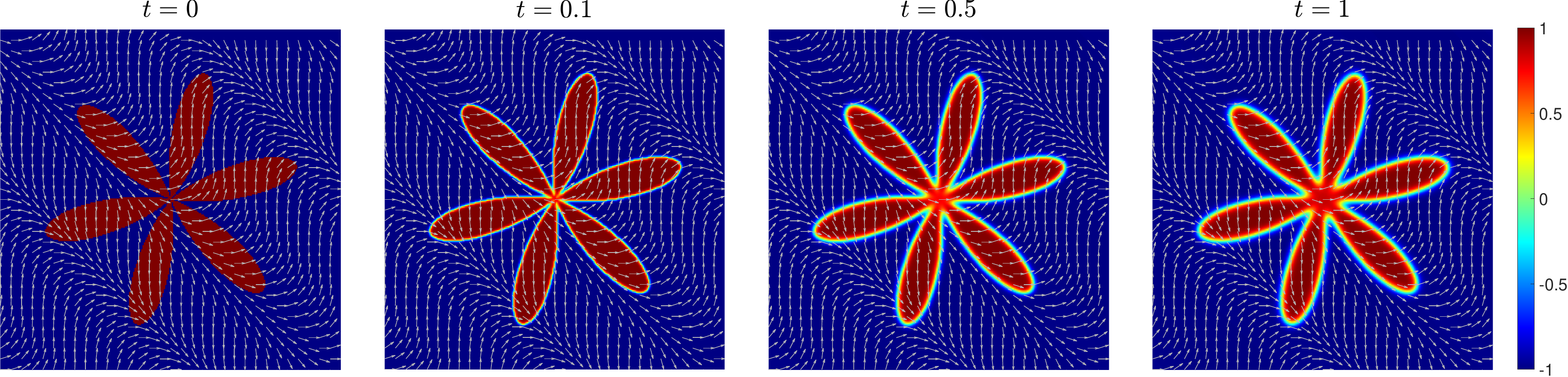}}\\[-1mm]
\subfigure[Noisy evolution]{\includegraphics[width=0.98\textwidth]{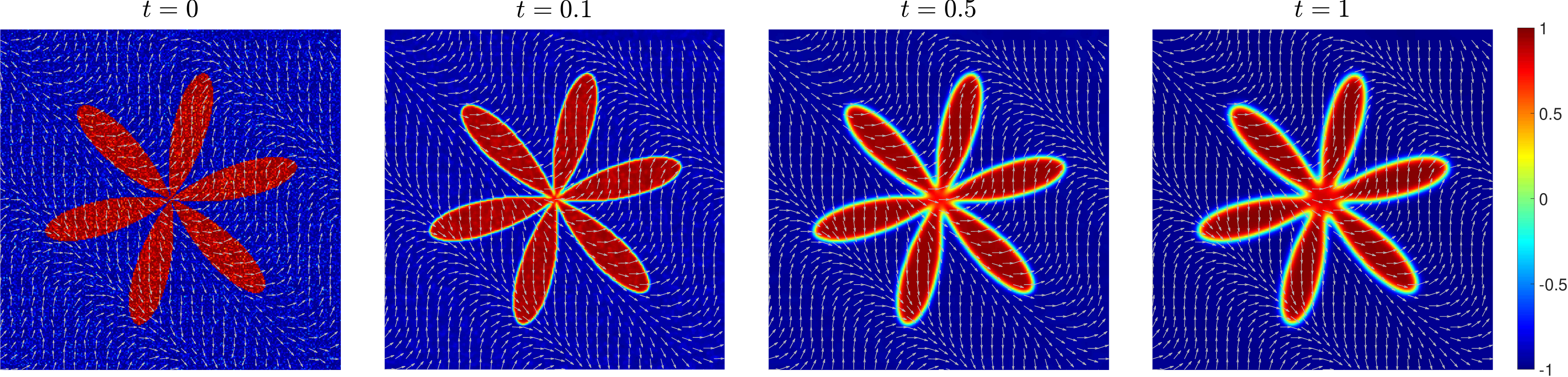}}
\caption{Denoising experiment for the square-matrix-valued Allen--Cahn equation. The upper row shows the evolution from the clean initial field $U^0(0)$, and the lower row shows the evolution from the noisy initial field $U^\sigma(0)$ with noise level $\sigma=0.15$.}
\label{fig:4.22-noise-fields}
\end{figure}
\begin{figure}[!htpb]
\centering
\includegraphics[width=0.32\textwidth]{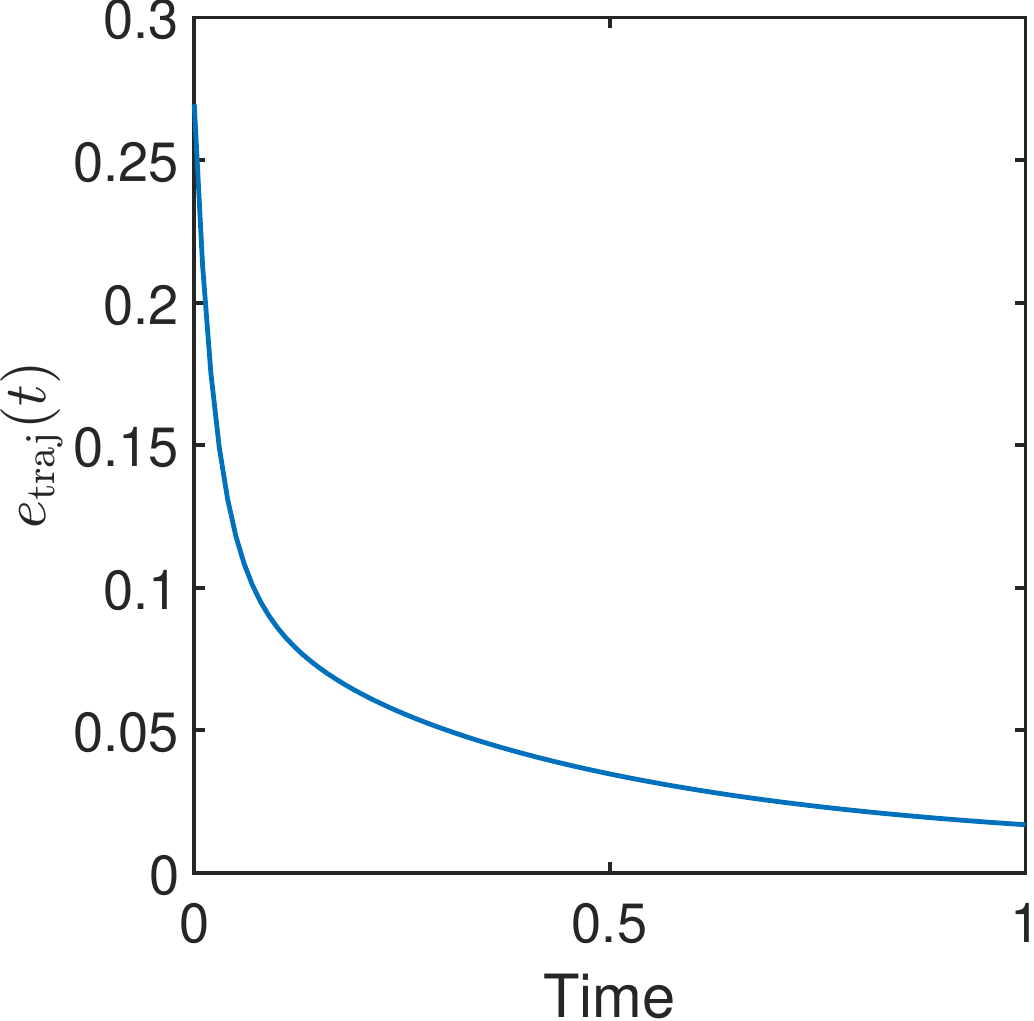}
\caption{The diagnostic curve for the denoising experiment on $t\in[0,1]$.}
\label{fig:4.23-noise-short-metrics}
\end{figure}

To examine structure preservation for noisy data, we further run the rescaled ETDRK$5$ scheme up to $T=100$ with $\tau=0.1,0.5,1,2$. As shown in Figure \ref{fig:4.24-noise-long-structure}, the supremum norm remains below the theoretical bound $\sqrt{2}$ for all tested time steps, while the discrete energy decreases monotonically. These results demonstrate that the proposed scheme captures the short-time denoising dynamics while preserving the discrete MBP and energy-dissipation property, even for noisy matrix-valued data and large time steps.
\begin{figure}[!htpb]
\centering
\includegraphics[width=0.7\textwidth]{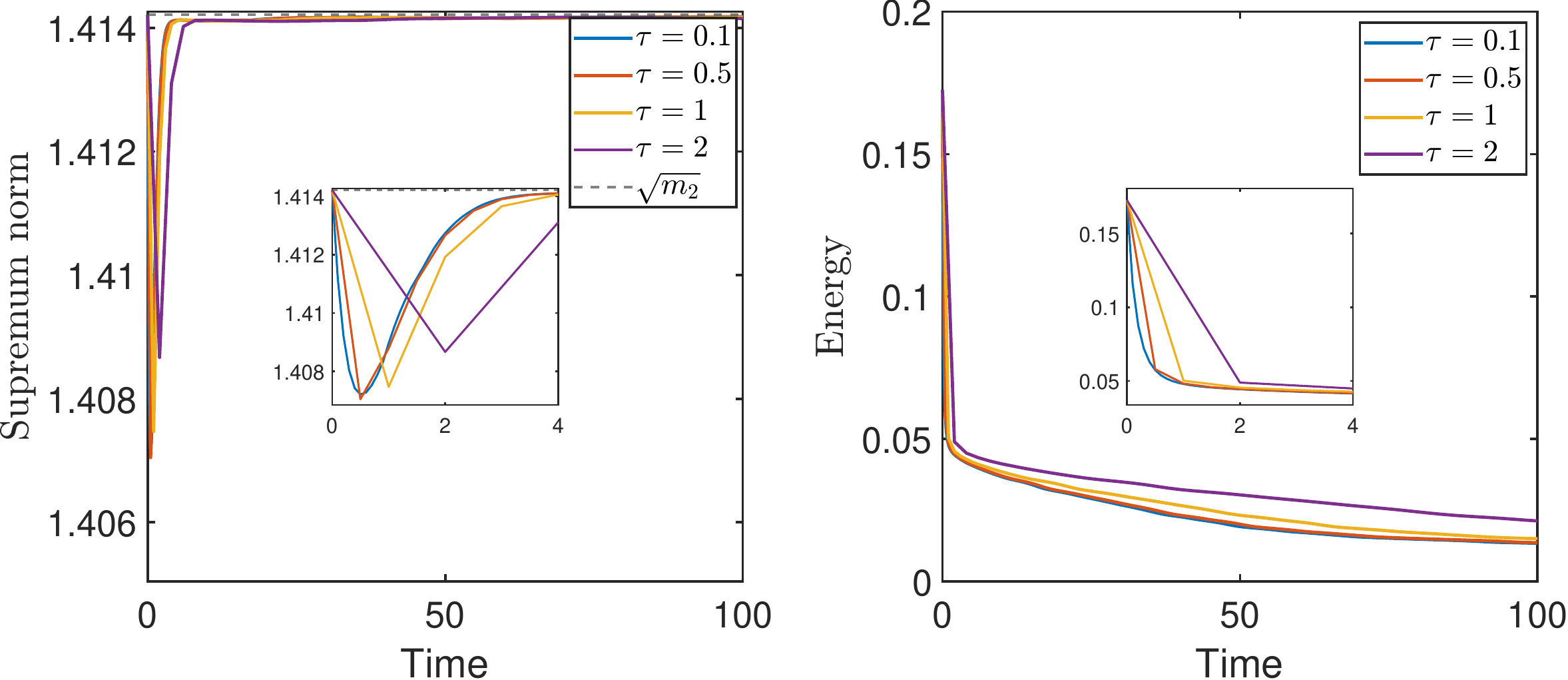}
\caption{The structure-preserving test for the noisy initial matrix field. The rescaled ETDRK$5$ scheme is run up to $T=100$ with $\tau=0.1,0.5,1,2$.}
\label{fig:4.24-noise-long-structure}
\end{figure}

\subsection{Generalized matrix-valued case}
Let $\Omega=[-\frac12,\frac12]^2$ and choose $K\ge2$ distinct seed points $p_k=(x_k,y_k)\in\Omega$ for $k=1,\dots,K$. The associated Voronoi tessellation $\{\Omega_k\}_{k=1}^K$ of $\Omega$ is defined by
$$\Omega_k=\Big\{(x,y)\in\Omega:\ |(x,y)-p_k|\le |(x,y)-p_\ell|\ \ \forall\,\ell\neq k\Big\},\qquad k=1,\dots,K,$$
which yields a polycrystalline configuration with piecewise-constant grain labels \cite{staublin2022phase}. For each grain $\Omega_k$, we prescribe an angle  $\alpha_k\in[0,2\pi)$ and a sign $s_k\in\{+1,-1\}$ and define $\beta(x,y)=\frac{\pi}{20}\sin\big(2\pi(x-y)\big)$. Then, for $k=1,\dots,K$, we define a $3\times2$ matrix field $U_k:\Omega\to\mathbb{R}^{3\times2}$ by
\begin{equation*}
U_k(x,y)=
\begin{pmatrix}
\cos(\alpha_k)\cos(\beta(x,y)) & s_k\big(-\sin(\alpha_k)\big)\\
\sin(\alpha_k)\cos(\beta(x,y)) & s_k\cos(\alpha_k)\\
\sin(\beta(x,y)) & 0
\end{pmatrix}.   
\end{equation*}
Equivalently, $U_k=[u_{1,k}\ u_{2,k}]$ with
$$u_{1,k}(x,y)=\big(\cos\alpha_k\cos\beta,\sin\alpha_k\cos\beta,\sin\beta\big)^\top,
\quad u_{2,k}(x,y)=s_k\big(-\sin\alpha_k,\ \cos\alpha_k,\ 0\big)^\top,$$
so that $u_{1,k}$ and $u_{2,k}$ are orthonormal for all $(x,y)\in\Omega$, i.e.,
$$U_k(x,y)^\top U_k(x,y)=I_2,\quad \forall (x,y)\in\Omega,\ \forall k=1,\dots,K.$$
In particular, the third component $U_{k,31}(x,y)=\sin(\beta(x,y))$ encodes an out-of-plane tilt, which does not exist in the $2\times2$ case :
\begin{equation*}
U_k(x,y)=
\begin{pmatrix}
\cos(\alpha_k)\cos(\beta(x,y)) & s_k\big(-\sin(\alpha_k)\big)\\
\sin(\alpha_k)\cos(\beta(x,y)) & s_k\cos(\alpha_k)
\end{pmatrix}.
\end{equation*}
Then, the initial condition is prescribed by the piecewise definition
\begin{equation}\label{eq:4.3}
U_0(x,y)=
\begin{cases}
U_1(x,y), & (x,y)\in\Omega_1,\\
U_2(x,y), & (x,y)\in\Omega_2,\\
\quad \vdots & \quad \vdots\\
U_K(x,y), & (x,y)\in\Omega_K.
\end{cases}
\end{equation}
For visualization, we introduce the projected determinant
\begin{equation*}
c(t,x,y):=\det\big(U_{1:2,1:2}(t,x,y)\big)=(u_1(t,x,y)\times u_2(t,x,y))\cdot e_3 \in [-1,1],
\end{equation*}
where $u_1$ and $u_2$ denote the first and second columns of $U$, respectively. 
We use $c$ as an order parameter, and the diffuse interface is defined by its zero-level set $\{c(t,x,y)=0\}$. The simulations are carried out by the rescaled ETDRK$r$ scheme with $r=3,4,5$. We investigate the MBP preservation and discrete energy dissipation of the proposed method, and compare the interfacial dynamics of the $3\times2$ matrix-valued field with those of the $2\times2$ matrix-valued case.

\subsubsection{Time-step robustness of the MBP and energy.}
We first test the structure-preserving behavior of the rescaled ETDRK$r$ schemes ($r=3,4,5$) with $\epsilon=0.01$ and $\kappa=7$. The final time is set to $T=500$, and four time steps $\tau=0.25,0.5,1,2$ are used for each order. Figure \ref{fig:4.31} shows that the supremum norm stays below the theoretical bound $\sqrt{2}$ for all tested time steps and all three high-order schemes. The discrete energy also decreases monotonically up to $T=500$, including for the large step size $\tau=2$. These results provide numerical evidence for the unconditional MBP preservation and energy stability of the proposed rescaled ETDRK schemes in the generalized $3\times2$ matrix-valued case.

\begin{figure}[!htpb]
\centering
\includegraphics[width=0.96\textwidth]{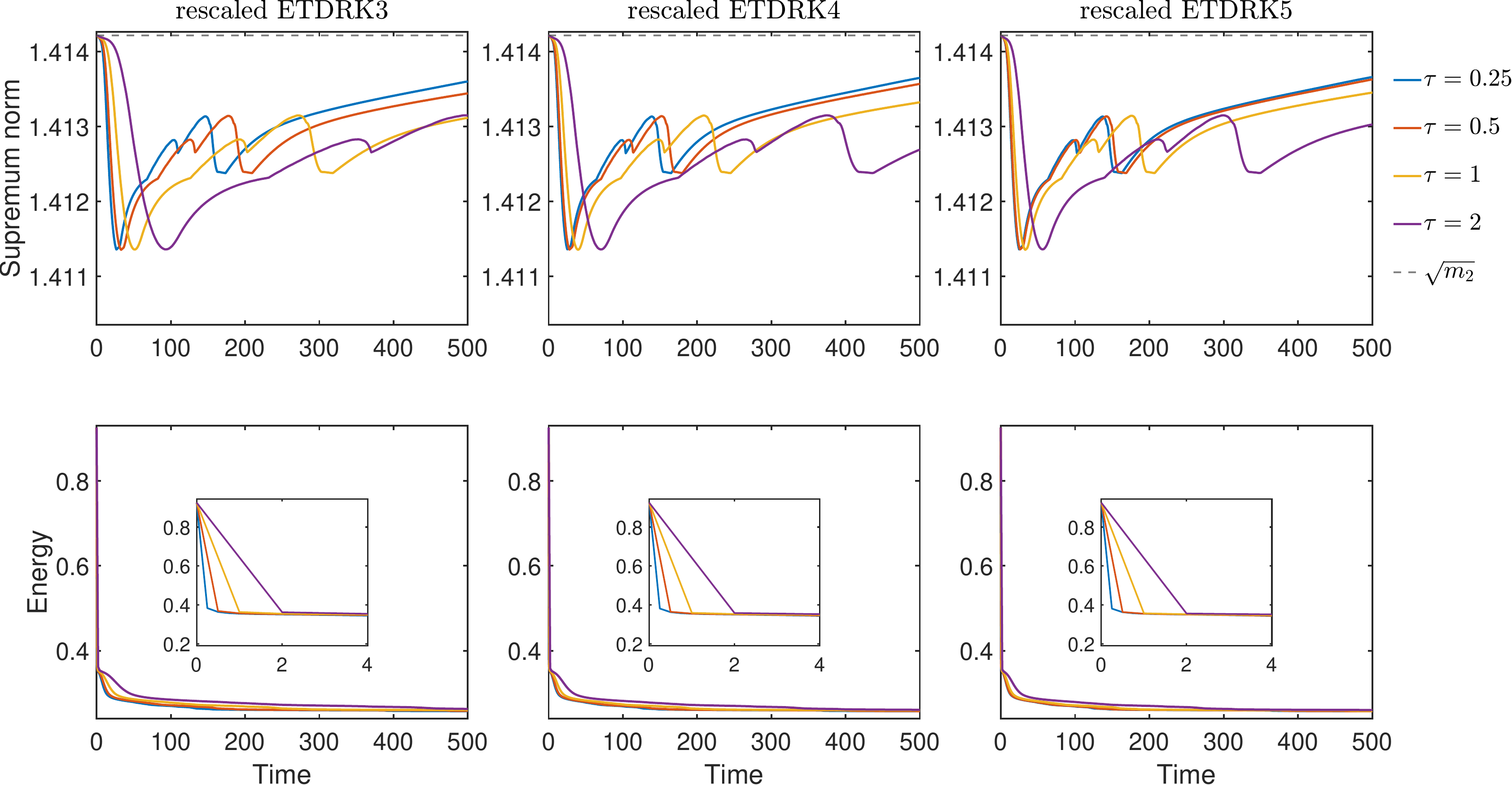}
\caption{Evolution of the supremum norm $\|\cdot\|_{\mathcal{X}}$ (upper row) and the discrete energy (lower row) for the rescaled ETDRK$r$ schemes ($r=3,4,5$) with initial condition \eqref{eq:4.3}. The three columns correspond to $r=3,4,5$, respectively.}
\label{fig:4.31}
\end{figure}

\subsubsection{Interfacial dynamics and the out-of-plane component.}
We next run the rescaled ETDRK$5$ scheme with $\tau=1$ and record the interfacial snapshots at $t=0,40,80,200$. In addition, we also show the out-of-plane component $U_{31}$ and its supremum norm up to $T=500$.
\begin{figure}[!htpb]
\centering
\includegraphics[width=0.98\textwidth]{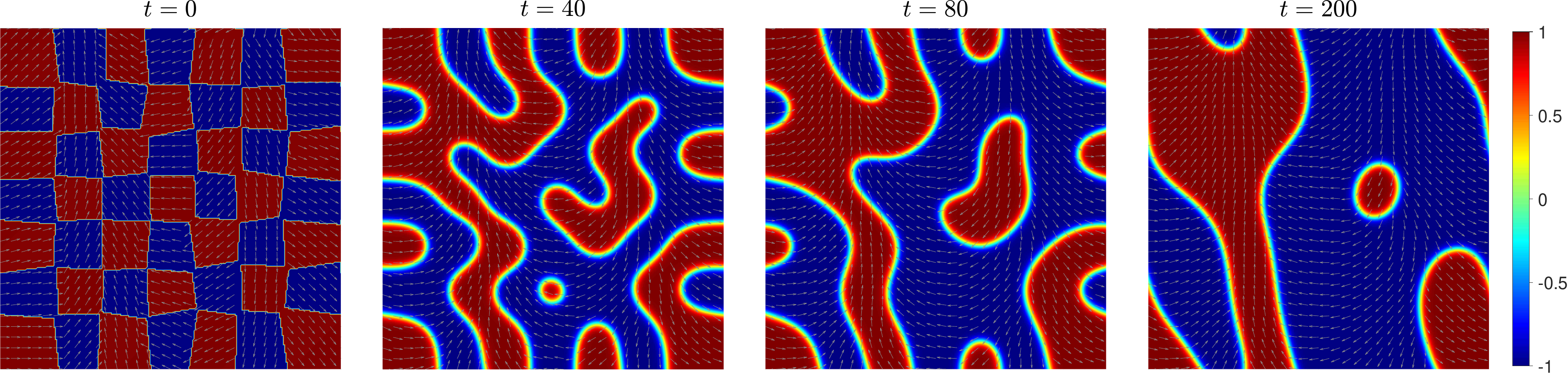}
\caption{Evolution of the $2\times2$ matrix-valued field and interface at $t=0,40,80,200$.}
\label{fig:4.32}
\end{figure}
\begin{figure}[!htpb]
\centering
\includegraphics[width=0.98\textwidth]{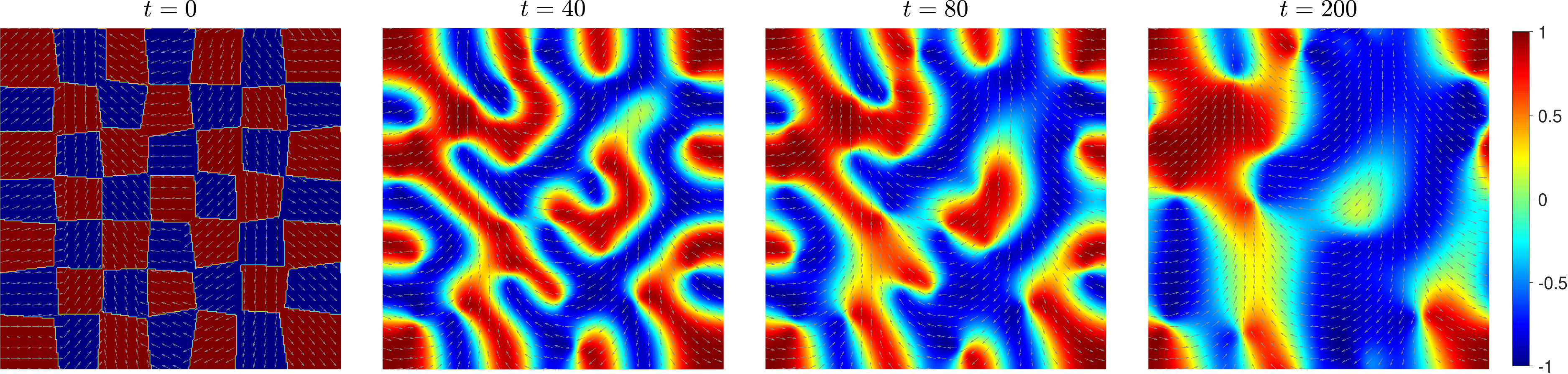}
\caption{Evolution of the $3\times2$ matrix-valued field and interface at $t=0,40,80,200$.}
\label{fig:4.33}
\end{figure}
\begin{figure}[!htpb]
\centering
\includegraphics[width=0.98\textwidth]{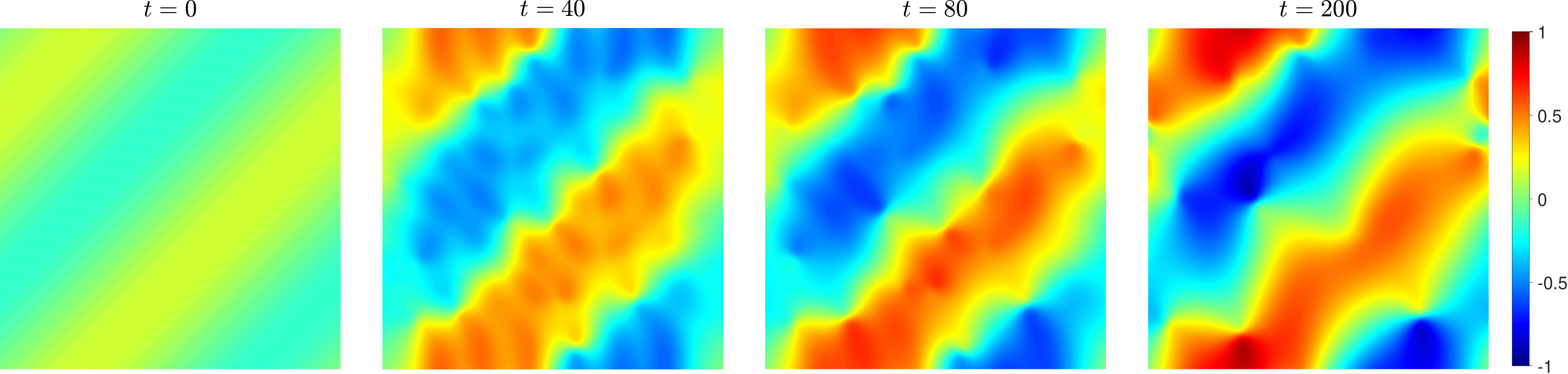}
\caption{The value of $U_{31}$ at $t=0,40,80,200$.}
\label{fig:4.34}
\end{figure}

Figures \ref{fig:4.32}--\ref{fig:4.33} compare the interface evolution of the $2\times2$ and $3\times2$ matrix-valued fields generated from Voronoi-type polycrystalline initial configurations on the domain $\Omega=[-\frac12,\frac12]^2$. In both cases, the diffuse interface is visualized by the order parameter $c(t,x,y)$, and the interface is defined as the zero-level set $c=0$. The sharp jumps between neighboring blocks at $t=0$ are quickly smoothed into transition layers; subsequently, small domains shrink and vanish while larger regions expand. The main difference is that in the $2\times2$ case the dynamics is governed by matrix variations, whereas in the $3\times2$ case the additional component $U_{31}$ evolves noticeably in time, as shown in Figure \ref{fig:4.34}. This degree of freedom changes the distribution of $c$ and leads to interface shapes that differ from those in the square-matrix-valued case.

\begin{figure}[!htpb]
\centering
\includegraphics[width=0.30\textwidth]{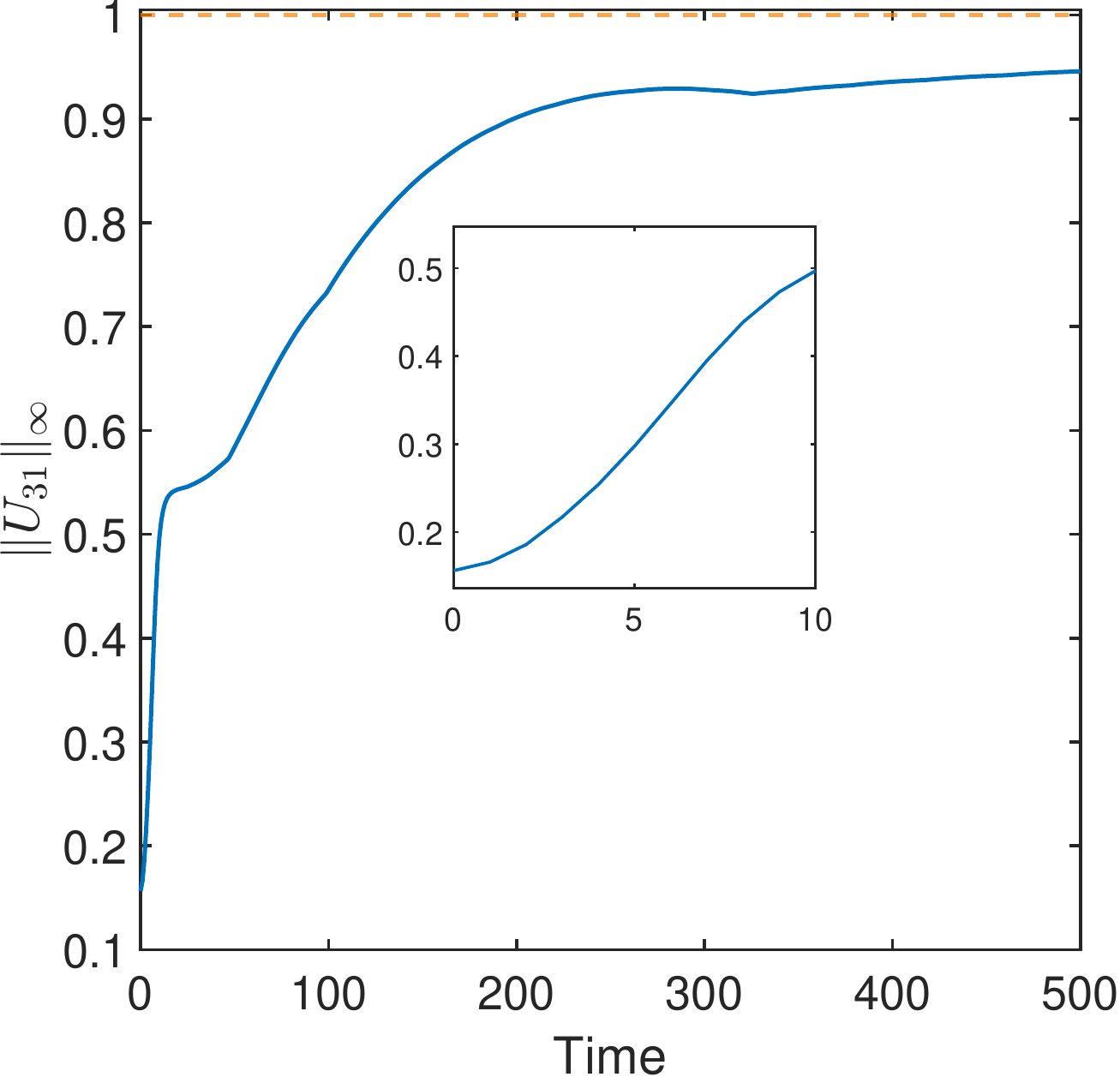}
\caption{Evolution of $\|U_{31}\|_\infty$ over time for the $3\times2$ matrix-valued case.}
\label{fig:4.35}
\end{figure}

Figure \ref{fig:4.35} plots the time evolution of $\|U_{31}\|_\infty$. The quantity increases rapidly at the beginning and then grows more slowly, reaching approximately $0.94577$ at $T=500$ while remaining below $1$. For $U=[u_1,u_2]\in\mathbb{R}^{3\times2}$, when $U^\top U = I_2$ and $U_{32}=0$, we have
\begin{equation*}
c = (u_1 \times u_2) \cdot e_3 = s \sqrt{1 - U_{31}^2},\qquad s\in\{+1,-1\}.
\end{equation*}
Therefore, a larger $|U_{31}|$ leads to a smaller $|c|$. In the $2\times2$ case, $U_{31}=0$, and under this condition, $c$ rapidly approaches $\pm1$ within each region. Conversely, in the $3\times2$ simulation, the growth of $U_{31}$ weakens this tendency, so the values of $c$ in certain regions are not forced to remain close to $\pm1$. This leads to a distinct evolution of the interface.

\subsection{Three-dimensional matrix-valued case}
We consider a fully three-dimensional matrix-valued problem with $(m_1,m_2)=(3,3)$ on $\Omega=[-\frac12,\frac12]^3$ and periodic boundary conditions. We take $\varepsilon=0.01$ and $\kappa=10$. Let $A=\{(x,y,z)\in\Omega:\ |x|+|y|+|z|\leq 0.34\}$. The initial value is given by
\begin{equation}\label{eq:4.5}
\begin{aligned}
U_0(x,y,z)=
\begin{cases}
\mathcal{P}\begin{bmatrix}
\frac{1}{2}\cos\alpha&\frac{\sqrt{6}}{2}\cos\alpha&-\frac{\sqrt{2}}{2}\sin\alpha\\
\frac{1}{2}\sin\alpha&\frac{\sqrt{6}}{2}\sin\alpha&\frac{\sqrt{2}}{2}\cos\alpha\\
\frac{\sqrt{3}}{2}&-\frac{\sqrt{2}}{2}&0
\end{bmatrix}\quad \mbox{if}~$(x,y,z)$~\mbox{satisfy condition A},\\
\\
\mathcal{P}\begin{bmatrix}
-\frac{1}{2}\cos\alpha&\frac{\sqrt{6}}{2}\cos\alpha&\frac{\sqrt{2}}{2}\sin\alpha\\
-\frac{1}{2}\sin\alpha&\frac{\sqrt{6}}{2}\sin\alpha&-\frac{\sqrt{2}}{2}\cos\alpha\\
\frac{\sqrt{3}}{2}&\frac{\sqrt{2}}{2}&0
\end{bmatrix}\quad\text{otherwise},\\
\end{cases}
\end{aligned}
\end{equation}
where $\alpha(x,y,z)=4\pi xyz$ and $\mathcal P$ denotes the pointwise projection onto the Frobenius ball $\{U\in\mathbb R^{3\times3}:\|U\|_\tF\leq\sqrt3\}$. This construction gives a double-pyramid interface at $t=0$ and satisfies $\|U_0(x,y,z)\|_\tF\leq\sqrt3$ at every grid point.

\subsubsection{Time-step robustness of the MBP and energy.}
We first test the three-dimensional structure-preserving behavior of the rescaled ETDRK$r$ schemes with $r=3,4,5$. The spatial domain is discretized by a $32\times32\times32$ grid, and the time steps are chosen as $\tau=0.25,0.5,1,2$. Figure \ref{fig:4.41} shows the evolution of the supremum norm and the discrete energy. For all three orders and all tested time steps, the numerical solution satisfies $\|U^n\|_\mX\leq\sqrt3$, and the discrete energy decreases monotonically over the whole time interval. These results indicate that the proposed rescaled ETDRK schemes retain their structure-preserving behavior in the three-dimensional setting, even for relatively large time steps.

\begin{figure}[!htpb]
\centering
\includegraphics[width=0.96\textwidth]{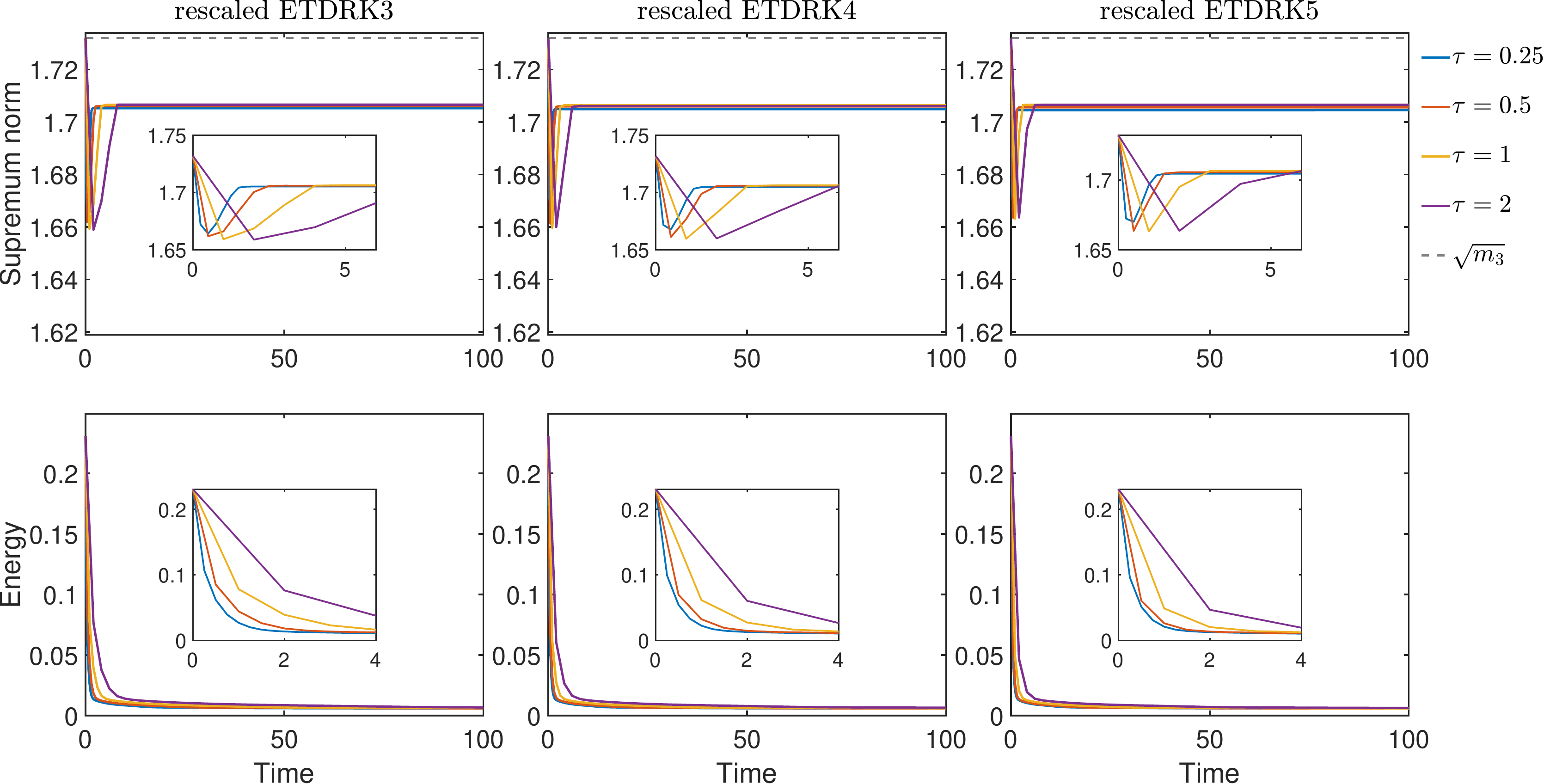}
\caption{Evolution of the supremum norm $\|U\|_{\mathcal X}$ (upper row) and the discrete energy (lower row) for the three-dimensional matrix-valued problem with initial condition \eqref{eq:4.5}. The three columns correspond to the rescaled ETDRK$r$ schemes with $r=3,4,5$.}
\label{fig:4.41}
\end{figure}

\subsubsection{Three-dimensional interfacial evolution.}
We next illustrate the three-dimensional interfacial dynamics by using the rescaled ETDRK$5$ scheme with $\tau=2$ up to $T=500$. The snapshots are recorded at $t=0,150,300,500$. Figure \ref{fig:4.42} displays the zero-level surface $\det(U)=0$ from both an oblique three-dimensional view and a top view, together with the first-column vector field of $U$. The initial zero-level surface has a double-pyramid shape. During the evolution, the sharp edges and vertices are quickly smoothed by the diffuse-interface dynamics. Then, the surface becomes progressively rounder and shrinks in a stable manner. The vector field also becomes more coherent. No visible oscillations or artificial irregularities appear in the interface, which further supports the robustness of the proposed method for three-dimensional matrix-valued simulations.

\begin{figure}[!htpb]
\centering
\subfigure[Three-dimensional view]{\includegraphics[width=0.98\textwidth]{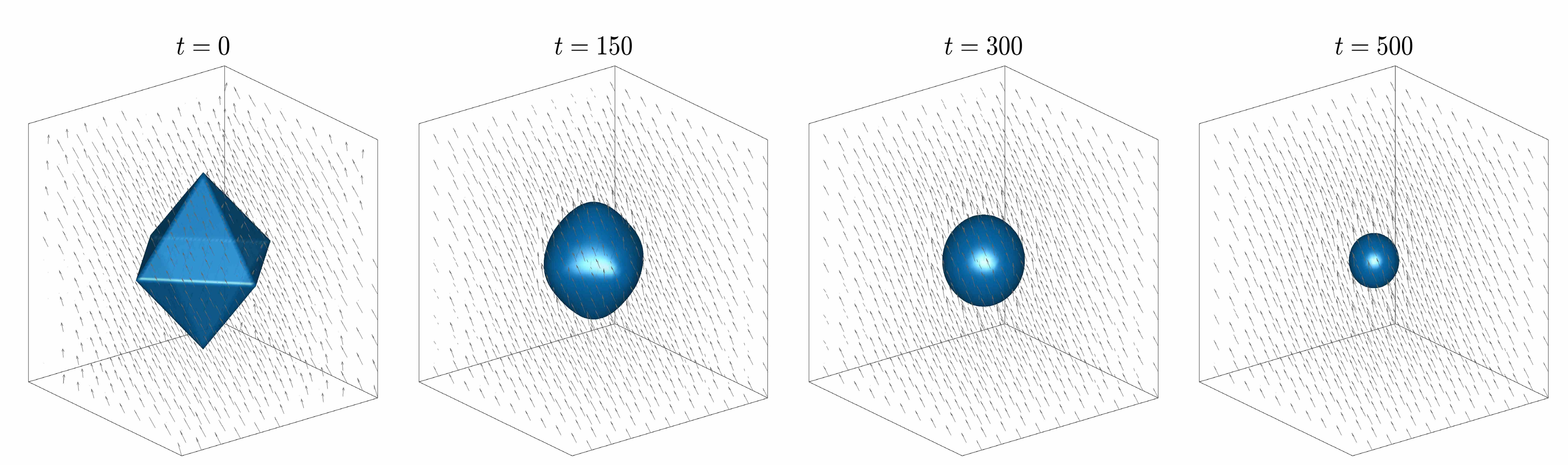}}\\[-1mm]
\subfigure[Top view]{\includegraphics[width=0.98\textwidth]{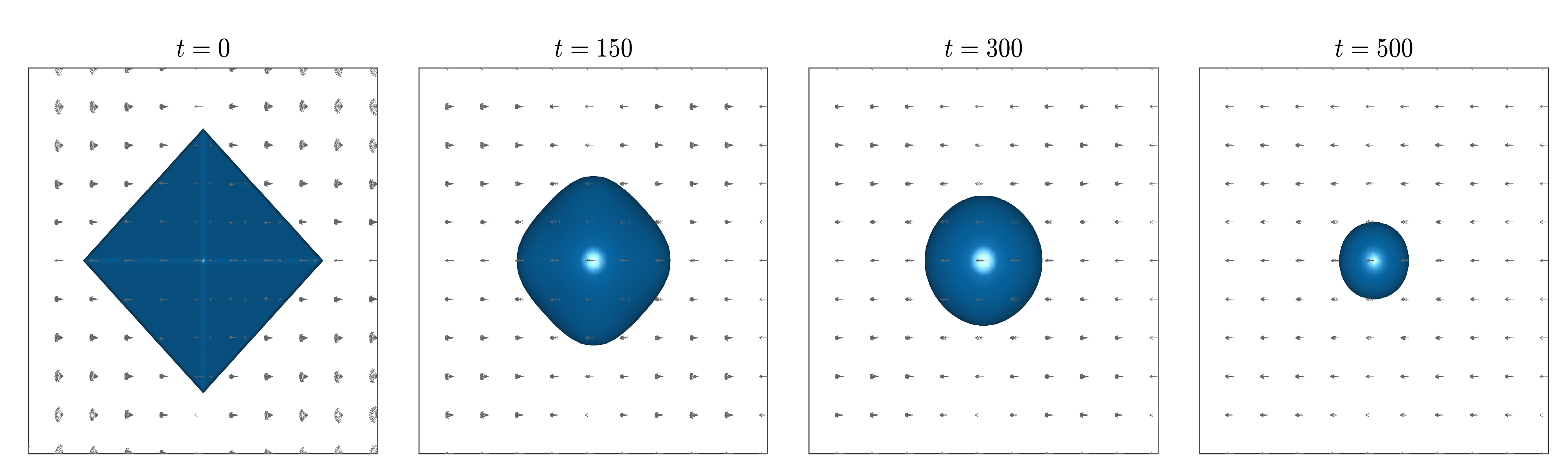}}
\caption{Three-dimensional evolution of the zero-level surface $\det(U)=0$ for the matrix-valued Allen--Cahn equation.}
\label{fig:4.42}
\end{figure}

\section{Conclusion}\label{sec5}
This paper introduces a generalized matrix-valued Allen--Cahn model with the unknown matrix-valued function $U\in\mathbb{R}^{m_1\times m_2}$ and $m_1\geq m_2$. By taking different values of $m_1$ and $m_2$, this model unifies three classical cases: the scalar-valued, vector-valued, and square-matrix-valued Allen--Cahn equations. At the continuous level, we establish the existence and uniqueness of the solution for the model and prove that the solution satisfies the maximum principle and energy dissipation law. At the discrete level, we construct arbitrarily high-order ETDRK schemes. Theoretical analysis and numerical experiments show that the schemes unconditionally preserve the maximum principle. Moreover, the first- and second-order ETDRK$r$ schemes unconditionally satisfy the original energy dissipation law, while third- and higher-order schemes satisfy the property under suitable time-step constraints. 

Although the present work focuses on the orthogonality-promoting potential, the proposed strategy may be extended to a broader class of smooth gradient-flow potentials \cite{kim2025maximum}. A direct extension of the current analysis would require suitable conditions ensuring the invariance of the admissible Frobenius ball, the boundedness and local Lipschitz continuity of the modified nonlinearity, and an appropriate estimate for the energy density. These conditions are sufficient for the present proof and are not claimed to be necessary or optimal. Identifying weaker assumptions and treating more general potentials systematically are left for future work.

Moreover, this generalized matrix-valued Allen--Cahn model demonstrates application potential in the fields of crystallography and materials science, particularly for simulating grain boundary dynamics, crystal growth, and phase transformation processes. Its matrix-valued characteristics enable the capture of complex phenomena such as grain orientation evolution. Future research may explore extending its application to simulate microstructural evolution in crystalline materials and liquid crystal systems, thereby enhancing our understanding of orientation-dependent behavior.

\section*{Acknowledgement}
C. Quan was partially supported by the National Natural Science Foundation of China (Grant No. 12271241), Guangdong Basic and Applied Basic Research Foundation (Grant No. 2023B1515020030), Shenzhen Science and Technology Innovation Program (Grant No. JCYJ20230807092402004), and Hetao Shenzhen-Hong Kong Science and Technology Innovation Cooperation Zone Project (No.HZQSWS-KCCYB-2024016). D. Wang was partially supported by National Natural Science Foundation of China (Grant No. 12422116), Guangdong Basic and Applied Basic Research Foundation (Grant No. 2023A1515012199), Shenzhen Science and Technology Innovation Program (Grant No. JCYJ20220530143803007, RCYX20221008092843046), Hetao Shenzhen-Hong Kong Science and Technology Innovation Cooperation Zone Project (No.HZQSWS-KCCYB-2024016), and the Shenzhen Loop Area Institute (Grant No. FPF10120250014 and Contract No. SLAI2026020007).

\bibliographystyle{amsplain}
\bibliography{refs}

@article{brutman1997lebesgue,
  author  = {Brutman, L.},
  title   = {Lebesgue Functions for Polynomial Interpolation: A Survey},
  journal = { Ann. Numer. Math.},
  volume  = {4},
  pages   = {111--127},
  year    = {1997}
}

@book{trefethen2019approximation,
  author    = {Trefethen, L. N.},
  title     = {Approximation Theory and Approximation Practice, Extended Edition},
  publisher = {SIAM},
  year      = {2019}
}

@article{kim2025maximum,
  title={Maximum principle preserving the unconditionally stable method for the {Allen--Cahn} equation with a high-order potential},
  author={Kim, J.},
  journal={Electron. Res. Arch.},
  volume={33},
  number={1},
  pages={433--446},
  year={2025}
}

@article{grasselli2024multi,
  title={Multi-component conserved {Allen--Cahn} equations},
  author={Grasselli, M. and Poiatti, A.},
  journal={Interfaces Free Bound.},
  volume={26},
  number={4},
  pages={489--541},
  year={2024}
}

@article{staublin2022phase,
  title={Phase-field model for anisotropic grain growth},
  author={Staublin, P.  and Mukherjee, A. and Warren, J.A. and Voorhees, P.W.},
  journal={Acta Mater.},
  volume={237},
  pages={118169},
  year={2022}
}

@article{wang2019interface,
  title={Interface dynamics for an {Allen--Cahn-type} equation governing a matrix-valued field},
  author={Wang, D. and Osting, B. and Wang, X.-P.},
  journal={Multiscale Model. Simul.},
  volume={17},
  number={4},
  pages={1252--1273},
  year={2019}
}

@article{osting2020diffusion,
  author  = {Osting, B. and Wang, D.},
  title   = {A diffusion generated method for orthogonal matrix-valued fields},
  journal = {Math. Comput.}, 
  volume  = {89 (322)},
  year    = {2020},
  pages   = {515-550}
}

@article{fei2023matrix,
  title={Matrix-valued {Allen--Cahn equation and the Keller--Rubinstein--Sternberg} problem},
  author={Fei, M. and Lin, F. and Wang, W. and Zhang, Z.},
  journal={Invent. math.},
  pages={1--80},
  year={2023}
}

@article{sun2023maximum,
  title={Maximum bound principle for matrix-valued {Allen-Cahn} equation and integrating factor {Runge-Kutta} method},
  author={Sun, Y. and Zhou, Q.},
  journal={Numer. Algorithms},
  pages={1--39},
  year={2023},
  publisher={Springer}
}

@article{du2021maximum,
  author  = {Du, Q. and Ju, L. and Li, X. and Qiao, Z.},
  title   = {Maximum bound principles for a class of semilinear parabolic equations and exponential time differencing schemes},
  journal = {SIAM Rev.}, 
  volume  = {63(2)},
  year    = {2021},
  pages   = {317-359}
}

@article{li2022stability2,
  title={Stability and convergence of {Strang splitting}. {Part II}: tensorial {Allen-Cahn} equations},
  author={Li, D. and Quan, C. and Xu, J.},
  journal={J. Comput. Phys.},
  volume={454},
  pages={110985},
  year={2022}
}

@article{liu2025maximum,
  title={On the maximum bound principle and energy dissipation of exponential time differencing methods for the matrix-valued {Allen--Cahn} equation},
  author={Liu, Y. and Quan, C. and Wang, D.},
  journal={IMA J. Numer. Anal.},
  volume={45},
  number={6},
  pages={3342--3377},
  year={2025}
}

@article{quan2026unconditional,
  title={Unconditional energy dissipation of Strang splitting for the matrix-valued {Allen--Cahn} equation},
  author={Quan, C. and Tang, T. and Wang, D.},
  journal={J. Differ. Equations},
  volume={453},
  pages={113825},
  year={2026}
}

@article{batard2014covariant,
  title={On covariant derivatives and their applications to image regularization},
  author={Batard, T. and Bertalm{\'\i}o, M.},
  journal={SIAM J. Imaging Sci.},
  volume={7},
  number={4},
  pages={2393--2422},
  year={2014}
}

@article{rosman2014augmented,
  title={{Augmented-Lagrangian} regularization of matrix-valued maps},
  author={Rosman, G. and Tai, X.-C. and Kimmel, R. and Bruckstein, A.M.},
  journal={Methods Appl. Anal.},
  volume={21},
  number={1},
  pages={105--122},
  year={2014}
}

@inproceedings{vaxman2016directional,
  title={Directional field synthesis, design, and processing},
  author={Vaxman, A. and Campen, M. and Diamanti, O. and Panozzo, D. and Bommes, D. and Hildebrandt, K. and Ben-Chen, M.},
  booktitle={Computer graphics forum},
  volume={35},
  number={2},
  pages={545--572},
  year={2016}
}

@article{elsey2013simple,
  title={A simple and efficient scheme for phase field crystal simulation},
  author={Elsey, M. and Wirth, B.},
  journal={ESAIM Math. Model. Numer. Anal.},
  volume={47},
  number={5},
  pages={1413--1432},
  year={2013}
}

@article{elsey2014fast,
  title={Fast automated detection of crystal distortion and crystal defects in polycrystal images},
  author={Elsey, M. and Wirth, B.},
  journal={Multiscale Model. Simul.},
  volume={12},
  number={1},
  pages={1--24},
  year={2014}
}

@article{allen1979microscopic,
  title={A microscopic theory for antiphase boundary motion and its application to antiphase domain coarsening},
  author={Allen, S.M. and Cahn, J.W.},
  journal={Acta Metall.},
  volume={27},
  number={6},
  pages={1085--1095},
  year={1979}
}

@article{yang2006numerical,
  title={Numerical simulations of jet pinching-off and drop formation using an energetic variational phase-field method},
  author={Yang, X. and Feng, J.J. and Liu, C. and Shen, J.},
  journal={J. Comput. Phys.},
  volume={218},
  number={1},
  pages={417--428},
  year={2006}
}

@article{gal2010longtime,
  title={Longtime behavior for a model of homogeneous incompressible two-phase flows},
  author={Gal, C.G. and Grasselli, M.},
  journal={Discrete Contin. Dyn. Syst.},
  volume={28},
  number={1},
  pages={1--39},
  year={2010}
}

@article{Liu2025phase,
  title={Phase transition of parabolic {Ginzburg–Landau} equation with potentials of high‐dimensional wells},
  author={Liu, Y.},
  journal={Commun. Pure Appl. Math.},
  volume={78},
  number={6},
  pages={1199-1247},
  year={2025}
}

@article{lin2012reaction,
  title={Phase transition for potentials of high-dimensional wells},
  author={Lin, F. and Pan, X.‐B. and Wang, C. },
  journal={Commun. Pure Appl. Math.},
  volume={65},
  number={6},
  pages={833--888},
  year={2012}
}

@article{rubinstein1989reaction,
  title={Reaction-diffusion processes and evolution to harmonic maps},
  author={Rubinstein, J. and Sternberg, P. and Keller, J.B.},
  journal={SIAM J. Appl. Math.},
  volume={49},
  number={6},
  pages={1722--1733},
  year={1989}
}

@article{rubinstein1989fast,
  title={Fast reaction, slow diffusion, and curve shortening},
  author={Rubinstein, J. and Sternberg, P. and Keller, J.B.},
  journal={SIAM J. Appl. Math.},
  volume={49},
  number={1},
  pages={116--133},
  year={1989}
}

@article{li2021unconditionally,
  title={Unconditionally maximum bound principle preserving linear schemes for the conservative {Allen--Cahn} equation with nonlocal constraint},
  author={Li, J. and Ju, L. and Cai, Y. and Feng, X.},
  journal={J. Sci. Comput.},
  volume={87},
  pages={1--32},
  year={2021}
}

@article{fu2022energy,
  author  = {Fu, Z. and Yang, J.},
  title   = {Energy-decreasing exponential time differencing {Runge–Kutta} methods for phase-field models},
  journal = {J. Comput. Phys.}, 
  volume  = {454},
  year    = {2022},
  pages   = {110943}
}

@article{quan2025maximum,
    title = {Maximum bound principle and original energy dissipation of arbitrarily high-order {ETD Runge–Kutta} schemes for {Allen–Cahn} equations},
    author = {Quan, C. and Wang, X. and Zheng, P. and Zhou, Z.},
    journal = {IMA J. Numer. Anal.},
    pages = {draf069},
    year = {2025}
}

@article{du1992analysis,
  title={Analysis and approximation of the {Ginzburg--Landau} model of superconductivity},
  author={Du, Q. and Gunzburger, M. and Peterson, J.},
  journal={SIAM Rev.},
  volume={34},
  number={1},
  pages={54--81},
  year={1992}
}
\end{document}